\newcommand{\cal}[1]{\mathcal{#1}}
\theoremstyle{plain}
\newtheorem{theo}{Theorem}
\newtheorem{lemma}{Lemma}[section]
\newtheorem{theorem}[lemma]{Theorem}  
\newtheorem{proposition}[lemma]{Proposition}
\newtheorem{corollary}[lemma]{Corollary}
\theoremstyle{definition}
\newtheorem{remark}[lemma]{Remark}
\let\egthree=\phi
\let\phi=\varphi
\let\varphi=\egthree
\begin{document}
\title{Spotted disk and sphere graphs II}
\author{Ursula Hamenst\"adt}
\thanks{Partially supported by ERC Grant ``Moduli'' and the Hausdorff Center Bonn\\
AMS subject classification: 57M99}
\date{July 22, 2023}


\begin{abstract}
The disk graph of 
a handlebody $H$ of genus $g\geq 2$
with $m\geq 0$ marked points on the 
boundary is the graph whose vertices are isotopy classes of disks
disjoint from the marked points 
and where two vertices are connected by 
an edge of length one if they can be realized disjointly.
We show that for $m=2$ the disk graph contains
quasi-isometrically embedded copies of 
$\mathbb{R}^2$. Furthermore, the 
sphere graph of the
doubled handlebody of genus $g\geq 4$ 
with two marked points contains for every
$n\geq 1$ a quasi-isometrically embedded copy of $\mathbb{R}^n$.
\end{abstract}

\maketitle


\section{Introduction}

The \emph{curve graph} ${\cal C\cal G}$
of an oriented surface $S$ of genus $g\geq 0$
with $m\geq 0$ punctures and
$3g-3+m\geq 2$ is the graph whose vertices 
are isotopy classes of 
essential (that is, non-contractible and not homotopic 
into a puncture) simple
closed curves on $S$. Two such 
curves are connected by an edge of length one if and only
if they can be realized disjointly. 
The curve graph 
is a locally infinite hyperbolic geodesic metric space of 
infinite diameter \cite{MM99}.

A handlebody of genus $g\geq 1$  
is a compact three-dimensional manifold $H$  which can
be realized as a closed regular neighborhood in $\mathbb{R}^3$
of an embedded bouquet of $g$ circles. Its boundary
$\partial H$ is an oriented surface of genus $g$. We allow
that $\partial H$ is equipped with $m\geq 0$ marked points
(punctures) which we call \emph{spots} in the sequel.
The group ${\rm Map}(H)$ 
of all isotopy classes of orientation preserving
homeomorphisms of $H$ which fix each of the spots
is called the \emph{handlebody group} of $H$. 
The restriction of an element of ${\rm Map}(H)$ to the 
boundary $\partial H$ defines an embedding of 
${\rm Map}(H)$ into the mapping class group of $\partial H$, viewed as
a surface with punctures \cite{S77,Wa98}. 

An \emph{essential disk} in $H$ is a properly embedded
disk $(D,\partial D)\subset (H,\partial H)$ whose
boundary $\partial D$ 
is an essential simple closed curve in $\partial H$, viewed as a surface with 
punctures. An isotopy of 
such a disk is supposed to consist of such disks.

The \emph{disk graph} ${\cal D\cal G}$
of $H$ is the graph whose vertices
are isotopy classes of essential disks in $H$. 
Two such disks are connected by an edge 
of length one if and only if
they can be realized disjointly.
Thus by identifying a disk with its boundary circle, 
the disk graph is a subgraph of the curve graph of 
$\partial H$ which is invariant under the handlebody
group ${\rm Map}(H)$. 

If $H$ does not have spots, then it is known that the 
disk graph is a quasi-convex subgraph of ${\cal C\cal G}$.
This means that for any two points $D,E\in {\cal D\cal G}$,
any geodesic in ${\cal C\cal G}$ connecting $\partial D$ to 
$\partial E$ is contained in a uniformly bounded neighborhood of 
${\cal D\cal G}$. However, the inclusion 
${\cal D\cal G}\to {\cal C\cal G}$ is \emph{not} a 
quasi-isometric embedding \cite{MS10}. More precisely, 
there are uniformly quasi-isometrically embedded 
subgraphs of ${\cal D\cal G}$, so-called \emph{holes}, whose diameter in 
${\cal D\cal G}$ 
is arbitrarily large but whose diameter in the curve graph is uniformly 
bounded.

A metric space $X$ is said to have \emph{asymptotic dimension}
${\rm asdim}(X)\leq n$ if for every $R>0$ there exists a
covering of $X$ by uniformly bounded subsets of $X$ so that
any ball of radius $R$ intersects at most $n+1$ sets from the covering. 
The asymptotic dimension of a curve graph is finite \cite{BF08}
(see also \cite{BB19} for a quantitative statement). 

In \cite{MS10,H19a,H16,H19b,H12} the following is shown.

\begin{theo}\label{disk}
\begin{enumerate}
\item 
The disk graph of a handlebody of genus $g\geq 2$ without spots is
hyperbolic and has finite asymptotic dimension. 
\item The disk graph of a handlebody of genus $g\geq 2$ 
with a single spot on the boundary contains quasi-isometrically embedded
$\mathbb{R}^2$. In particular, it is not hyperbolic.
\end{enumerate}
\end{theo}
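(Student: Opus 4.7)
The plan splits along the two parts of the theorem. For part (1), the strategy is surgery-theoretic. Given two essential disks $D, E$ in minimal position, an outermost arc of $\partial E$ on $D$ cuts off a subdisk that surgers $D$ into a disk $F$ with strictly fewer components of intersection with $E$. I would iterate this to build a \emph{surgery path} in ${\cal D\cal G}$ from $D$ to $E$ and verify Bowditch's guessing-geodesics criterion on the family of all such paths. Quasi-convexity of ${\cal D\cal G}$ in the curve graph of $\partial H$ is an input, and one shows that any two surgery paths between the same endpoints uniformly fellow-travel, which yields hyperbolicity. For finite asymptotic dimension, the approach is to establish a Masur--Minsky style distance formula expressing $d_{{\cal D\cal G}}(D,E)$ in terms of subsurface projections to a controlled family of subsurfaces of $\partial H$ (roughly, the ``disk-busting'' subsurfaces), and then to combine it with finite asymptotic dimension of the curve graph via the standard asdim extension theorem.

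For part (2), let $H$ be a genus $g\geq 2$ handlebody with a single spot $p\in\partial H$. The plan is to exhibit a rank-two free abelian subgroup of ${\rm Map}(H)$ whose orbit on a chosen disk realizes the quasi-isometric copy of $\mathbb{R}^2$. Choose two disjoint essential simple closed curves $\alpha_1,\alpha_2\subset\partial H\setminus\{p\}$ each bounding an essential disk in $H$, arranged so that $p$ lies on a fixed side of each $\alpha_i$ in $\partial H$. Let $T_i\in{\rm Map}(H)$ be the mapping class that point-pushes $p$ once around $\alpha_i$; concretely, $T_i$ is the product of Dehn twists about the two boundary components of a regular neighborhood of $\alpha_i$ in $\partial H\setminus\{p\}$, with opposite signs. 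Because $\alpha_i$ bounds a disk in $H$, each such twist extends to $H$, so $T_i\in{\rm Map}(H)$; disjointness of $\alpha_1,\alpha_2$ forces $T_1$ and $T_2$ to commute. Pick a disk $D_0$ disjoint from $\alpha_1\cup\alpha_2$ and form the orbit map $(m,n)\mapsto T_1^m T_2^n D_0$; this map is trivially Lipschitz into ${\cal D\cal G}$.

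The heart of the argument, and the main obstacle, is the lower bound: one must show that whenever $|m|+|n|$ is large, $T_1^m T_2^n D_0$ is far from $D_0$ in ${\cal D\cal G}$. I would control this through subsurface projections to the annular neighborhoods $A_i$ of $\alpha_i$ inside the punctured surface $\partial H\setminus\{p\}$. Each $T_i$ acts on the projection to $A_i$ by a translation of length roughly $2$, while commuting through the other $A_j$, so $d_{A_1}(D_0,T_1^m T_2^n D_0)\asymp |m|$ and symmetrically for $A_2$. Since $\alpha_1,\alpha_2$ bound disks, the subsurfaces $A_i$ are genuinely ``seen'' by the disk graph, and one combines the Masur--Minsky distance formula with the quasi-convex embedding ${\cal D\cal G}\hookrightarrow{\cal C\cal G}$ to promote these annular projection estimates to a lower bound $d_{{\cal D\cal G}}(D_0,T_1^m T_2^n D_0)\succeq |m|+|n|$. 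The delicate point is ruling out cancellation between the two projections and ensuring that passing from curve-graph to disk-graph distance does not collapse one of the two directions. This is exactly where the spot is essential: without it, each $T_i$ would reduce to a Dehn twist about a disk-bounding curve and would therefore act trivially on ${\cal D\cal G}$, so the quasi-flat could not exist.
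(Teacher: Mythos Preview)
First a framing remark: in this paper Theorem~\ref{disk} is quoted from the literature (\cite{MS10,H19a,H16,H19b,H12}) rather than proved, so the comparison is with those references. Your sketch for part~(1) is in the right spirit and close to what is actually done there.

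Your plan for part~(2), however, has a genuine gap. The sentence ``Since $\alpha_1,\alpha_2$ bound disks, the subsurfaces $A_i$ are genuinely `seen' by the disk graph'' is exactly backwards. In the Masur--Schleimer framework a subsurface $Y\subset \partial H$ is a \emph{hole} for the disk graph only if the boundary of \emph{every} essential disk has essential intersection with $Y$; only then is the subsurface projection a coarse Lipschitz map on ${\cal D\cal G}$ and only then can it produce lower bounds on $d_{{\cal D\cal G}}$. An annulus whose core is (freely homotopic to) a diskbounding curve is never a hole: plenty of disks miss it. Concretely, with your choices the disk $D_0$ itself is disjoint from $\alpha_1\cup\alpha_2$, and since $\partial H\setminus\partial D_0$ is connected (or at least connects $p$ to each $\alpha_i$ in the generic non-separating case) you can realize each point-pushing loop at $p$ in the free homotopy class of $\alpha_i$ disjointly from $\partial D_0$. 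Then the support of $T_i$ misses $\partial D_0$ and $T_i D_0=D_0$, so your orbit map is constant rather than a quasi-isometric embedding. The annular projections $d_{A_i}(D_0,T_1^mT_2^nD_0)$ are not even well defined here, let alone coarsely proportional to $|m|,|n|$.

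The mechanism that actually works (and is the one the paper alludes to just after the statement of Theorem~\ref{disk}) goes in the opposite direction: one uses a \emph{diskbusting} curve, namely an $I$-bundle generator $c\subset\partial H_0$, so that every disk meets $c$ and the relevant projections are defined on all of ${\cal D\cal G}$. The handlebody is written as an $I$-bundle over a compact surface $F$ with $\partial F=c$; the subgraph of disks meeting $c$ minimally is quasi-isometric to the arc graph of $F$ and is quasi-isometrically embedded in ${\cal D\cal G}$ (this is the content of Lemma~\ref{casewithoutspot} here and of \cite{MS10,H16}). Adding a single spot and point pushing it along $\partial F$ interacts with this embedded arc graph to produce the second independent direction; see \cite{H12}. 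If you want to repair your argument, replace the disk\-bounding curves $\alpha_i$ by an $I$-bundle generator and work with the associated arc graph rather than with annular projections to diskbounding curves.
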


The mechanism for part (2) of Theorem \ref{disk} consists in taking 
advantage of specific holes for the disk graph of a handlebody without
spots which arise from representing the
handlebody as a (possibly non-oriented) $I$-bundle over a compact 
surface $F$ with connected boundary together with a point pushing 
construction about the boundary circle of $F$. 

The first main goal of this work
is to extend the second part of Theorem \ref{disk} to 
handlebodies with two spots.

\begin{theo}\label{diskgraph}
The disk graph of a handlebody $H$ of genus $g\geq 2$ with 2 spots  
on the boundary contains 
quasi-isometrically embedded
$\mathbb{R}^2$. If $g$ is even, then it contains 
quasi-isometrically embedded $\mathbb{R}^3$.
In particular, it is not hyperbolic.
\end{theo}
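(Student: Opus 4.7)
The plan is to build on the construction that proves Theorem \ref{disk}(2). That construction fixes an $I$-bundle presentation $\pi\colon H \to F$ of the unspotted handlebody over a compact surface $F$ with connected boundary, places the spot on the distinguished boundary circle of $\partial H$ covering $\partial F$, and produces a quasi-isometric embedding $\mathbb{Z}^2 \hookrightarrow \mathcal{DG}$ from the combination of a point-pushing direction around the spot along an element of $\pi_1(F)$ and a second direction provided by a \emph{hole} of the disk graph associated to the $I$-bundle.

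For the first assertion of Theorem \ref{diskgraph}, I would transport this construction: choose the same $I$-bundle $\pi\colon H \to F$, place the first spot $p_1$ on $\partial F$ as before, and put the second spot $p_2$ at a generic location on $\partial H$ disjoint both from the hole subsurface and from the support of the point-pushing loop about $p_1$. The same orbit map $\mathbb{Z}^2 \to \mathcal{DG}$ is then defined on the two-spotted disk graph. The upper bound on distances is preserved automatically. The lower bound comes from the observation that the two subsurface projections used in \cite{H19a} --- namely the annular projection at $p_1$ and the projection to the hole --- are unaffected by the presence of the extra spot $p_2$, so a distance formula for $\mathcal{DG}$ again forces linear growth.

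For the second assertion, the assumption that $g$ is even lets me choose the $I$-bundle with $F$ orientable of genus $h = g/2$ and connected boundary, so $H = F \times I$. Now I place both spots on the boundary circle $\partial F$ and obtain two independent point-pushing actions $\phi_1, \phi_2$, one per spot, along loops $\gamma_1, \gamma_2 \in \pi_1(F)$ chosen with disjoint support and both disjoint from the hole. Together with the hole direction $\psi$, the orbit map $(i,j,k) \mapsto \phi_1^i \phi_2^j \psi^k D_0$ of a base disk $D_0$ is the candidate embedding $\mathbb{Z}^3 \hookrightarrow \mathcal{DG}$; the orientability of $F$ provides enough room on it to arrange the three supports pairwise disjoint, which is exactly what breaks down for odd genus.

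The key step, and the main obstacle I foresee, is the lower bound: showing that these three directions produce linearly growing disk graph distance, rather than being coarsely absorbed into one or two genuine directions. My plan is to exhibit three subsurfaces of $\partial H$ --- the two annuli around the spots and the hole --- that are pairwise disjoint, so that their subsurface projections contribute independently in the distance formula for $\mathcal{DG}$. Each projection grows linearly in its respective coordinate by a calculation analogous to the one-spot case. The nontrivial points to verify are that the point-pushing about $p_1$ does not influence the annular projection at $p_2$ (and vice versa), and that neither point-pushing interacts with the hole projection beyond uniformly bounded error; once these are in place the orbit map is a quasi-isometric embedding.
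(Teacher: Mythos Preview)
Your plan has a genuine gap in the lower bound. The ``annuli around the spots'' are not admissible subsurfaces for Masur--Minsky projection: a small circle about a marked point is peripheral in $\partial H$, so no subsurface projection to such an annulus is defined. Even if one tried to make sense of it, point-pushing $p_i$ along a loop $\gamma_i\subset F$ does not produce annular twisting about $p_i$; the diffeomorphism can be taken to be the identity on a fixed small disk neighbourhood of $p_i$ (apart from the motion of $p_i$ itself), so iterating $\phi_i$ does not move any curve's projection to that neighbourhood. Thus your three projections do not detect the three coordinates, and the lower bound fails. There is a related confusion in the setup: in the $I$-bundle picture the hole subsurface is (a component of) $\partial H\setminus c$, which is where $F$ sits, so loops $\gamma_i\in\pi_1(F)$ cannot be chosen ``disjoint from the hole''.

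The paper's argument is organised differently and, as the introduction notes, is \emph{not} an extension of the one-spot construction. For $g$ even one takes a \emph{separating} $I$-bundle generator $c$ and places one spot in each of the two components $F_0^+$ and $\Psi(F_0^+)$ of $\partial H\setminus c$. The three pairwise disjoint essential subsurfaces are $F_0^+$ (carrying $p_1$), $\Psi(F_0^+)$ (carrying $p_2$), and the annulus about $c$; the first two give \emph{two} hole directions (arc graphs of once-spotted surfaces), not one hole plus two annular directions. The key technical input is Lemma~\ref{casewithspot}, which builds coarse Lipschitz retractions $\Omega_1,\Omega_2:\mathcal{DG}\to\mathcal{RD}^+(c)\cong\mathcal{A}(F^+)$, one for each side; Corollary~\ref{pointpush} then shows that $d_{\mathcal{DG}}$ is comparable to the maximum of the two arc-graph distances, and the annular projection to $c$ supplies the independent third coordinate. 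For $g$ odd the same mechanism with a non-separating $I$-bundle generator yields $\mathbb{Z}^2$.
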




The proof of Theorem \ref{diskgraph} uses the presence of precisely two spots 
in an essential way, and it is not an extension of the proof of the second 
part of Theorem \ref{disk}.  

Theorem \ref{diskgraph} shows that  
disk graphs can not be used 
effectively to obtain a geometric understanding
of the handlebody group ${\rm Map}(H_0)$ of a handlebody
$H_0$ of genus
$g\geq 3$ with no spots 
paralleling the program developed by Masur and 
Minsky for the mapping class group \cite{MM00}.

The analogue of the strategy of Masur and Minsky 
would consist in taking advantage of 
hyperbolicity of the 
disk graph on which the handlebody group acts 
coarsely transitively. One then 
analyzes the point stabilizers for this action. 
That this is a valuable approach for the geometric study
of the handlebody group follows from the fact that
the stabilizer of a disk is an undistorted subgroup of the 
handlebody group \cite{He21}. 
But the second part of Theorem \ref{disk} and 
Theorem \ref{diskgraph} yield that the geometry of 
the stabilizer of a disk
can not be studied effectively by 
cutting a handlebody open along an embedded disk
which results in a (perhaps disconnected) handlebody
with two spots on the boundary and, in an inductive
procedure, using
knowledge of the  geometry of the disk
graph of the cut open handlebody. 
This failure of 
such an inductive approach may be a witness
for the fact that ${\rm Map}(H_0)$ is an exponentially distorted subgroup of 
the mapping class group of $\partial H_0$ \cite{HH12}, and
its Dehn function is exponential \cite{HH19}. 

Theorem \ref{diskgraph} has a stronger analogue for geometric graphs
related to the outer automorphism group ${\rm Out}(F_g)$ 
of the free group on $g$ generators. 
Namely, doubling the handlebody $H$ yields a connected sum
$M=\sharp_gS^2\times S^1$ of $g$ copies of $S^2\times S^1$
with $m$ marked points.
A doubled disk is an embedded essential sphere in $M$, that is, a 
sphere which is not homotopically trivial or homotopic into a marked point. 
The \emph{sphere graph} of $M$ is the graph whose
vertices are isotopy classes of 
essential spheres in $M$ and where two such
spheres are connected by an edge of length one if and 
only if they can be realized disjointly. 
As before, an isotopy of spheres is required to be
disjoint from the marked points.
The sphere graph of a doubled handlebody without
marked points is hyperbolic \cite{HM13}.
If $g$ is even, then the sphere graph of a doubled handlebody
with one marked point on the boundary contains
quasi-isometrically embedded $\mathbb{R}^2$ \cite{H12}. 


The following is the second main result of this article.

\begin{theo}\label{spheregraph}
The sphere graph of a doubled handlebody
of genus $g\geq 4$ with $2$ marked points
contains for every $n\geq 1$ a quasi-isometrically embedded
copy of $\mathbb{R}^n$. In particular, it is not hyperbolic, and 
its asymptotic dimension is infinite. 
\end{theo}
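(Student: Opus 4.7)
The plan is to produce, for each $n\ge 1$, a quasi-isometric embedding $\mathbb{Z}^n \hookrightarrow \cal{SG}(M)$ as the orbit of an $n$-tuple of pairwise commuting elements of the mapping class group of $M$ fixing the two marked points $p_1,p_2$, acting on a well-chosen basepoint sphere $S_0$. The two marked points are essential here: point-pushes of a single marked point live in $\pi_1(M\setminus\{p_2\})=F_g$, which is hyperbolic and contains no $\mathbb{Z}^2$; Dehn twists about pairwise disjoint spheres are bounded in number by the sphere complexity of $M$ (hence cannot yield $\mathbb{R}^n$ for $n>g$); but with two marked points one has access to the braiding group $\pi_1(\mathrm{Conf}_2 M)$, which in a $3$-manifold admits arbitrarily many commuting directions because two arcs from $p_1$ to $p_2$ can always be isotoped to be disjoint away from their endpoints.

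Concretely, I would first choose $n$ pairwise non-homotopic arcs $\alpha_1,\dots,\alpha_n$ from $p_1$ to $p_2$ in $M$, realized pairwise disjointly away from the endpoints, together with a reference arc $\alpha_0$ disjoint from the $\alpha_i$ off their endpoints. The point-push $T_i$ of $p_1$ along the loop $\alpha_i\cdot\alpha_0^{-1}$ can be supported in a regular neighborhood of $\alpha_i\cup\alpha_0$, and these supports can be arranged so that the $T_i$ pairwise commute up to isotopy. I would then take $S_0$ to be an essential sphere with controlled intersection pattern against all the $\alpha_i$ — for instance the boundary of a regular neighborhood of a sphere system dual to the arc configuration — and show that each $T_i$ moves $S_0$ in a distinct direction in $\cal{SG}(M)$.

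The main work is to verify that the orbit map $(k_1,\dots,k_n) \mapsto T_1^{k_1}\cdots T_n^{k_n}(S_0)$ is a quasi-isometric embedding. The upper bound on distances is standard once each $T_i$ is seen to be Lipschitz on $\cal{SG}(M)$; the lower bound requires recovering the twist exponents $k_i$ up to bounded error from the image sphere. The natural tool is to construct, for each arc $\alpha_i$, an auxiliary hyperbolic projection complex $\cal{X}_i$ (a \emph{hole} in the sense of \cite{MS10}) on which $T_i$ acts with positive translation length while the other $T_j$ act with bounded orbits, and to use the resulting distance estimate $d_{\cal{X}_i}(S_0,T_1^{k_1}\cdots T_n^{k_n}(S_0)) \asymp |k_i|$ to obtain the lower bound in $\cal{SG}(M)$. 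The hardest step will be establishing this mutual independence of the projections: it is the direct analogue of the two-spot mechanism behind Theorem \ref{diskgraph}, reinterpreted in the sphere graph, and will require a careful case analysis of how a general essential sphere in $M$ can intersect the arc system $\alpha_0\cup\dots\cup\alpha_n$, together with a no-backtracking argument bounding each $\cal{X}_i$-projection distance below by a uniform multiple of $|k_i|$.
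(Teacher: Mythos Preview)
Your proposal has a genuine gap at the very first step: the maps $T_i$ you describe do \emph{not} commute. Each $T_i$ is a point-push of the single marked point $p_1$ along a based loop in $M\setminus\{p_2\}$, so by the Birman exact sequence the $T_i$ live in the subgroup $\pi_1(M\setminus\{p_2\},p_1)\cong F_g$ of the mapping class group. As you yourself note in your opening paragraph, this group is free and contains no $\mathbb{Z}^2$. The supports of the $T_i$ cannot be made disjoint: every loop $\alpha_i\cdot\alpha_0^{-1}$ passes through the marked point $p_1$ and through the reference arc $\alpha_0$, so the regular neighbourhoods all overlap there. The fact that arcs in a $3$-manifold can be isotoped off one another away from their endpoints is irrelevant; what matters is the homotopy class of the push loop, and distinct primitive elements of a free group never commute. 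For the same reason, combining point-pushes of $p_1$ with point-pushes of $p_2$ yields at best a $\mathbb{Z}^2$, not $\mathbb{Z}^n$ for arbitrary $n$.

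The paper's argument avoids commuting mapping classes entirely. The map $\Lambda:\mathbb{Z}^n\to{\cal S\cal G}$ is the orbit of a base sphere under point-pushing of $p_2$ along words $b_1^{k_1}\cdots b_n^{k_n}$, where the $b_t$ are specific elements of $\pi_1(F,p_2)$ (borrowed from \cite{SS14}) that certainly do not commute. The upper distance bound comes instead from a symmetry peculiar to the sphere graph: the $I$-bundle involution $\Psi$ acts trivially on $\pi_1(H)$, so point-pushing $p_2$ along $\gamma$ and point-pushing $p_1$ along $\Psi(\gamma)$ produce the \emph{same} sphere (Lemma~\ref{projectrd}). This yields the crucial estimate~(\ref{crucial2}), which says that \emph{pre}composing the push word by a fixed element changes sphere-graph distance by at most an additive constant --- whereas postcomposition is an exact isometry. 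With both left and right multiplication under control, one can walk from $b_1^{k_1}\cdots b_n^{k_n}$ to $b_1^{\ell_1}\cdots b_n^{\ell_n}$ one factor at a time. The lower bound is not via holes or subsurface projections at all, but via the Whitehead-graph machinery of \cite{SS14}: Lemma~\ref{simplelength} shows that sphere-graph distance dominates the ``simple $(g{+}1)$-length'' of the relating word, and \cite{SS14} bounds this below by $\sum_i|k_i-\ell_i|$ for the particular words $b_t$.
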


The sphere graph of a doubled handlebody of genus $g\geq 2$ with
two marked points is isomorphic to the subgraph of the
sphere graph of a doubled handlebody $\sharp_{g+1}S^1\times S^2$ 
of genus $g+1$ with no marked point consisting
of all spheres which are disjoint
from a fixed non-separating sphere (see Section \ref{freesplittings}
for details). As in the case of disks in a handlebody and the
handlebody group, for $g\geq 3$ the stabilizer of 
a sphere in $\sharp_g S^1\times S^2$ is an undistorted subgroup of 
${\rm Out}(F_g)$ \cite{HM10} and thus Theorem \ref{spheregraph}
may among others witness the fact that the Dehn function of
${\rm Out}(F_g)$ is exponential \cite{BV12}. 
Note that unlike for the disk graph,
it seems to be unknown whether or not for $h\geq 3$ 
the sphere graph of $\sharp_h S^1\times S^2$
has finite asymptotic dimension.

The first example known to us of a 
geometric graph
of infinite asymptotic dimension is due to Sabalka and Savchuk
\cite{SS14}.  The vertices of this graph are 
isotopy classes of 
essential separating 
spheres in $\sharp_gS^2\times S^1$.
Two such spheres are connected 
by an edge of length one if and only if they can be
realized disjointly. We use the main construction of \cite{SS14} for the proof 
of Theorem \ref{spheregraph}. 

Note that 
Theorem \ref{diskgraph} and Theorem \ref{spheregraph} do not exclude the possibility
that the graph of non-separating disks or non-separating spheres in a
handlebody with two spots or a doubled handlebody with two spots is hyperbolic.

The article is subdivided into four sections.
In Section \ref{disksenclosing}, we introduce disks in a handlebody of
genus $g\geq 2$ enclosing the two spots.
We show how one can pass from a disk $D$ enclosing the two spots to another
disk $E$ enclosing the two spots in two explicit but different ways with 
a point pushing diffeomorphism. Namely, we can use
point pushing of either of the two spots, resulting in different elements of the handlebody group. 
The effect on disks of such point pushing transformations 
can be controlled provided that they 
are supported in disjoint subsurfaces of the boundary of the handlebody. 
As a byproduct of this analysis, we obtain the following statement of independent interest.

\begin{theo}\label{main3}
Let $\Sigma$ be a compact oriented surface with nonempty boundary $\partial \Sigma$, possibly 
with a finite number of interior points (punctures) removed. 
Let $p_1\in \partial \Sigma$ be a fixed point and let $p_2\in \Sigma$ be a marked interior point. 
Then point pushing the marked point $p_2$ determines a bijection between $\pi_1(\Sigma,p_2)$ and the set 
${\cal A}(p_1,p_2)$ of 
isotopy classes of arcs in $\Sigma$ connecting $p_1$ and $p_2$. 
\end{theo}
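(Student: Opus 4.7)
The plan is to factor the point-pushing map as a composition of two more elementary bijections
\[
\pi_1(\Sigma,p_2) \;\xrightarrow{\sim}\; \{\text{homotopy classes rel endpoints of paths $p_1\to p_2$}\} \;\xrightarrow{\sim}\; \mathcal{A}(p_1,p_2).
\]
Fixing a reference arc $\alpha_0\in\mathcal{A}(p_1,p_2)$ turns the middle set into a principal $\pi_1(\Sigma,p_2)$-torsor via the right action $\gamma\cdot[\alpha]=[\alpha\cdot\gamma]$, so the assignment $\gamma\mapsto[\alpha_0\cdot\gamma]$ is tautologically a bijection onto homotopy classes of paths.

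Next I would match this torsor bijection with the point-pushing map $\Phi(\gamma)=[\phi_\gamma(\alpha_0)]$, where $\phi_\gamma$ is a compactly supported diffeomorphism of $\Sigma$ representing the image of $\gamma$ under the Birman exact sequence. Choosing a smooth representative of $\gamma$ that stays in the interior of $\Sigma$ and away from the punctures, one may assume $\phi_\gamma$ is supported in a tubular neighborhood of that representative; in particular $\phi_\gamma$ fixes $p_1$. A direct local computation near the endpoint $p_2$ of $\alpha_0$ then shows $\phi_\gamma(\alpha_0)\simeq \alpha_0\cdot\gamma$ rel endpoints, so $\Phi$ realizes the torsor bijection at the level of homotopy classes.

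It remains to establish the second arrow above, namely that isotopy classes of arcs biject with homotopy classes rel endpoints of paths $p_1\to p_2$. Surjectivity is the easy half: any path in $\Sigma$ can be homotoped rel endpoints to a simple arc by iterated innermost-disk surgery on its self-intersections. Injectivity is a version of the classical Baer--Epstein theorem, asserting that two homotopic simple arcs with common endpoints are ambient isotopic. The proof minimizes transverse self-intersection of two representatives within their homotopy class and argues that, once minimal, the arcs are disjoint in the interior and hence cobound an embedded disk (because their concatenation $\alpha\cdot\beta^{-1}$ is null-homotopic), across which an ambient isotopy can be constructed.

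The main obstacle is the Baer--Epstein step, where one must verify that an innermost intersection really yields an embedded bigon and that bigon removal can be carried out by an ambient isotopy of $\Sigma$ that fixes $p_1$ and $p_2$. The cleanest way I see is to pass to the universal cover $\widetilde\Sigma$, which is homeomorphic to a closed half-plane since $\partial\Sigma\neq\emptyset$: the lifts of the two arcs starting at a chosen lift of $p_1$ end at the same lift of $p_2$, and the Sch\"onflies theorem in the half-plane supplies an ambient isotopy between them which, by equivariance, descends to an ambient isotopy in $\Sigma$. Punctures of $\Sigma$ introduce no new difficulty, since replacing them by small boundary circles leaves both $\pi_1(\Sigma,p_2)$ and $\mathcal{A}(p_1,p_2)$ unchanged.
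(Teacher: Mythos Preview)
Your approach is correct and takes a genuinely different route from the paper's. The paper (in the lemma immediately preceding this corollary) proceeds constructively: given two arcs $\alpha,\beta$, it decomposes the loop $\gamma=\alpha^{-1}\cdot\beta$ into a product $\gamma_1\cdots\gamma_m$ of \emph{simple} based loops, built combinatorially from the intersection pattern of $\alpha$ with $\beta$, and then tracks the arc through each simple point-push, verifying at each stage that it lands on an explicit intermediate arc $\zeta_\ell$ and finally on $\beta$. You instead separate the problem into (i) the effect of point pushing on \emph{homotopy} classes of paths, which is simply the torsor action $[\alpha_0]\mapsto[\alpha_0\cdot\gamma]$, and (ii) the Baer--Epstein theorem that homotopic embedded arcs are isotopic. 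Your route is cleaner conceptually and isolates exactly which classical fact is doing the work; the paper's route is more hands-on, and the simple-loop factorisation it produces is reused later in the article.

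Two places deserve tightening. First, your ``direct local computation'' of $\phi_\gamma(\alpha_0)\simeq\alpha_0\cdot\gamma$ is only local when $\gamma$ is embedded; for immersed $\gamma$ the support of $\phi_\gamma$ is not an annulus. The clean general argument is global: if $(F_t)_{t\in[0,1]}$ is the ambient isotopy with $F_0={\rm id}$, $F_1=\phi_\gamma$, $F_t(p_2)=\gamma(t)$ and $F_t(p_1)\equiv p_1$, then $s\mapsto F_s(\alpha_0)\cdot\gamma\vert_{[s,1]}$ is a homotopy rel endpoints from $\alpha_0\cdot\gamma$ to $\phi_\gamma(\alpha_0)$. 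Second, the universal cover of $\Sigma$ is a simply connected planar surface but in general has \emph{infinitely many} boundary lines (already for a pair of pants), not a single half-plane, and descent of a Sch\"onflies isotopy to $\Sigma$ is not automatic without an equivariance argument you have not supplied; it is safer to cite Epstein's theorem directly, or to run the bigon-removal argument in $\Sigma$ itself. Finally, your ``surjectivity'' step (every path homotopic to a simple arc via innermost-disk surgery) is both shaky as stated and unnecessary: once you know $\phi_\gamma(\alpha_0)\simeq\alpha_0\cdot\gamma$ rel endpoints, the arc $\phi_\gamma(\alpha_0)$ is already embedded, so for any $\beta\in{\cal A}(p_1,p_2)$ take $\gamma=[\alpha_0^{-1}\cdot\beta]$ and apply Baer--Epstein to the two embedded arcs $\phi_\gamma(\alpha_0)$ and $\beta$.
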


The boundary of a handlebody $H$ of even genus $2h$ contains preferred subsurfaces carrying
the entire topology of the handlebody. These subsurfaces are compact surfaces $F$ of genus $h$
with connected boundary so that $H$ is an orientable $I$-bundle over $F$
(that is, a fiber bundle over $F$ with fiber the interval $[0,1]$). The orientation reversing
involution $\Psi$ of $H$ which exchanges the endpoints of the fibers maps $F$ to a disjoint subsurface
of the boundary. If we choose one of the spots $p$ in the interior of $F$ and assume that the second spot 
equals $\Psi(p)$, then point pushing of $p$ along loops in $F$ commutes with point pushing of 
$\Psi(p)$ along loops in $\Psi(F)$. This is explained in detail in 
Section 3. A variation of this construction extends
to handlebodies of odd genus. 
We use this to prove Theorem \ref{diskgraph}.

In Section 4, we apply the discussion in Section \ref{ibundles}
and the main construction
of \cite{SS14} to the double of a handlebody with two spots and 
establish
Theorem \ref{spheregraph}. 

\bigskip
\noindent
{\bf Acknowledgement:} 
I am grateful
to Peter Teichner for pointing out that Theorem \ref{main3} may be of independent interest.

\section{Disks enclosing the spots}\label{disksenclosing}

The goal of 
this section is to introduce disks enclosing the two spots, establish some 
first properties of these disks and prove Theorem \ref{main3}. 
To simplify the terminology, when we talk about disks in the 
sequel, we always identify disks which are isotopic in the sense
explained in the introduction.

We begin with discussing briefly handlebodies of genus one.
A handlebody of genus one with at most one spot on the
boundary contains a single disk up to isotopy. 
This is used to establish

\begin{proposition}\label{twicepuncturedtorus}
The disk graph of a solid torus with two spots 
on the boundary is a tree.
\end{proposition}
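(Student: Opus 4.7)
Plan. The plan is to parameterize the isotopy classes of essential disks in the solid torus $V$ by $\mathbb{Z}$ and show that disjointness is exactly nearest-neighbor adjacency, so that $\mathcal{DG}$ is the bi-infinite path graph on $\mathbb{Z}$, which is a tree.

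Every essential disk in $V$ is a meridian disk, so its isotopy class is determined by its boundary, a simple closed meridian curve on $\partial V\setminus\{p_1,p_2\}$. I would classify these boundary curves by passing to the universal cover $\tilde V\cong D^2\times\mathbb{R}$ of $V$, whose boundary is the infinite cylinder $C\cong S^1\times\mathbb{R}$, on which the deck group $\mathbb{Z}$ acts by vertical translation. The two spots lift to two $\mathbb{Z}$-orbits sitting at two angles $\theta_1,\theta_2\in S^1$ and at vertical heights in $h_1+\mathbb{Z}$ and $h_2+\mathbb{Z}$ respectively. A simple closed meridian curve on the spotted torus lifts to a $\mathbb{Z}$-invariant family of disjoint embedded simple closed curves in $C$ avoiding the spot orbits; each is, up to isotopy, the graph of a continuous function $f\colon S^1\to\mathbb{R}$ with $f(\theta_i)\notin h_i+\mathbb{Z}$. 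The isotopy class of such a graph in $C$ minus the spot orbits is determined by the pair of integer floors $(\lfloor f(\theta_1)-h_1\rfloor,\lfloor f(\theta_2)-h_2\rfloor)\in\mathbb{Z}^2$, which records how many spots of each orbit lie strictly below the graph at its two constrained angles. Quotienting by the diagonal $\mathbb{Z}$-action of the deck group, isotopy classes of meridians on the spotted torus are in bijection with $\mathbb{Z}$ via the invariant $c(\gamma)=\lfloor f(\theta_1)-h_1\rfloor-\lfloor f(\theta_2)-h_2\rfloor$.

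With this classification in hand, two meridians $\gamma,\gamma'$ are disjoint on $\partial V$ iff some pair of lifts has disjoint graphs in $C$; equivalently, iff for some choice of lifts the continuous function $f_\gamma-f_{\gamma'}\colon S^1\to\mathbb{R}$ takes values in a single open unit interval $(k,k+1)$. Evaluating at $\theta_1,\theta_2$ and applying an elementary intermediate-value argument to the floor-constraints coming from the two $c$-invariants yields the sharp criterion $|c(\gamma)-c(\gamma')|\leq 1$, with sufficiency witnessed by explicit nearly-constant representatives. Hence $\mathcal{DG}$ has vertex set $\mathbb{Z}$ and edges exactly between consecutive integers: it is the bi-infinite path graph on $\mathbb{Z}$, which is a tree.

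I expect the main technical care to be in the classification step, in particular verifying that every $c\in\mathbb{Z}$ is realized by a genuine simple meridian curve on the spotted torus and that the deck-group ambiguity is correctly factored out; the disjointness analysis and the identification with the line graph are then routine bookkeeping.
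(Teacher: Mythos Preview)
Your opening claim---that every essential disk in the solid torus $V$ with two spots is a meridian disk---is false, and this is the gap. A simple closed curve on $\partial V$ bounds a disk in $V$ iff it has zero longitudinal winding; besides the meridians, this includes every curve that is essential on the twice-spotted torus but null-homotopic on the unspotted torus, namely any curve bounding a disk on $\partial V$ containing both spots. Such a curve bounds a properly embedded \emph{separating} essential disk in $V$ (push the spotted boundary disk slightly into the interior), and these are precisely the ``disks enclosing the two spots'' that the rest of the paper studies. Your cylinder model never sees them: their boundaries lift to null-homotopic curves in $S^1\times\mathbb R$, not to graphs of functions going once around.

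Your analysis of the non-separating disks is essentially correct and gives a pleasant, explicit alternative to the paper's invocation of Kent--Leininger--Schleimer: the meridian disks really are parameterized by $\mathbb Z$ with nearest-neighbour disjointness, so they form a bi-infinite path. But this is only a subgraph. The full disk graph has, attached to each vertex of that path, countably many separating disks as leaves: cutting $V$ along a separating disk $D$ yields a ball with two spots on one side (no essential disks with boundary on $\partial V$ there) and an unspotted solid torus on the other (exactly one meridian disk there), so $D$ has a unique neighbour; conversely, cutting along a meridian disk leaves a four-holed sphere on the boundary with a $\mathbb Z$-worth of curves separating the two spots from the two scars. The resulting graph is still a tree, but an infinite-valence one, not a line. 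To salvage your argument you would need to add exactly this analysis of the separating disks.
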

\begin{proof} Let $H$ be a solid torus with two 
spots $p_1,p_2$ on the boundary.
The handlebody $H_1$ obtained from $H$ by
removing the spot $p_2$ is a solid torus
with one spot on the boundary. Let
$\Phi_1:\partial H\to \partial H_1$ be the natural
spot removal map.  

The handlebody 
$H_1$ contains a single disk $D_1$, and this
disk is non-separating.  
If $D\subset H$ is any non-separating disk then 
$\Phi_1(\partial D)=\partial D_1$.  
Thus by Theorem 7.1 of  
\cite{KLS09}, 
the complete subgraph 
of the disk graph of $H$ whose vertex set is the
set of non-separating
disks in $H$ is a tree $T$. This is the Bass-Serre tree for the
splitting of $\pi_1(\partial H_1,p_2)=F_2$ defined by $D_1$. Equivalently,
it is the tree dual to the curve $\partial D_1$ and its images under the 
action of $\pi_1(\partial H_1,p_2)$.

If $D\subset H$ is a separating disk then $\partial D$ 
decomposes $\partial H$ into a disk with two spots and a torus
with the interior of a closed disk removed. In particular, 
$\Phi_1(\partial D)$ is peripheral. 
There is a single disk in $H$ which is disjoint from $D$,
and this disk is non-separating. Thus there is a single edge
in ${\cal D\cal G}$ with one endpoint at $D$. The second endpoint
is a vertex in the simplicial tree $T$.
 
As a consequence, the disk graph of $H$ is an extension
of the simplicial tree
$T$ which attaches to each vertex of $T$ 
an at most countable collection of edges 
whose second endpoints are univalent. Thus this graph is a tree as well.
The proposition follows.
\end{proof}

\begin{remark}\label{countable}
The disk graph of a solid torus $H$ with two spots on the boundary is 
a tree with countable valency. Namely, 
if $D$ is any non-separating disk in $H$ then cutting
$H$ open along $D$ yields a ball with four spots on the boundary.
Two of these spots are the two copies of $D$. Any simple closed
curve which separates these two distinguished spots from the 
remaining two spots is the boundary of 
a separating disk in $H$ disjoint from $D$, and any separating disk disjoint from $D$
arises in this way. There are countably many such disks. 
\end{remark}


\begin{remark}\label{connected}
It also follows from similar considerations that the disk graph of a handlebody
$H$ with two spots is connected (for which the usual surgery argument 
\cite{H19a} is problematic as surgery may lead to peripheral disks). 
Namely, denote by 
$H_1$ the handlebody obtained from $H$ by removing the spot $p_2$, 
and let $\Phi_1:H\to H_1$ be the spot forgetful map. 
By Theorem 7.1 of \cite{KLS09}, for any
non-separating disk $D\subset H$, 
the preimage of $\Phi_1(D)\subset H_1$ under the map 
$\Phi_1$ is the Bass Serre tree for the graph of groups decomposition 
of $\pi_1(\partial H_1,p_2)$ defined by $D$ and the point $p_2\in \partial H_1-\Phi_1(D)$. 
In particular, this preimage is connected. On the other hand, any disk in $H$
is disjoint from a disk which projects onto a non-separating disk in $H_1$. 
As the disk graph ${\cal D\cal G}_1$ 
of $H_1$ is easily seen to be connected and furthermore there
is a simplicial embedding $\Lambda: {\cal D\cal G}_1\to {\cal D\cal G}$ so that 
$\Phi_1\circ \Lambda={\rm Id}$ (see Section 7 of \cite{KLS09}), 
this yields that ${\cal D\cal G}$ is indeed connected.  
\end{remark}

In the sequel we always denote by $H$ a handlebody of genus $g\geq 2$ 
with two spots $p_1,p_2$ on the
boundary, and we denote by $H_0$ the handlebody obtained from $H$ by
removing the spots. There is a natural spot removal map 
\[\Phi:H\to H_0.\] 
We say that a disk $D$ \emph{encloses the spots} if $\Phi(D)\subset H_0$ is contractible.
Equivalently, the boundary of $D$ is a simple closed curve
in $\partial H$ which bounds a twice punctured disk $\tilde D\subset \partial H$, with punctures at
$p_1,p_2$.

Let $E\subset H$ be another disk which encloses the
two spots $p_1,p_2$. Assume that $E$ is 
in minimal position
with respect to $D$. This means in particular
that the boundaries $\partial D,\partial E$ 
intersect in the minimal number of points among all
representatives in their isotopy classes. 

The simple closed curves $\partial D,\partial E$  
are the boundaries of 
unique disks $\tilde D,\tilde E\subset \partial H$ 
containing the two spots (thus if we think of the spots as
missing points, then $\tilde D,\tilde E$ should be viewed as 
twice punctured disks). If $D$ is not isotopic to $E$ then 
the intersection $\tilde D\cap \tilde E$
consists of two disjoint disks $A_1,A_2$ 
with one spot at $p_1,p_2$, respectively,
and a disjoint union of rectangles.
In particular, $\partial D\cap \partial E$ consists
of at least four points.


Denote as before by $H_1$ the handlebody obtained from $H$ by removing the 
spot $p_2$ and let $\Phi_1:H\to H_1$ be the spot forgetful map. 
Use the ordered pair of disks $(\tilde D,\tilde E)$ to 
construct a loop $\gamma\subset 
(\partial H \cup\{p_2\},p_2)$ based at $p_2$ as follows. 
First, connect the point $p_1$ to the point $p_2$
by an oriented arc $\alpha$, that is, the image of the closed 
interval $[0,1]$ under a topological embedding,  
whose interior is embedded in $\tilde D$. 
The endpoints of $\alpha$
are the two spots of $H$, and they are precisely the spots of the
disk $\tilde E$. Thus there is an arc $\beta$ in 
$\tilde E$ connecting $p_1$ to $p_2$. The loop $\gamma$ is 
homotopic to the concatenation of $\alpha^{-1}$ with $\beta$
(which we move off the spot $p_1$ with a small deformation).
Note that
the inverse of the loop $\gamma$ is constructed with exactly the 
same procedure, but with the roles of the disks $D,E$ exchanged. 
Furthermore, the homotopy class of $\gamma$ 
as a loop in $\partial H_1$ based at $p_2$ 
is uniquely determined by
the ordered pair $(D,E)$ up to the  precomposition with the
homotopy class of a loop in $\tilde D$ based at $p_2$ which 
surrounds the marked point $p_1$.   

The fundamental group $\pi_1(\partial H_1,p_2)$ 
of $\partial H_1$ is the free group in 
$2g$ generators. Let $c\in \pi_1(\partial H_1,p_2)$ be the 
element which can be represented by a loop in $\tilde D$ surrounding
the marked point $p_1$. If we write composition from left to right, then 
the above discussion shows that 
each ordered pair $(D,E)$ of disks in $H$ 
enclosing the two spots determines uniquely the right coset 
of a homotopy class in $\pi_1(\partial H_1,p_2)$ 
by the infinite
cyclic group generated by $c$. 

To avoid working with cosets, we now replace the spot $p_1$ in 
$\partial H_1$ by a boundary component. Let $\Sigma$ be the resulting 
bordered surface. Attaching to the  boundary $\partial \Sigma$ 
of $\Sigma$
a disk yields the boundary of the handlebody $H_0$ without spot. 
Fix a point $p_1$ in $\partial \Sigma$ and denote by
${\cal A}(p_1,p_2)$ the set of isotopy classes of arcs in 
$\Sigma$ with fixed endpoints $p_1,p_2$. 
Such an arc can be viewed as an arc in $\partial H$ with endpoints 
at the spots, and a thickening of such an arc 
defines a disk $\tilde E\subset \partial H$
enclosing the two spots. Vice versa, any disk in $H$ enclosing the two 
spots determines an arc in ${\cal A}(p_1,p_2)$, unique up to the ambiguity
of Dehn twisting the spot $p_1$ about the boundary of $\Sigma$. 
Thus understanding isotopy classes of disks enclosing the two spots 
amounts to understanding ${\cal A}(p_1,p_2)$.

More generally, let for the moment $\Sigma$ be any compact surface with
non-empty boundary $\partial \Sigma$ and perhaps a finite number of 
points removed. Choose a point $p_2$ in the interior of $\Sigma$.
This choice determines a subgroup of the 
mapping class group ${\rm Mod}(\Sigma-\{p_2\})$ of $\Sigma-\{p_2\}$ which
is isomorphic to the fundamental group $\pi_1(\Sigma,p_2)$ 
of the surface $\Sigma$. 
This group is the fiber group of 
the \emph{Birman exact sequence} 
\[0\to \pi_1(\Sigma,p_2)\to {\rm Mod}(\Sigma-\{p_2\})\to {\rm Mod}(\Sigma)\to 0\]
obtained from the map $\Sigma-\{p_2\}\to \Sigma$ 
which forgets the spot $p_2$. Its elements are called
\emph{point pushing maps}. 
If $\gamma\subset \Sigma$ is a based loop at $p_2$, then the point pushing map
along $\gamma$ can be represented by a diffeomorphism supported in 
an arbitrarily small neighborhood of $\gamma$. 

Let as before ${\cal A}(p_1,p_2)$ be the set of isotopy classes of arcs with endpoints 
$p_1,p_2$. For ease of exposition, we view such arcs $\alpha,\beta$ as arcs 
connecting
$p_1$ to $p_2$. Then the composition $\alpha^{-1}\circ \beta$ (read from left to right) is a based 
loop at $p_2$ whose homotopy class we denote by 
$q(\alpha,\beta)$. Note that $q(\alpha,\beta)\in \pi_1(\Sigma,p_2)$ 
only depends on the isotopy class of 
$\alpha,\beta$. For easier exposition, in the sequel we always represent isotopy classes
of arcs by actual arcs and note that the statements we make do not depend on the choice of 
such representatives.

\begin{lemma}\label{pointpush1}
Let $\alpha,\beta\in {\cal A}(p_1,p_2)$. Then $\beta$ is the image of 
$\alpha$ under the element of the mapping class group of $\Sigma-\{p_2\}$ obtained by
pushing the point $p_2$ along a loop in the homotopy class 
$q(\alpha,\beta)$. 
\end{lemma}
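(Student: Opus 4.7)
The plan is to realize the point pushing along $\gamma := q(\alpha,\beta) = \alpha^{-1}\beta$ by an explicit ambient isotopy and then directly track the effect on $\alpha$.

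First, I would choose a representative of $\gamma$ as a loop based at $p_2$ whose interior avoids $\partial \Sigma \cup \{p_1\}$ (and the punctures of $\Sigma$, if any); this is possible since $p_2$ lies in the interior of $\Sigma$ and all constraints are finite in number. I then pick an ambient isotopy $\phi_t \colon \Sigma \to \Sigma$ with $\phi_0 = \phi_1 = \mathrm{id}$, $\phi_t(p_2) = \gamma(t)$, and $\phi_t$ equal to the identity on a neighborhood of $\partial \Sigma \cup \{p_1\}$. By definition of the Birman sequence, the diffeomorphism $\phi_1$ represents the point pushing map associated to $\gamma \in \pi_1(\Sigma,p_2)$.

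Next I would track the image $\phi_1(\alpha)$ by looking at the family $H(s,t) := \phi_t(\alpha(s))$. Since each $\phi_t$ is a diffeomorphism fixing $p_1$, this is a continuous family of embedded arcs from $p_1$ to $\gamma(t)$ starting at $\alpha$ and ending at $\phi_1(\alpha)$. Reading the boundary of the unit square $[0,1]_s \times [0,1]_t$ gives a null-homotopy in $\Sigma$ between the path $\phi_1(\alpha)$ (the top edge) and the concatenation of $\alpha$ (the bottom edge) with $\gamma$ (the right edge), while the left edge is the constant path at $p_1$. Hence $\phi_1(\alpha)$ is homotopic rel endpoints to $\alpha \cdot \gamma$. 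Substituting $\gamma = \alpha^{-1}\beta$ yields $\phi_1(\alpha) \simeq \alpha \cdot \alpha^{-1} \cdot \beta \simeq \beta$ as paths in $\Sigma$ rel $\{p_1, p_2\}$. Because $\phi_1(\alpha)$ and $\beta$ are embedded arcs in a surface which are homotopic rel endpoints, they are isotopic rel endpoints by the classical Epstein--Baer theorem, so they define the same element of $\cal A(p_1,p_2)$.

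The routine subtleties I would need to handle are (i) arranging the isotopy to be supported away from $p_1$ and the boundary so that the resulting mapping class lies in the fiber $\pi_1(\Sigma,p_2)$ of the Birman sequence as stated, and (ii) justifying the passage from homotopy rel endpoints to isotopy for embedded arcs, which follows from the standard surface result. The only genuine content is the square-homotopy computation identifying $\phi_1(\alpha)$ with the concatenation $\alpha\cdot\gamma$; once that is in hand, the conclusion is immediate from the definition of $q(\alpha,\beta)$.
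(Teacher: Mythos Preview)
Your approach is correct and more direct than the paper's, but there is a slip in the setup: drop the condition $\phi_1=\mathrm{id}$. If $\phi_1$ were literally the identity then $\phi_1(\alpha)=\alpha$, and your own square computation would yield $\alpha\simeq\alpha\cdot\gamma$ rel endpoints, forcing $\gamma$ to be null-homotopic. What you actually need is only $\phi_0=\mathrm{id}$ together with $\phi_t(p_2)=\gamma(t)$; then $\phi_1$ fixes $p_2$ (since $\gamma$ is a loop) and, exactly as you say in the next sentence, represents the push along $[\gamma]$ in ${\rm Mod}(\Sigma-\{p_2\})$. With that correction the square homotopy $H(s,t)=\phi_t(\alpha(s))$ gives $\phi_1(\alpha)\simeq\alpha\cdot\gamma\simeq\beta$ rel endpoints, and the appeal to the Epstein--Baer theorem finishes the argument.

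The paper takes a different, more hands-on route. It first treats the case where $\gamma$ is embedded, realizing the push as the product of opposite Dehn twists about the two boundary circles of an annular neighborhood of $\gamma$ and reading off the effect on $\alpha$ directly. For general $\gamma$ it then factors $\gamma=\gamma_1\circ\cdots\circ\gamma_m$ into simple based loops determined by the successive intersections of $\beta$ with $\alpha$, and applies the embedded case inductively. Your argument bypasses this decomposition entirely and is conceptually cleaner; the paper's explicit description in terms of twists, by contrast, makes the geometry of the push visible and is closer in spirit to how point pushing is used later when it is confined to subsurfaces of $\partial H$.
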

\begin{proof} Let $\gamma\subset \Sigma\cup \{p_2\}$ 
be a loop based at $p_2$ constructed
as above from the ordered pair $(\alpha,\beta)$ of arcs, 
and moved off $\Sigma  \ni   p_1$.

Assume first that 
the loop $\gamma$ is simple,
that is, it does not have self-intersections. 
Thus $\gamma$ is embedded
in $\Sigma$ and hence
there is an embedded annulus $A\subset \Sigma$ with core 
curve $\gamma$, disjoint from $p_1$.  
Up to isotopy, the arc $\alpha$ intersects 
$A$ in a single embedded segment $\alpha_0$ with one endpoint $p_2$. Furthermore, 
we may assume that $\alpha_0$ intersects the core curve $\gamma$ 
of $A$ in the unique point $p_2$.

The point pushing homeomorphism of $p_2$ along $\gamma$ equals the identity on 
$(\Sigma-A)\cup \gamma$. In each of the two components of $A-\gamma$, it is a 
Dehn twist about the core curve (which is 
freely homotopic to $\gamma$), in one component positive, in the second negative. 
As $\alpha_0$ and hence $\alpha$ meets only one component of $A-\gamma$, this homeomorphism
transforms the homotopy class of $\alpha$ with fixed endpoint by
concatenation with $\gamma$. As a consequence, the image of the arc $\alpha$ by the 
point pushing map along $\gamma$ is homotopic to the concatenation of 
$\alpha$ with $\gamma$ 
(read from left to right). 
This arc is homotopic with fixed endpoints to $\beta$, and in fact 
isotopic to $\beta$.

This construction extends to the case that the curve $\gamma$ has self-intersections.
Namely, parameterize the arc $\beta$ on the interval $[0,1]$.
Assuming that $\beta$ is in minimal position with respect to $\alpha$, let 
$0=t_0<t_1<\cdots <t_k=1$ be such that $\beta(t_i)$ are the intersection
points of $\beta$ with $\alpha$, with the endpoints included. 
Let $\gamma_1$ be the concatenation of $\alpha^{-1}$ with an arc $\zeta_1$ connecting 
$p_1$ to $p_2$ which is composed of $\beta[0,t_1]$ and the subarc $\alpha_1$ of 
$\alpha$ connecting $\beta(t_1)$ back to $p_2$.
With a small homotopy with fixed endpoints,
the arc $\zeta_1$ can be pushed off the interior
of $\alpha$. The resulting loop based at $p_2$ is simple and can
be pushed off $p_1$. Denote this loop again by $\gamma_1$.

In a second step, define a based loop $\gamma_2$ at $p_2$ 
as follows. Let $2\leq j\leq k$ be such that $\beta(t_j)$ is the first intersection point of
$\beta(t_1,1]$ with the subarc $\alpha_1$ of $\alpha$ 
(perhaps this is the endpoint
$p_2$ of $\beta$). Let $\gamma_2$ be the concatentation of 
$\zeta_1^{-1}$, the subarc $\beta[0,t_j]$ of $\beta$ and the subarc 
$\alpha_2$ of $\alpha$ connecting
$\beta(t_j)$ back to $p_2$. This loop is homotopic with fixed endpoints to the
concatenation of $\alpha_1^{-1}$, the arc $\beta[t_1,t_j]$ and the arc $\alpha_2$ and hence
it is simple. Furthermore, it can also be described as the concatentation
of $\zeta_1^{-1}$ and an arc $\zeta_2$ connecting $p_1$ to $p_2$.  

Proceeding inductively, define for $\ell\leq m$ (where $m\leq k$ is a number
computed from the order in which the points of $\alpha\cap \beta$ 
are passed through by $\beta$) a based 
loop $\gamma_\ell$ at $p_2$ composed of 
the arc $\zeta_{\ell-1}^{-1}$ and an arc $\zeta_\ell$ connecting 
$p_1$ to $p_2$ which is a concatenation of a  subarc $\beta[0,t_{j(\ell)}]$ of $\beta$ and the subarc of $\alpha$ connecting 
$\beta(t_{j(\ell)})$ back to $p_2$.
Up to homotopy, 
the loops $\gamma_\ell$ are simple, and we have
$\gamma=\gamma_1\circ \cdots \circ \gamma_m$ (read from left to right). 

By the first part of this proof, the image of $\alpha$ under
point pushing along $\gamma_1$
is the arc  $\zeta_1$.
By the above construction, the arc $\zeta_1$ intersects $\gamma_2$ only at the point $p_2$. 
Thus using
again the first part of this proof, point pushing of $\zeta_1$ 
along the loop $\gamma_2$ yields the 
arc $\zeta_2$. As the point pushing group is a group, we also know that $\zeta_2$ is the
image of $\alpha$ under point pushing along $\gamma_1\circ \gamma_2$ (read from left to right). 
In $m\leq k$ such steps we deduce that indeed, the arc $\beta$ is isotopic to the image of 
$\alpha$ under pointpushing of $p_2$ along $\gamma$.
\end{proof}

As a corollary, we obtain Theorem \ref{main3} from the introduction.

\begin{corollary}\label{arcs}
Let $\alpha\in {\cal A}(p_1,p_2)$ be a fixed basepoint. Then the 
map which associates to an element $\gamma\in \pi_1(\Sigma,p_2)$ 
the image of $\alpha$ under point pushing along $\gamma$ defines a bijection 
$\pi_1(\Sigma,p_2)\to {\cal A}(p_1,p_2)$. 
\end{corollary}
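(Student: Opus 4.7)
The plan is to deduce the corollary from Lemma \ref{pointpush1} by combining it with two standard facts: first, the classical theorem (Baer--Epstein) that on an orientable surface two arcs with common fixed endpoints are isotopic rel endpoints if and only if they are homotopic rel endpoints; and second, the elementary observation that homotopy classes of paths from $p_1$ to $p_2$ form a right torsor under $\pi_1(\Sigma,p_2)$ via concatenation at $p_2$.

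First I would isolate from the proof of Lemma \ref{pointpush1} the slightly stronger conclusion that the arc $P_\gamma(\alpha)$ obtained by point pushing $\alpha$ along $\gamma$ is homotopic rel endpoints to the concatenation $\alpha\cdot\gamma$ (this is exactly what is shown in the proof, before invoking the translation into $\beta$). Combined with the first standard fact above, this identifies the map of the corollary with the composition
\[\pi_1(\Sigma,p_2)\;\xrightarrow{\;\gamma\mapsto[\alpha\cdot\gamma]\;}\;\{\text{homotopy classes of paths }p_1\to p_2\}\;\xrightarrow{\;\sim\;}\;\mathcal{A}(p_1,p_2).\]
The first arrow is the torsor action determined by the fixed basepoint $\alpha$, hence a bijection by the second standard fact; the second arrow is a bijection by the first. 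So their composition is a bijection, which is the statement of the corollary.

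Unpacking this more concretely, surjectivity is immediate from Lemma \ref{pointpush1} by taking $\gamma=q(\alpha,\beta)$, and injectivity reduces to the cancellation statement $\alpha\cdot\gamma_1\simeq\alpha\cdot\gamma_2\Rightarrow\gamma_1=\gamma_2$ in $\pi_1(\Sigma,p_2)$, obtained by pre-concatenating with $\alpha^{-1}$. There is no serious obstacle here: the geometric content sits entirely inside Lemma \ref{pointpush1}, and the remainder is a routine repackaging via the torsor structure together with the equivalence of arc isotopy and arc homotopy, which is standard for orientable surfaces and extends without change to the case of finitely many interior punctures on $\Sigma$.
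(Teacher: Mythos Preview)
Your proof is correct and essentially mirrors the paper's argument: surjectivity from Lemma~\ref{pointpush1}, and injectivity from the cancellation $[\alpha\cdot\gamma_1]=[\alpha\cdot\gamma_2]\Rightarrow\gamma_1=\gamma_2$ after extracting from that lemma's proof that $P_\gamma(\alpha)\simeq\alpha\cdot\gamma$ rel endpoints. The one minor difference is that the paper does not invoke Baer--Epstein: Lemma~\ref{pointpush1} already delivers an \emph{isotopy} (not merely a homotopy) for the surjectivity step, and injectivity only needs the trivial implication isotopic $\Rightarrow$ homotopic, so your appeal to the homotopy--isotopy equivalence, while harmless, is not required.
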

\begin{proof}
Lemma \ref{pointpush1} shows that up to homotopy, 
any arc $\beta\in {\cal A}(p_1,p_2)$ can 
be obtained from the fixed arc $\alpha$ by point pushing along a loop 
$\gamma$. 
Since two point pushing homeomorphisms along homotopic loops are isotopic, the 
isotopy class of the resulting arc only depends on the homotopy class of the 
loop $\gamma$. Thus the map $\Lambda$ which associates to a homotopy class 
$[\gamma]\in \pi_1(\Sigma,p_2)$ the isotopy class of the 
image of $\alpha$ by point pushing 
along a based loop in the class $[\gamma]$ is surjective.

On the other hand, using again Lemma \ref{pointpush1} and its proof,
if $\beta$ is obtained from $\alpha$ by point pushing along a based loop 
$\gamma$, then the homotopy class of $\beta$ with fixed endpoints equals the 
homotopy class of the concatenation $\alpha \circ \gamma$.
This means that  
point pushing along non-homotopic loops
gives rise to arcs in different homotopy classes with fixed endpoints and hence
to arcs which are not isotopic. This shows that the map $\Lambda$ is injective
as well.
\end{proof}

Corollary \ref{arcs} needs to be slightly modified to obtain a statement about disks
in the handlebody $H$ enclosing the two spots, or,
equivalently, arcs with fixed endpoints on a compact oriented surface, where the
endpoints are interior points of the surface. Namely, looking again at two 
disks $\tilde D,\tilde E$ enclosing the two spots $p_1,p_2$ and arcs 
$\alpha\subset \tilde D,\beta\subset \tilde E$  connecting the two spots, 
point pushing $\tilde D$ along $\gamma= \alpha^{-1}\circ \beta$ requires  moving 
$\gamma$ off $p_1$ which depends on a choice. Two choices differ by 
the homotopy class of a based loop at $p_2$ which is entirely contained in 
$\tilde D$ and encloses the spot $p_1$.  
The next lemma shows that
the disk $E$ is independent of the choice made.

\begin{lemma}\label{nochoice}
The isotopy class of the disk $\tilde D$ is fixed by point pushing $p_2$  
along a loop $\zeta$ based at $p_2$ which is entirely contained in the disk $\tilde D$
and encircles $p_1$.
\end{lemma}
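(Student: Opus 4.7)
The plan is to exploit the locality of the point pushing construction: the point push $T_\zeta$ admits a representative diffeomorphism of $\partial H$ supported in an arbitrarily small neighborhood of $\zeta$. Since $\zeta\subset \tilde D$ and $\zeta$ avoids $p_1$, this support can be chosen to lie inside $\tilde D$ and to miss $p_1$, hence in particular to be disjoint from $\partial D$. The representative then fixes $\partial D$ pointwise, which forces it to preserve the isotopy class of $\tilde D$.

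To make this precise, I would first reduce to the case that $\zeta$ is simple. The surface $\tilde D - \{p_1\}$ deformation retracts onto a circle, so $\pi_1(\tilde D - \{p_1\}, p_2)\cong \mathbb{Z}$, and any loop based at $p_2$ encircling $p_1$ represents a power of a fixed generator. Since point pushing defines a homomorphism $\pi_1(\partial H - \{p_1\}, p_2)\to {\rm Mod}(\partial H - \{p_1,p_2\})$, it suffices to verify the claim when $\zeta$ represents a single generator, which I would take to be a simple closed curve based at $p_2$ that separates $p_1$ from $\partial \tilde D$ inside $\tilde D$. For such a simple $\zeta$, the first part of the proof of Lemma \ref{pointpush1} applies directly: $T_\zeta$ is realized by a diffeomorphism $\phi$ of $\partial H$ supported in a thin annular regular neighborhood $N$ of $\zeta$, and we are free to choose $N$ so that $N\subset \tilde D$ and $p_1\notin N$.

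Because $\phi$ is the identity outside $N\subset \tilde D$, it fixes $\partial D=\partial \tilde D$ pointwise and leaves $\tilde D$ invariant as a subset of $\partial H$, with both spots $p_1$ and $p_2$ still contained in $\tilde D$. Since essential disks in the handlebody $H$ are determined up to isotopy by their boundary curves in $\partial H - \{p_1,p_2\}$, this shows that $T_\zeta$ fixes the isotopy class of $D$, and hence of $\tilde D$. I do not expect a serious obstacle in this argument; the key localized model for the point push along a simple loop is already established in Lemma \ref{pointpush1}, and the only new point is the verification that the support of $\phi$ can be arranged to sit inside $\tilde D$, which follows immediately from the hypothesis that $\zeta$ itself lies there.
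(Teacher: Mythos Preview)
Your proof is correct and follows essentially the same approach as the paper: both exploit the locality of the point pushing diffeomorphism to arrange that its support lies strictly inside $\tilde D$, so that $\partial \tilde D$ is fixed pointwise. The only cosmetic difference is that the paper encloses $\zeta$ (and both spots) in a slightly smaller disk $\hat D\subset \tilde D$ and supports the diffeomorphism there, whereas you reduce to a simple generator of $\pi_1(\tilde D-\{p_1\},p_2)\cong\mathbb{Z}$ and use an annular neighborhood disjoint from $p_1$; both devices achieve the same end.
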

\begin{proof} 
Up to homotopy with fixed basepoint, 
the loop $\zeta$ is embedded in a disk $\hat D\subset \tilde D$ 
which is isotopic to $\tilde D$, in particular, it 
contains the two marked points $p_1,p_2$ in its interior, and whose closure in 
$\partial H$ is contained in the interior of $\tilde D$. 
Point pushing of $p_2$ along $\zeta$ can be represented by a diffeomorphism which 
fixes $\partial H-\hat D$ and hence $\partial \tilde D$
pointwise. As a consequence,
point pushing
of $p_2$ along $\zeta$ preserves the disk $\hat D$.
\end{proof}

\begin{remark}
Replacing one of the spots by a boundary component and marking a point on this
component removes the ambiguity in the point pushing construction. 
\end{remark}

Let us summarize what we obtained so far.
Let $D\subset H$ be a disk enclosing the two spots
$p_1,p_2$.  
Its boundary $\partial D$ is a simple closed curve in $\partial H$. 
It determines the homotopy class of an arc in 
$\partial H_0$ with endpoints 
$p_1,p_2$. 
Let us choose such an arc $\alpha\subset \partial H_0$, oriented 
in such a way that it connects
$p_1$ to $p_2$, and let $a$ be its homotopy class with fixed endpoints
as an arc in $\partial H_0$. 

Let $E\subset H$ be another such disk which determines the homotopy class
$b$ of an oriented arc $\beta\subset \partial H_0$
connecting $p_1,p_2$. The concatenation 
$\alpha^{-1}\circ \beta$ (read from left to right) 
is a loop based at $p_2$. It defines a right coset 
$a^{-1}\cdot b\in \langle c \rangle\backslash \pi_1(\partial H_1,p_2)$
where $c$ is a loop encircling the spot $p_1$ which is entirely contained in the 
twice punctured disk $\tilde D\subset \partial H$ whose boundary 
equals the boundary of $D$.

Move as before $\alpha^{-1}\circ \beta$ off $p_1$.
Point pushing $p_2$ along the this defined loop 
$\alpha^{-1}\circ \beta$ in $\partial H_1$ 
defines the isotopy class of a 
homeomorphism of $\partial H$.
It can be represented by a homeomorphism
which equals the identity on the complement of a small neighborhood of 
$\alpha^{-1}\circ \beta$, and we may assume that the point 
$p_1$ is not contained in this neighborhood.
The image of the disk $D$ under this homeomorphism
is the disk $E$. Thus Lemma \ref{pointpush1} describes an algorithm which 
begins with a pair of disks $(D,E)$ enclosing the two spots and 
determines from this pair up to an ambiguity arising from choosing how to 
move off the spot $p_1$ 
a point pushing homeomorphism of $\partial H$ which transforms $D$ into $E$. 
As this ambiguity corresponds to the choice of a representative of the corresponding 
coset $\langle c\rangle \backslash \pi_1(\partial H\cup \{p_2\},p_2)$, we conclude 
the following

\begin{corollary}\label{diskenclosing}
The point pushing map induces a bijection from the set of cosets 
$\langle c \rangle \backslash \pi_1(\partial H\cup \{p_2\},p_2)$ onto the set of isotopy classes of 
disks in $H$ enclosing the two spots. 
\end{corollary}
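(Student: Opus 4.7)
The plan is to combine Corollary \ref{arcs} with Lemma \ref{nochoice} to transport the arc-level bijection of Corollary \ref{arcs} from arcs ending at a boundary marked point to disks enclosing two interior spots. Fix a base disk $D\subset H$ enclosing the two spots, together with an arc $\alpha\subset \tilde D$ connecting $p_1$ to $p_2$. To each element $[\gamma]\in \pi_1(\partial H\cup\{p_2\},p_2)$ I associate the isotopy class of the image of $D$ under the point pushing diffeomorphism of $p_2$ along a representative loop for $\gamma$; this produces a map
\[
\Theta:\pi_1(\partial H\cup\{p_2\},p_2)\longrightarrow \{\text{isotopy classes of disks enclosing the two spots}\}.
\]
Any based loop at $p_2$ in $\partial H\cup\{p_2\}$ automatically avoids $p_1$, and homotopic loops induce isotopic point pushing homeomorphisms, so $\Theta$ is well defined.

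Next I would check that $\Theta$ factors through cosets modulo $\langle c\rangle$. By Lemma \ref{nochoice}, point pushing along any loop in $\langle c\rangle$ preserves $D$ up to isotopy. Since point pushing along a concatenation of loops is, up to isotopy, the corresponding composition of point pushing diffeomorphisms (Birman exact sequence), dragging $p_2$ first along $c^k$ and then along a representative of $\gamma$ starts from $D$, leaves it unchanged in the first stage, and then sends it to $\Theta([\gamma])$. Hence $\Theta([c^k\gamma])=\Theta([\gamma])$ for every $k\in\mathbb{Z}$, and $\Theta$ descends to a map
\[
\overline{\Theta}:\langle c\rangle\backslash \pi_1(\partial H\cup\{p_2\},p_2)\longrightarrow \{\text{isotopy classes of disks enclosing the two spots}\}.
\]

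Surjectivity of $\overline{\Theta}$ is precisely the content of the construction summarized just before the statement of the corollary: given any disk $E$ enclosing the two spots, pick an arc $\beta\subset \tilde E$ from $p_1$ to $p_2$, form the loop $\alpha^{-1}\circ\beta$, and move it off $p_1$ to obtain a class $\gamma_E\in \pi_1(\partial H\cup\{p_2\},p_2)$. Applying Lemma \ref{pointpush1} to the surface obtained from $\partial H\cup\{p_2\}$ by replacing $p_1$ with a boundary circle carrying a marked point $p_1$ shows that point pushing along $\gamma_E$ sends $\alpha$ to $\beta$, and hence a small regular neighborhood of $\alpha$ to one of $\beta$; these neighborhoods are $\tilde D$ and $\tilde E$ respectively, so $\overline{\Theta}([\gamma_E])=E$.

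For injectivity, suppose $\overline{\Theta}([\gamma_1])=\overline{\Theta}([\gamma_2])=E$, and let $\beta_i\subset \tilde E$ be the arc obtained from $\alpha$ under point pushing along $\gamma_i$. Both $\beta_1$ and $\beta_2$ lie in the twice-punctured disk $\tilde E$ connecting its two punctures, so they differ up to isotopy by a power of the loop encircling $p_1$ within $\tilde E$. After replacing $p_1$ by a boundary circle, Corollary \ref{arcs} identifies this arc-level twist ambiguity with the subgroup $\langle c\rangle\subset \pi_1(\partial H\cup\{p_2\},p_2)$, forcing $[\gamma_2]\in \langle c\rangle\cdot [\gamma_1]$. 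The main obstacle is this final identification: one must verify that the intrinsic twist of an arc inside $\tilde E$ about $p_1$ corresponds precisely to the element $c$ used in the coset space definition, and not merely to a loop conjugate to $c$; this is the role that Lemma \ref{nochoice} plays in pinning down the correct normalization.
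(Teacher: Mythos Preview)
Your proof is correct and follows essentially the same route as the paper, which deduces the corollary from the discussion immediately preceding it by combining Lemma~\ref{pointpush1}, Corollary~\ref{arcs}, and Lemma~\ref{nochoice}. The concern you flag about $c$ versus a conjugate of $c$ dissolves once you note that, under the bijection of Corollary~\ref{arcs}, the Dehn twist about $\partial\Sigma$ acts on arcs by $\beta\mapsto c_1\cdot\beta$ and hence on the $\pi_1$-coordinate by $\gamma\mapsto(\alpha^{-1}c_1\alpha)\cdot\gamma=c\cdot\gamma$, which is left multiplication by $c$ itself precisely because the base arc $\alpha$ lies in $\tilde D$; Lemma~\ref{nochoice} is really only needed for the well-definedness direction, not for this injectivity step.
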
 

The same construction is also valid for point pushing the point $p_1$. In this
case, the point pushing map constructed in the above
fashion which transforms $D$ into $E$ is point pushing of $p_1$ along the based loop 
$\alpha\circ \beta^{-1}$, moved off $p_2$.

\section{$I$-bundles and disk graphs}\label{ibundles}

While Lemma \ref{pointpush1} and Corollary \ref{arcs} 
do not use specific properties of disks and can be viewed as 
statements about essential simple closed curves on $\partial H$
which become homotopically trivial after closing the spots, 
we now turn to the study of disks enclosing the two spots as vertices
of the disk graph of $H$.
Note that the description of a
disk $E$ enclosing the two spots as the image of 
a disk $D$ by point pushing along an element in 
$\pi_1(\partial H\cup \{p_2\},p_2)$  
established in Lemma \ref{pointpush1} coarsely 
determines a path in the curve graph of $\partial H$  
connecting
$\partial D$ to $\partial E$ (that is, consecutive vertices intersect in uniformly
few points and hence are uniformly close in the curve graph), 
but such a path may be 
highly inefficient. 
In fact, as the curve graph of $\partial H$ is hyperbolic and such paths are also paths in the disk
graph of $H$, it can be deduced from the proof 
of Theorem \ref{diskgraph} below that this
is indeed the case.

Following \cite{H19a,H16}, 
define an \emph{$I$-bundle generator} for $H_0$ to be
a simple closed curve $c\subset \partial H_0$ so that
$H_0$ can be realized as an $I$-bundle over a compact
surface $F$ with connected boundary
$\partial F$ and such that $c$ is the core curve of the 
vertical boundary of the $I$-bundle.\footnote{The assumption of 
connectedness of the boundary of the base of the $I$-bundle is not
used in \cite{H19a,H16} but it is needed here to ensure that
the $I$-bundle generator is diskbusting.}  
This vertical boundary is an annulus
bounded by the two preimages of $\partial F$.
The surface $F$ is called the \emph{base} of the $I$-bundle.
If the $I$-bundle generator $c$ is separating, then 
$F$ is orientable of genus $g/2$ where $g$ is the genus of $H_0$.
If $c$ is non-separating, then the surface $F$ is non-orientable, and
the complement of an open annulus about $c$ in 
$\partial H_0$ is the orientation cover of $F$. 
The $I$-bundle over every essential 
arc in $F$ with endpoints on $\partial F$ is an essential
disk in $H_0$ which intersects $c$ in precisely two points (up to isotopy).

An $I$-bundle generator
$c$ in $\partial H_0$ is \emph{diskbusting}, 
which means that it has an essential 
intersection with every disk (see \cite{MS10,H19a}). 
Namely, the base $F$ of the $I$-bundle is a deformation retract of $H_0$. 
Thus if $\gamma$ is any essential closed curve on $\partial H_0$ 
which does not intersect $c$ then $\gamma$ projects to 
an essential closed curve on $F$. 
Such a curve is not nullhomotopic in $H_0$ and hence it can not be diskbounding.

The \emph{arc graph} ${\cal A}(X)$ of a compact 
surface $X$ of genus $n\geq 1$ with 
connected boundary $\partial X$ and possibly marked points (punctures)
in the interior of $X$ is the graph whose
vertices are isotopy classes of 
essential arcs in $X$ with endpoints on the
boundary, and isotopies are allowed to move the endpoints 
of an arc along $\partial X$. 
Two such arcs are connected by
an edge of length one if and only if they can
be realized disjointly. The arc graph ${\cal A}(X)$ of $X$ is 
hyperbolic, however the inclusion of ${\cal A}(X)$ into 
the arc and curve graph of $X$ is a 
quasi-isometry only if $X$ is of genus one, with at most one marked point
\cite{MS10} (see also
\cite{H16}).

For an $I$-bundle generator $c$ in $\partial H_0$ let 
${\cal R\cal D}(c)$ be the complete subgraph of the disk graph 
${\cal D\cal G}_0$ of 
$H_0$ consisting of disks which intersect $c$ in precisely two points.
Each such disk is an $I$-bundle over 
an arc in the base $F$ of 
the $I$-bundle corresponding to $c$. We refer to the discussion preceding
Lemma 2.1 of \cite{H12} for details. 
As two such disks are disjoint
if and only if the corresponding arcs in $F$ are disjoint, the
graph ${\cal R\cal D}(c)$ is isometric to the arc graph 
${\cal A}(F)$ of $F$. 
The following is a consequence
of \cite{MS10,H16,H19a}. We refer to Lemma 2.2 of \cite{H12} for a
detailed proof.

\begin{lemma}\label{casewithoutspot}
For each $I$-bundle generator $c$ of $\partial H_0$, 
the inclusion ${\cal R\cal D}(c)\to {\cal D\cal G}_0$ is a quasi-isometric
embedding.
\end{lemma}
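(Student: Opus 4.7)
The inclusion $\iota\colon {\cal R\cal D}(c)\to {\cal D\cal G}_0$ is tautologically $1$-Lipschitz, so what has to be shown is the reverse inequality: there are constants $K\geq 1,L\geq 0$ such that for all $D,E\in {\cal R\cal D}(c)$ one has $d_{{\cal R\cal D}(c)}(D,E)\leq K\,d_{{\cal D\cal G}_0}(D,E)+L$. The plan is to produce a coarse Lipschitz retraction $\pi\colon {\cal D\cal G}_0\to {\cal R\cal D}(c)$; once such a map exists, the inequality is immediate.

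The definition of $\pi$ uses the $I$-bundle structure. Let $p\colon H_0\to F$ denote the bundle projection and let $A\subset \partial H_0$ be an open annular neighborhood of $c$ whose core is $c$. The complement $\partial H_0\setminus A$ either consists of two copies of $F$ glued to $A$ along $\partial F$ (if $c$ is separating and $F$ is orientable) or is the orientation double cover $\tilde F\to F$ of $F$ (if $c$ is non-separating); in either case there is a natural map $q\colon \partial H_0\setminus A\to F$. Given any disk $D$, put $\partial D$ in minimal position with $c$ and cut $\partial D$ along the points of $\partial D\cap c$. The resulting arcs lie in $\partial H_0\setminus A$ and $q$ sends them to a multi-arc in $F$ with endpoints on $\partial F$. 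I would first argue that, since $c$ is diskbusting and $D$ is diskbounding, at least one of the resulting arcs must be essential in $F$: any innermost disk argument shows that if all these arcs were peripheral, $\partial D$ could be isotoped to be disjoint from $c$, contradicting diskbustedness. Choose any essential arc in this multi-arc to be $\pi(D)$, viewing ${\cal A}(F)\cong {\cal R\cal D}(c)$.

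Next I would verify the two properties of $\pi$. First, the restriction of $\pi$ to ${\cal R\cal D}(c)$ is coarsely the identity: a disk $D\in {\cal R\cal D}(c)$ is by definition an $I$-bundle over an essential arc $a\subset F$, and in minimal position its boundary meets $c$ in exactly two points; the two resulting arcs of $\partial D\setminus c$ both project under $q$ to a curve isotopic to $a$, so $\pi(D)=a$ up to the identification ${\cal R\cal D}(c)\cong {\cal A}(F)$. Second, $\pi$ is coarsely Lipschitz: if $D$ and $E$ are disjoint essential disks, they can be simultaneously realized in minimal position with $c$, so the arcs of $\partial D\setminus c$ and $\partial E\setminus c$ are disjoint in $\partial H_0\setminus A$; their images under $q$ are disjoint multi-arcs in $F$, which are therefore at distance at most $1$ in the arc graph ${\cal A}(F)$, independent of which essential component is chosen as $\pi(D)$ and $\pi(E)$.

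The main obstacle is the ambiguity in the choice of an essential component of the multi-arc $q(\partial D\setminus c)$; in particular, different essential components of the same multi-arc, or the arcs obtained for two isotopic disks placed in slightly different minimal positions, must lie at uniformly bounded distance in ${\cal A}(F)$. This is a standard subsurface-projection style verification (and is exactly the content carried out in detail in Lemma 2.2 of \cite{H12}, building on the surgery arguments of \cite{MS10,H19a,H16}), but it requires care: one has to check that all essential components of a single multi-arc are pairwise disjoint in $F$, hence within distance one of each other in the arc graph, so that $\pi$ is well-defined up to uniformly bounded error. Once this is in place, the combination of the two previous properties of $\pi$ yields the desired quasi-isometric embedding.
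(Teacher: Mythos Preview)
Your overall strategy is correct and coincides with the argument the paper has in mind: the paper does not actually prove this lemma but simply refers to Lemma~2.2 of \cite{H12} (building on \cite{MS10,H19a,H16}), and the coarse Lipschitz retraction you describe is precisely the mechanism used there and repeated verbatim in the paper's own proof of the spotted analogue, Lemma~\ref{casewithspot}.

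There is, however, one genuine slip in your construction of $\pi$. You define $q\colon \partial H_0\setminus A\to F$ as the natural map coming from the bundle structure (in the separating case, the identification of each of $F_+,F_-$ with $F$; in the non-separating case, the orientation double cover). You then assert that if $D,E$ are disjoint disks, the images under $q$ of $\partial D\setminus c$ and $\partial E\setminus c$ are disjoint multi-arcs in $F$. This is false: an arc of $\partial D$ lying in $F_+$ and an arc of $\partial E$ lying in $F_-$ are trivially disjoint upstairs, but their $q$-images in $F$ can and typically do intersect, since $q$ factors through the involution $\Psi$ and $\partial D$ need not be disjoint from $\Psi(\partial E)$. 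The same remark applies to two arcs of a single $\partial D$ lying on opposite sides, so your well-definedness discussion has the same issue.

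The fix is the one the paper uses in the proof of Lemma~\ref{casewithspot}: do not project via $q$, but instead fix once and for all one of the two sides (say $F_+$ when $c$ is separating, or an orientable subsurface $F_0\subset\partial H_0$ bounded by $c$ and the preimage of a M\"obius-band core when $c$ is non-separating) and take $\pi(D)$ to be any essential component of $\partial D\cap F_+$. Since $c$ is diskbusting, $\partial D$ must meet $F_+$ in at least one essential arc; disjoint disks now give genuinely disjoint arcs in the single surface $F_+\cong F$; and different choices of component are pairwise disjoint, hence at distance $\leq 1$. With this correction your plan goes through exactly as written.
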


As the $I$-bundle over an arc with endpoints on $\partial F$ intersects the curve $c$ in 
precisely two points, such $I$-bundles over arcs with large distance in the arc
graph of $F$ give rise to disks with  
large distance in the disk graph of $H_0$. 
However, the distance in the curve graph of their boundary
circles is at most $4$. In \cite{MS10}, such a subspace of the disk graph is 
called a hole. 


Let as before $F$ be the base of the $I$-bundle determined by
the $I$-bundle generator $c$.
Let $\Psi$ be the involution of $H_0$ which exchanges the two endpoints of the intervals
in the interval bundle. Its fixed point set intersects $\partial H_0$ in the $I$-bundle
generator $c$. Let $\tilde F\subset \partial H_0$ be the preimage of $F$; this is 
the complement of an annulus in $\partial H_0$ with core curve $c$.
Choose a point $p_1\in \partial \tilde F$, let 
$p_2=\Psi(p_1)$ and define $H=H_0-\{p_1,p_2\}$. 
The boundary of the thickening of the 
interval with endpoints $p_1,p_2$ is the boundary of a disk $D$  
enclosing the two spots. 
A disk in ${\cal R\cal D}(c)$ which is disjoint from $p_1,p_2$ defines
a disk in $H$ which is invariant under $\Psi$ up to isotopy and is disjoint from $D$.

Push the points $p_1,p_2=\Psi(p_1)$ slightly into the interior of $\tilde F$
so that $\tilde F$ can be thought of as a two-sheeted cover of a surface   
$F^+$ with connected boundary and one marked point (spot) in its interior.
Let ${\cal A}(F^+)$ be the arc graph of the surface $F^+$, and let 
${\cal R\cal D}^+(c)$ be the subgraph of the disk graph
${\cal D\cal G}$ of $H$ whose vertices 
are $\Psi$-invariant disks, with boundaries  
intersecting the fixed point set of $\Psi$ in precisely two points.

For some $\ell\geq 1$, a  \emph{coarse $\ell$-Lipschitz retraction}
of a geodesic metric space $X$ onto a subspace $Y\subset X$ is a coarse 
$\ell$-Lipschitz map $E:X\to Y$ (that is, a map which is $\ell$-Lipschitz up to
a uniform additive constant) 
such that there exists a number $k>0$ with 
$d(Ey,y)\leq k$ for all $y\in Y$. The subspace $Y\subset X$ is quasi-isometrically
embedded in $X$. Since $X$ is a length space and quasi-geodesics
do not have to be continuous, this is equivalent to stating
that any two points in $Y$ can be connected by a uniform quasi-geodesic in $X$ 
which is entirely contained in $Y$.
In analogy to  
Lemma \ref{casewithoutspot}, we have

\begin{lemma}\label{casewithspot}
There exists a coarse two-Lipschitz retraction 
$\Omega:{\cal D\cal G}\to {\cal R\cal D}^+(c)$. 
Furthermore, ${\cal R\cal D}^+(c)$ is isometric 
to the arc graph ${\cal A}(F^+)$.
\end{lemma}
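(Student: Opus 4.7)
The plan is to treat the two assertions independently: the identification $\mathcal{RD}^+(c)\cong\mathcal{A}(F^+)$ as graphs, and the construction of the coarse two-Lipschitz retraction $\Omega$. For the first, I would use the $I$-bundle structure of $H_0$ together with the involution $\Psi$. A vertex $D\in\mathcal{RD}^+(c)$ is a $\Psi$-invariant essential disk in $H$ whose boundary meets the fixed set $c$ of $\Psi$ in exactly two points, and this invariance together with the intersection condition forces $D$ to be the $I$-bundle over a uniquely determined essential arc in $F$ with endpoints on $\partial F$, exactly as in the discussion preceding Lemma~\ref{casewithoutspot}. Descending along $\tilde F\to F^+$, the arc becomes an essential arc in $F^+$, and the requirement that $D$ avoid the spots $p_1,p_2\in\tilde F$ translates to avoiding the marked point of $F^+$. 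The inverse assignment lifts an arc in $F^+$ to $\tilde F$ and takes its $I$-bundle. Two such disks can be realized disjointly in $H$ iff the associated arcs can be realized disjointly in $F^+$, so the bijection is a graph isometry.

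For the retraction, I would imitate the subsurface projection argument underlying Lemma~\ref{casewithoutspot} (and its one-spot analogue in \cite{H12}), projecting to the quotient $F^+$. Given $E\in\mathcal{DG}$, represent $\partial E$ in minimal position with $c$ on $\partial H$, regarding the two spots as punctures. If $\partial E\cap c\neq\emptyset$, the components of $\partial E\cap\tilde F$ are essential arcs in $\tilde F\setminus\{p_1,p_2\}$ with endpoints on $\partial\tilde F$; push any such arc down along $\tilde F\to F^+$ to obtain an essential arc in $F^+$ disjoint from the marked point, and declare this to be $\Omega(E)$. If $\partial E\cap c=\emptyset$, the disk-busting property of $c$ on $\partial H_0$ forces $E$ to enclose the spots, and I would instead invoke Corollary~\ref{arcs} to associate to $E$ a canonical arc in $F^+$ via the point-pushing description and take that as $\Omega(E)$.

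The Lipschitz bound is then the standard disjointness argument: if $E_1,E_2\in\mathcal{DG}$ are disjoint, realize both simultaneously in minimal position with $c$; the arc systems they produce on $\tilde F$ are mutually disjoint and their images on $F^+$ remain disjoint, giving $d_{\mathcal{A}(F^+)}(\Omega(E_1),\Omega(E_2))\leq 1$. The factor of two and the additive error in the coarse Lipschitz estimate arise only from the discrepancy between the two cases of the definition, which is controlled by the disjointness of $E_1$ and $E_2$ in $H$. The retraction property $\Omega(D)=D$ on $\mathcal{RD}^+(c)$ is immediate, since the two intersection points of $\partial D$ with $c$ cut $\partial D$ into two arcs in $\tilde F$ whose push-down to $F^+$ is exactly the arc associated to $D$ in the first part. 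The main obstacle I expect is handling the degenerate case $\partial E\cap c=\emptyset$, which genuinely occurs in the non-separating $I$-bundle regime when $E$ encloses the spots without meeting $c$; reconciling the point-pushing description from Corollary~\ref{arcs} with the cut-and-project definition in a way that keeps the total Lipschitz constant uniformly equal to two is where the real work lies.
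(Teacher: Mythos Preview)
Your treatment of the isometry $\mathcal{RD}^+(c)\cong\mathcal{A}(F^+)$ is correct and matches the paper. For the retraction in the separating case your idea is essentially the paper's: there $\tilde F$ has two components, the paper simply picks one of them (identified with $F^+$) and sends a disk $E$ to the $I$-bundle over any component of $\partial E\cap F^+$. This is coarsely one-Lipschitz, and since the spots lie in different components of $\partial H\setminus c$, every disk (including those enclosing the spots) meets $c$, so no degenerate case arises.

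The genuine gap is in the non-separating case, and it is not where you locate it. Your key step ``push any such arc down along $\tilde F\to F^+$'' and the subsequent claim that ``their images on $F^+$ remain disjoint'' fails: $\tilde F\to F^+$ is the orientation double cover with deck transformation $\Psi\vert_{\tilde F}$, so disjoint arcs $\alpha,\beta\subset\tilde F$ project to arcs that intersect whenever $\alpha$ meets $\Psi(\beta)$. There is no reason for $\partial E_1\cap\tilde F$ to be disjoint from $\Psi(\partial E_2\cap\tilde F)$ when $E_1,E_2$ are disjoint disks, so your Lipschitz argument breaks down before you ever reach the case $\partial E\cap c=\emptyset$. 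The paper avoids this by never projecting through the cover: it fixes an orientable subsurface $F_0^+\subset\partial H$ (the complement of a lifted M\"obius band, with boundary components $c_1\simeq c$ and $e_1$), and defines $\Omega(E)$ by a case analysis on where the arcs of $\partial E\cap F_0^+$ have their endpoints --- both on $c_1$, one on each, or both on $e_1$ --- in each case explicitly building an arc in $F^+$ by hand (sometimes routing around the core of the M\"obius band). The two-Lipschitz constant, and the bound $d(D,\Omega(D))\leq 3$ for $D\in\mathcal{RD}^+(c)$, come out of a direct intersection count in this explicit construction. Your appeal to Corollary~\ref{arcs} for the spot-enclosing case is also too vague: that corollary produces arcs in $\partial H$, not in $F^+$, and some concrete recipe is needed to land in $\mathcal{A}(F^+)$.
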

\begin{proof} A disk which is $\Psi$-invariant and 
whose boundary intersects the fixed point set of 
$\Psi$ in precisely two points is the $I$-bundle over an arc in
$F^+$, and two such disks are disjoint if and only if their defining arcs in
$F^+$ are disjoint. Furthermore, the $I$-bundle over  
any nontrivial arc $\alpha$ in $F^+$ is a disk in $H$. 
Thus 
the map which associates to an arc $\alpha \in {\cal A}(F^+)$ 
the $I$-bundle over $\alpha$ is an isometry of
${\cal A}(F^+)$ onto ${\cal R\cal D}^+(c)$.

Let $d_{\cal D\cal G}$ be the distance in the disk
graph ${\cal D\cal G}$ 
of $H$. To construct a coarse Lipschitz
retraction $\Omega:{\cal D\cal G}\to {\cal R\cal D}^+(c)$ 
we proceed along the lines of the proof of Lemma 2.2 of \cite{H12}. Assume first 
that the $I$-bundle generator $c$ is separating. Then $F^+$ can be identified with
a subsurface of $\partial H$. There are two different choices for such an
identification, and we pick one of them.

Let $D\subset H$ be any disk. Since the two marked points on $\partial H$ 
are contained in different
components of $\partial H-c$ and the image of $c$ under the spot forgetful map 
$\Phi:H\to H_0$ is diskbusting, 
the boundary of $D$ intersects $c$ and hence $F^+$. Take any 
component of $F^+\cap \partial D$ and associate to $D$ the $\Psi$-invariant disk 
$\Omega(D)$ which is the $I$-bundle over 
this intersection component. The disk $\Omega(D)$  coarsely does not depend on
choices: Another choice of intersection arc gives rise to a disjoint disk.
Moreover, the images under $\Omega$ of disjoint disks are disjoint and hence  
this construction defines a coarse one-Lipschitz map
$\Omega:{\cal D\cal G}\to {\cal R\cal D}^+(c)$ which satisfies $\Omega(D)=D$ for every
$D\in {\cal R\cal D}^+(c)$. Thus $\Omega$ is a coarse one-Lipschitz retraction, and  
the inclusion ${\cal R\cal D}^+(c)\to {\cal D\cal G}$ is indeed a quasi-isometric
embedding. This completes the proof for separating 
$I$-bundle generators.

This construction can be modified to cover the case of a non-separating
$I$-bundle
generator $c$ as well, that is,  when $F^+$ is a non-orientable surface with 
one marked point. Namely, 
a non-orientable surface $F^+$ of Euler characteristic $-2h$ with connected boundary $\partial F$
and one marked point 
can be represented as the connected sum of an orientable surface of genus $h$ with connected 
boundary and one marked point in the interior 
and a projective plane. Equivalently, $F^+$ contains an orientable subsurface 
$F_0^+\subset F^+$ with two boundary components
$\partial F^+,e$, and $F^+$ is obtained from $F_0^+$ by gluing a  
M\"obius band to the boundary component $e$.  
The fundamental group of $F_0^+$ is an index two subgroup of the fundamental group of $F^+$. 
As the surface $F_0^+$ is oriented, its preimage in the orientation 
cover $\tilde F^+$ of $F^+$ consists of two disjoint copies of $F_0^+$, and $\tilde F^+$ is
obtained from these two copies of $F_0^+$ by connecting the two components  $e_1,e_2$ 
of the preimage of $e$ with an annulus (which is the orientation cover of the M\"obius band). 
The oriented 
$I$-bundle over $F^+$ contains the trivial $I$-bundle over the bordered subsurface $F_0^+$ as a submanifold.

Identify $F_0^+$ with a subsurface of $\partial H$ containing the marked point $p_1$.
The boundary of $F_0^+$ consists of a simple closed curve $c_1$ isotopic to $c$
and a component $e_1$ of the preimage of the boundary of the M\"obius band. 
Denote as before by $\Psi$ the orientation reversing involution of $H$ defined by
the $I$-bundle. 
Let $D\in {\cal D\cal G}$ be any disk. Since an $I$-bundle generator
in $H_0$ is diskbusting and the surface $F_0^+$ contains
one but not both of the marked points, 
the boundary curve $\partial D$ of $D$ 
intersects the surface $F_0^+$ nontrivially in a collection of 
pairwise disjoint arcs.  
%
%
%
If there is such an arc $\alpha$ with both endpoints on $c_1$ then 
the $I$-bundle over $\alpha$ is a disk. Define $\Omega(D)$ to be this disk.
Its intersection with $F_0^+$ is disjoint from $\partial D$. 
The disk $\Omega(D)$ is coarsely well defined, that is, choosing another
component of $\partial D\cap F_0^+$ with both endpoints on
$c_1$ gives rise to a disjoint disk.

Assume next that $\partial D$ does not contain
an arc with both endpoints on $c_1$ but that there is such an intersection
arc $\alpha$ with one endpoint on $c_1$ and the second endpoint $y$ on $e_1$.
Let $\hat \alpha$ be the projection of $\alpha$ to $F^+$. 
Connect the projection of $y\in e_1$ to the core curve of the
M\"obius band, attach an arc making one full turn around the core
curve of the M\"obius band (in either direction), connect back to
the projection of $y$ and backtrack to $\partial F^+$ with 
the inverse of $\hat \alpha$. Up to homotopy, this construction
defines an embedded 
arc in $F^+$  with both endpoints on $\partial F^+$.
Define $\Omega(D)\in {\cal R\cal D}(c)^+$ to be the $I$-bundle over this arc. 
The disk $\Omega(D)$ depends on the choice of the component 
$\alpha$ of $\partial D\cap F_0^+$ and on the choice 
of the direction of the loop around the core curve of the M\"obius band.
However, any two distinct choices give rise to disks whose boundaries intersect 
in at most two points contained in the preimage of the M\"obius band. 
Such disks are disjoint from the $I$-bundle over some arc in 
$F_0^+$ with both endpoints on $c_1$ and hence their distance
in ${\cal D\cal G}$ is at most two. 

Finally if every component of $\partial D\cap F_0^+$ 
is an arc with both endpoints on $e_1$ then up to homotopy, 
$\partial D$ is disjoint from the simple closed curve 
$c$ in $\partial H$ whose projection to $\partial H_0$ is diskbusting. 
As the projection of $\partial D$ to $\partial H_0$ is diskbounding, this implies that 
the projection of 
$\partial D$ to $H_0$ is contractible, in other words, $D$ encloses the two spots.
In this case let $\alpha$ be an arc in $F_0^+$ disjoint from $\partial D$ which 
connects the spot $p_1\in F_0^+$ to a point $y\in e_1$.
Let $\hat \alpha$
be the projection of $\alpha$ into $F^+$. It connects the spot in $F^+$ to the
boundary $e$ of 
the M\"obius band. As in the previous paragraph,  
connect the projection of $y\in e_1$ to the core curve of the 
M\"obius band, attach an arc making one full turn
around the core curve of the M\"obius band 
 (in either direction)
 and connect back to the spot 
 with the inverse of $\hat \alpha$. Up to homotopy, this defines
 an embedded essential
loop in $F^+$ based at the spot which lifts to a $\Psi$-invariant arc
in $\tilde F^+$ connecting the two spots. Define $\Omega(D)$ to be the 
disk enclosing the two spots whose boundary is the boundary of the thickening 
of this arc. Note that for a fixed choice
of $\hat \alpha$, this construction only depends on the choice of the 
direction of the arc going around the M\"obius band, that is, any two distinct choices
intersect in precisely four points near the spots and are 
uniformly close in the disk graph.

Let $\Omega:{\cal D\cal G}\to {\cal R\cal D}^+(c)$ be the coarsely defined
map constructed above. 
We claim that $\Omega$ is coarsely two-Lipschitz.

To show that this is the case it suffices to show that if $D,D^\prime$ are disjoint, 
then their images intersect in at most two points. That this holds indeed true
for disks whose intersections with $F_0^+$ contain an arc with at least
one endpoint on $c_1$ is immediate from the above discussion.
If this is not the case for say the disk $D$, then $D$ encloses the two
spots. Then  $D^\prime$ does not enclose the two spots 
as two disks enclosing the
two spots are not disjoint. But then for suitable choices made in 
the above construction, the boundaries of the disks
$\Omega(D),\Omega(D^\prime)$  
intersect in at most four points, and these points
are contained in the preimage of the M\"obius band. Once again, the 
distance between  
$\Omega(D),\Omega(D^\prime)$ is at most two.

To complete the proof of the lemma it now suffices to show that 
if $D\in {\cal R\cal D}^+(c)$ then
the distance between $D$ and $\Omega(D)$ is uniformly bounded.
To this end note that if $\partial D\cap F_0^+$ contains an arc with
both endpoints on $c_1$ then $\Omega(D)=D$.

If $\partial D\cap  F_0^+$ contains an arc with one endpoint on $c_1$ and the second 
endpoint on $e_1$, then 
the two arcs $\alpha,\beta\subset F^+$ which define $\partial D,\partial \Omega(D)$
may not be disjoint. However, by construction, up to homotopy these arcs
only intersect in the interior of the M\"obius band. 
As we can find an arc in the surface $F^+$ 
which is disjoint from both $\alpha, \beta$ as well as the M\"obius band, we conclude that
the distance in ${\cal A}(F^+)$ between $\alpha$ and $\beta$ is at most two.
As a consequence, the distance between
$D$ and $\Omega(D)$ in ${\cal R\cal D}^+(c)$ is at most two.

We are left with looking at disks in ${\cal R\cal D}^+(c)$ enclosing the two spots. 
Now $\Omega$ is a coarsely two-Lipschitz map, and every disk in ${\cal R\cal D}^+(c)$ which
encloses the two spots is disjoint from a disk which is the $I$-bundle over
an arc in the surface $F_0^+$, and such a disk is mapped to itself by 
$\Omega$.
Thus by the triangle inequality, we obtain that indeed
$d_{\cal D\cal G}(D,\Omega(D))\leq 3$ for all $D\in {\cal R\cal D}^+(c)$.
The lemma follows.
\end{proof}

\begin{remark}\label{notunique}
The construction in the proof of Lemma \ref{casewithspot} yields in fact two distinct
coarse Lipschitz retractions. If the $I$-bundle generator $c$ is separating, 
then we obtain one such retraction for each of the two components of 
$\partial H-c$. If the $I$-bundle generator is non-separating, then there is one
retraction for each choice of a component of the preimage of the boundary of an 
embedded M\"obius band in the base of the $I$-bundle.
\end{remark}

Let as before $c\subset \partial H$ be an $I$-bundle generator, with 
base $F^+$ and involution 
$\Psi$, and let  
$F_0^+\subset \partial H$ be a once punctured subsurface 
whose boundary either is isotopic to $c$ if $c$ is
separating, or consists of two connected components, say $c_1,e_1$, where
$c_1$ is isotopic to $c$ and $e_1$ is a preimage of the boundary of a M\"obius band
in the base of the $I$-bundle otherwise.

Denote by
${\cal A}(F_0^+)$ the subgraph of the arc graph of $F_0^+$ consisting of arcs
with both endpoints on the boundary component $c_1$ of $\partial F_0^+$. 
If $c$ is separating then ${\cal A}(F_0^+)$ 
equals the arc graph of $F_0^+$,  and it is isometric to the arc graph of the surface
$F^+$.
In the case that $c$ is non-separating, then as the projection of 
$F_0^+$ into $F^+$ is an embedding onto a subsurface of $F^+$ containing the 
boundary, this subsurface is a hole for the arc graph ${\cal A}(F^+)$ of $F^+$ 
in the sense of \cite{MS10}. As a consequence, the 
arc graph ${\cal A}(F_0^+)$ quasi-isometrically embeds into the arc graph ${\cal A}(F^+)$. 

The following statement is in some sense an inverse of Lemma \ref{casewithspot}.
In its formulation, we write $\approx$ to denote an equality up to a universal multiplicative
constant. Furthermore, write $d_{\cal D\cal G}$ to denote 
the distance in the disk graph, and let $d_{{\cal A}(F_0^+)}$ be the distance in the arc graph of 
$F_0^+$. Let $p_1$  be the spot contained in $F_0^+$. 
For two disks $D,E$ whose intersections
with $F_0^+$ contain at least one arc with both endpoints on $c_1$ we write 
$d_{{\cal A}(F_0^+)}(\partial D\cap F_0^+,\partial E\cap F_0^+)$ to denote the distance
in ${\cal A}(F_0^+)$ between any two such arcs. This is coarsely well defined.

\begin{lemma}\label{pointpush2}
Let $D$ be a disk with the property that $\partial D\cap F_0^+$ 
contains an arc with both endpoints on $c_1$ and  
let $E$ be a disk which is obtained from the disk $D$
by point pushing $p_1$ along a loop
in $F_0=F_0^+\cup \{p_1\}$ based at $p_1$. Then 
\[d_{\cal D\cal G}(D,E)\approx d_{{\cal A}(F_0^+)}(\partial D\cap F_0^+,\partial E \cap F_0^+).\]
\end{lemma}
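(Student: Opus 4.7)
The plan is to bracket $d_{\cal D\cal G}(D,E)$ between two multiples of $d_{{\cal A}(F_0^+)}(\partial D\cap F_0^+,\partial E\cap F_0^+)$. The lower bound will come from the coarse retraction $\Omega$ of Lemma \ref{casewithspot} combined with the quasi-isometric embedding ${\cal A}(F_0^+)\hookrightarrow {\cal A}(F^+)$ recalled just before the lemma, while the upper bound will come from lifting a geodesic of arcs in ${\cal A}(F_0^+)$ to a path of $I$-bundle disks inside ${\cal R\cal D}^+(c)\subset {\cal D\cal G}$.

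For the upper bound, choose arcs $\alpha_D\subset \partial D\cap F_0^+$ and $\alpha_E\subset \partial E\cap F_0^+$ with both endpoints on $c_1$. The existence of $\alpha_E$ is automatic: by assumption $E$ arises from $D$ by point pushing along a loop in $F_0$, which can be realized by a homeomorphism $\phi$ supported in the interior of $F_0$; thus $\phi$ fixes $c_1$ pointwise and carries $\alpha_D$ to an arc of the required kind inside $\partial E\cap F_0^+$. Fix a geodesic $\alpha_D=\alpha_0,\alpha_1,\ldots,\alpha_k=\alpha_E$ in ${\cal A}(F_0^+)$, view each $\alpha_i$ as an arc in $F^+$, and let $D_i\in {\cal R\cal D}^+(c)$ be the $I$-bundle over $\alpha_i$. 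Disjointness of consecutive $\alpha_i,\alpha_{i+1}$ gives disjointness of $D_i,D_{i+1}$, so $D_0,\ldots,D_k$ is a path of length $k$ in ${\cal D\cal G}$. By the explicit construction of $\Omega$ in the proof of Lemma \ref{casewithspot}, one may arrange $\Omega(D)=D_0$ and $\Omega(E)=D_k$ up to uniformly bounded ambiguity, and the retraction property bounds $d_{\cal D\cal G}(D,\Omega(D))$ and $d_{\cal D\cal G}(E,\Omega(E))$ uniformly. The triangle inequality yields $d_{\cal D\cal G}(D,E)\leq k+O(1)$.

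For the lower bound, Lemma \ref{casewithspot} provides a coarse two-Lipschitz retraction $\Omega:{\cal D\cal G}\to {\cal R\cal D}^+(c)$, and ${\cal R\cal D}^+(c)$ is isometric to ${\cal A}(F^+)$; hence $d_{{\cal A}(F^+)}(\Omega(D),\Omega(E))\leq 2\,d_{\cal D\cal G}(D,E)+O(1)$. The hypothesis on $\partial D\cap F_0^+$ together with the automatic analogue for $E$ identifies $\Omega(D)$ and $\Omega(E)$ with the $I$-bundles over $\alpha_D$ and $\alpha_E$ inside ${\cal A}(F^+)$. Invoking the quasi-isometric embedding ${\cal A}(F_0^+)\hookrightarrow {\cal A}(F^+)$ now converts this into $d_{{\cal A}(F_0^+)}(\alpha_D,\alpha_E)\leq C\,d_{\cal D\cal G}(D,E)+C'$, up to universal constants.

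The hard part is tracking the non-separating case of the construction of $\Omega$, where the branch producing a M\"obius band detour and the branch producing a disk enclosing the two spots must both be ruled out. The hypothesis that $\partial D\cap F_0^+$ contains an arc with both endpoints on $c_1$ is precisely what places us in the first branch of $\Omega$'s definition, so that $\Omega(D)$ is literally the $I$-bundle over such an arc in $F_0^+$ rather than something more complicated. Once this compatibility is verified, the rest of the argument is a direct combination of Lemma \ref{casewithspot} with the holes-type quasi-isometric embedding ${\cal A}(F_0^+)\hookrightarrow {\cal A}(F^+)$.
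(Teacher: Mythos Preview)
Your lower bound is fine and matches the paper's argument. The upper bound, however, has a genuine gap.

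You claim that ``the retraction property bounds $d_{\cal D\cal G}(D,\Omega(D))$ and $d_{\cal D\cal G}(E,\Omega(E))$ uniformly.'' This is not what a coarse Lipschitz retraction gives you. The retraction property of $\Omega$ only asserts that $d_{\cal D\cal G}(\Omega(D'),D')$ is uniformly bounded for $D'\in {\cal R\cal D}^+(c)$; it says nothing about general disks. In fact such a bound is false in general: ${\cal R\cal D}^+(c)$ is far from coarsely dense in ${\cal D\cal G}$. In the lemma, $D$ is an arbitrary disk whose boundary meets $F_0^+$ in an arc $\alpha_D$ with endpoints on $c_1$; outside $F_0^+$ the curve $\partial D$ can be arbitrarily complicated in $\Psi(F_0^+)$, so $D$ can be arbitrarily far from the $\Psi$-invariant $I$-bundle disk $D_0$ over $\alpha_D$. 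Thus your path $D_0,\dots,D_k$ in ${\cal R\cal D}^+(c)$ only connects $\Omega(D)$ to $\Omega(E)$, not $D$ to $E$, and the triangle inequality step fails.

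The paper's upper bound avoids this by exploiting the specific hypothesis that $E$ is the image of $D$ under a point-pushing diffeomorphism of $p_1$ in $F_0$. Such diffeomorphisms act as isometries on ${\cal D\cal G}$, so one can transport $D$ itself along a chain of point-pushing maps rather than working with its projection. Concretely, the paper modifies the geodesic in ${\cal A}(F_0^+)$ so that every other arc encloses the spot $p_1$; consecutive spot-enclosing arcs differ by a point-pushing $\gamma_j$, and the cumulative pushes $\hat\gamma_j=\gamma_1\cdots\gamma_{j}$ applied to $D$ produce a sequence of disks starting at $D$ and ending at $E$, with consecutive ones at bounded distance because each $\gamma_j$ is disjoint from the intermediate arc $\beta_{2j}$. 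This is the missing idea: you must move $D$, not just $\Omega(D)$, and the point-pushing hypothesis is precisely what makes this possible.
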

\begin{proof} Let $\Omega_1:{\cal D\cal G}\to {\cal R\cal D}^+(c)$ be  the coarse 
two-Lipschitz retraction
constructed in Lemma \ref{casewithspot} 
from the surface $F_0^+$ (see Remark \ref{notunique}). By assumption,
the intersection with $F_0^+$ of 
$\partial D$ contains an arc $\alpha_0$ 
with both endpoints on $c_1$, and the intersection of $F_0^+$ with $\partial E$ contains
the image $\zeta_0$ of $\alpha_0$ under point pushing along $p_1$. 
Thus we may assume that  
the images of $D,E$ under the map 
$\Omega_1$ are $\Psi$-invariant disks $\hat D,\hat E$ 
which intersect $F_0^+$ in the arcs $\alpha_0$,
$\zeta_0$. By Lemma \ref{casewithspot}, 
we have $d_{\cal C\cal G}(D,E)\geq 
\frac{1}{2}d_{\cal D\cal G}(\hat D,\hat E)-m$ where $m>0$ is a universal constant. 
Since the isometry ${\cal R\cal D}^+(c)\to {\cal A}(F^+)$ associates to 
a disk in ${\cal R\cal D}^+(c)$ the projection of its boundary into $F^+$, this implies 
that $d_{\cal D\cal G}(D,E)$ is not smaller than a fixed positive multiple of 
$d_{{\cal A}(F_0^+)}(\partial D\cap F_0^+,\partial E\cap F_0^+)$. 
Thus it remains
to show that $d_{\cal D\cal G}(D,E)$ is bounded from above by a fixed positive multiple of 
$d_{{\cal A}(F_0^+)}(\alpha_0,\zeta_0)$.

We say that an arc in $F_0^+$ \emph{encloses the spot $p_1$} if it is the boundary of a thickening
of an arc connecting the preferred boundary component $c_1$ 
of $F_0^+$ to the spot $p_1$. 
Any two such arcs which are not isotopic intersect. 
Let $\alpha_i\subset {\cal A}(F_0^+)$ be a geodesic 
connecting $\alpha_0\subset \partial D\cap F_0^+$
to $\alpha_m=\zeta_0\subset \partial E\cap F_0^+$. We begin with replacing this path by a path $\beta_j$
of length at most $2m$ such that for all $j$, the arc $\beta_{2j+1}$ encloses the spot $p_1$.
To this end it suffices to proceed as follows. If $i<m$ is such that both 
$\alpha_i,\alpha_{i+1}$ do not enclose the spot, then as $\alpha_i,\alpha_{i+1}$ are disjoint, there
exists an arc $\hat \beta$ enclosing the spot $p_1$ which is
disjoint from both $\alpha_i,\alpha_{i+1}$. Replace the 
edge in ${\cal A}(F_0^+)$ 
connecting $\alpha_i$ to $\alpha_{i+1}$ by the path of length two with the same endpoints
which passes through $\hat \beta$. Since no two adjacent vertices in the path $\alpha_i$ can 
enclose the spot $p_1$, this yields a path $\beta_j$ as required.

By the analogue of Corollary \ref{arcs}, the arc $\beta_{2j+1}$ is obtained from 
$\beta_{2j-1}$ by point pushing $p_1$ along a loop $\gamma_j\subset F_0^+$ based at $p_1$. 
Since both $\beta_{2j-1}$ and $\beta_{2j+1}$ are disjoint from $\beta_{2j}$, the same holds true for 
$\gamma_j$. By concatenation, for each $j$, the arc $\beta_{2j-1}$ is obtained from 
$\beta_0=\alpha_0$ by point pushing with the loop 
$\hat \gamma_j=\gamma_1\cdot \dots \cdot \gamma_{j-1}$ (read from left to right). As the arc $\beta_{2j}$ 
is disjoint from the image of $\alpha_0$ by point pushing
along $\hat \gamma_{j-1}$, it is the image
under point-pushing along $\hat \gamma_{j-1}$ of an arc $\hat \beta_{2j}$
disjoint from $\alpha_0$. As such an arc
$\hat \beta_{2j}$ is contained in the boundary of a disk
in ${\cal R\cal D}^+(c)$ which is disjoint from $\alpha_0$, 
the arc $\beta_{2j}$ is contained in the boundary of a disk as well, and this disk is disjoint from the 
disk obtained from $D$ by point pushing along $\hat \gamma_{j-1}$.

As a consequence, the path $\beta_j$ is a projection to $F_0^+$ of a path in 
${\cal D\cal G}$ connecting $D$ to $E$. This shows that indeed, 
$d_{\cal D\cal G}(D,E)$ does not exceed $2d_{{\cal A}(F_0^+)}(\alpha_0,\zeta_0)= 
2 d_{{\cal A}(F_0^+)}(\partial D\cap F_0^+,\partial E\cap F_0^+)$.
This completes the proof of the lemma.
\end{proof}

For the formulation of the following corollary, note that the interval in the vertical boundary 
of an $I$-bundle with generator $c$ whose endpoints are the two spots defines a basepoint 
for the arc graph of the annulus $A$ with core curve $c$, and it defines
a disk $D$ enclosing the two spots. 
We call a simple closed curve
$a\subset \partial H$ \emph{untwisted} if the subsurface projection of $a$ into 
this annulus has distance at most two to this basepoint. 
Note that this subsurface projection measures the amount of twisting relative to a base arc
of an arc 
in the annulus $A$ connecting two points in the distinct
boundary components of $A$.

If $c$ is a separating $I$-bundle generator then define ${\cal D\cal G}(c)\subset {\cal D\cal G}$ to 
be the set of all disks which intersect $c$ in precisely two points. 
If $c$ is a non-separating $I$-bundle generator then let 
${\cal D\cal G}(c)\subset {\cal D\cal G}$ be the set of all disks which intersect $c$ in 
precisely two points and are disjoint from the preimage of the boundary of a M\"obius
band in the base of the $I$-bundle.

\begin{corollary}\label{pointpush}
For any $I$-bundle generator $c\subset \partial H$, the inclusion
${\cal D\cal G}(c)\to {\cal D\cal G}$ is a quasi-isometric embeddeding.
Furthermore, 
if $D,D^\prime$ are two such disks whose intersections with the annulus
$A$ with core curve $c$ are untwisted, then 
\[d_{\cal D\cal G}(D,D^\prime)\approx \max\{d_{{\cal A}(F^+)}(\Omega_1(D),\Omega_1(D^\prime)),
d_{{\cal A}(\Psi(F^+))}(\Omega_2(D),\Omega_2(D^\prime))\}\]
where $\Omega_1,\Omega_2$ are the two distinct 
coarse Lipschitz retractions ${\cal D\cal G}\to 
{\cal R\cal D}^+(c)$. 
\end{corollary}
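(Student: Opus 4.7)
The plan is to derive both assertions by combining Lemma~\ref{casewithspot}, Lemma~\ref{pointpush2}, and the commutativity of the point pushings of $p_1$ and $p_2$ along loops in the disjoint subsurfaces $F_0^+$ and $\Psi(F_0^+)$ described at the beginning of Section~3.

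For the lower bound I would invoke Lemma~\ref{casewithspot} together with Remark~\ref{notunique}: there are two coarse two-Lipschitz retractions $\Omega_1, \Omega_2 \colon {\cal D\cal G} \to {\cal R\cal D}^+(c)$, associated to $F_0^+$ and $\Psi(F_0^+)$ respectively, which intertwine under the natural isometries with the projections to the arcs $\partial D \cap F_0^+ \in {\cal A}(F^+)$ and $\partial D \cap \Psi(F_0^+) \in {\cal A}(\Psi(F^+))$. The coarse Lipschitz property then gives
\[d_{\cal D\cal G}(D,D') \gtrsim d_{{\cal A}(F^+)}(\Omega_1(D),\Omega_1(D')), \quad d_{\cal D\cal G}(D,D') \gtrsim d_{{\cal A}(\Psi(F^+))}(\Omega_2(D),\Omega_2(D')),\]
so $d_{\cal D\cal G}(D,D')$ coarsely dominates the maximum of the two arc graph distances.

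For the upper bound in the untwisted case, observe first that since each of $D, D'$ meets $c$ in exactly two points, both $\partial D \cap F_0^+$ and $\partial D \cap \Psi(F_0^+)$ are single arcs with both endpoints on $c_1$, and similarly for $D'$. Applying Corollary~\ref{arcs} to $F_0$ and $\Psi(F_0)$, I find a point pushing $\phi_1$ of $p_1$ along a loop in $F_0$ carrying the $F_0^+$-arc of $\partial D$ to that of $\partial D'$, and a point pushing $\phi_2$ of $p_2$ along a loop in $\Psi(F_0)$ performing the analogous task on the other side. Since the supports of $\phi_1, \phi_2$ lie in the disjoint subsurfaces $F_0^+, \Psi(F_0^+)$ of $\partial H$, they commute, and $E = \phi_1\phi_2 D$ has $\Omega_1(E)$ and $\Omega_2(E)$ coarsely agreeing with the retraction images of $D'$. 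The untwisted hypothesis on $D$ and $D'$ then forces $E$ and $D'$ to differ by a bounded amount of Dehn twisting about $c$, hence to be at uniformly bounded distance in ${\cal D\cal G}$. Applying Lemma~\ref{pointpush2} and its symmetric version with the roles of $p_1, p_2$ exchanged to the two factors separately, and using that $\phi_2$ fixes the $\Omega_1$-image and vice versa, the triangle inequality yields the desired upper bound by the maximum.

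The quasi-isometric embedding ${\cal D\cal G}(c) \hookrightarrow {\cal D\cal G}$ follows by a projection argument: any ${\cal D\cal G}$-geodesic joining two points of ${\cal D\cal G}(c)$ can be mapped vertex by vertex to ${\cal R\cal D}^+(c) \subset {\cal D\cal G}(c)$ via $\Omega_1$, yielding a path of comparable length whose endpoints are at bounded distance in ${\cal D\cal G}(c)$ from $D, D'$ by a direct surgery. The main obstacle I anticipate is the rigidity step in the upper bound argument, namely verifying that matching both retraction images together with the untwisted condition pins down the disk up to bounded distance; this requires translating the annular-subsurface-projection definition of untwistedness into a concrete combinatorial statement about how $\partial D$ wraps around the annulus $A$, so that the remaining ambiguity after fixing the arcs on both sides of $c$ is uniformly bounded.
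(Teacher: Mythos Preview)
Your approach is essentially the same as the paper's: lower bound from the two Lipschitz retractions of Lemma~\ref{casewithspot}, upper bound by factoring the passage from $D$ to $D'$ into commuting point pushings of $p_1$ in $F_0^+$ and $p_2$ in $\Psi(F_0^+)$ and invoking Lemma~\ref{pointpush2} on each side.

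The one place where the paper streamlines your argument is precisely the ``rigidity step'' you flag as the main obstacle. Rather than analyzing how an arbitrary disk in ${\cal D\cal G}(c)$ is pinned down by its two arc projections plus the untwisted condition, the paper first observes that the disks in ${\cal D\cal G}(c)$ which \emph{enclose the two spots} are one-dense in ${\cal D\cal G}(c)$, and then notes (via Lemma~\ref{pointpush2} and its proof) that any such untwisted enclosing disk is literally obtained from the base disk by a pair of commuting point pushings. This reduces the rigidity question to a parametrization statement rather than a reconstruction-from-projections statement, and the annular ambiguity you anticipate having to control simply does not arise. Your version would work too, but the reduction to enclosing disks is cleaner and avoids the surgery you mention for the quasi-isometric embedding: once the distance formula holds on a one-dense subset, both assertions of the corollary follow immediately.
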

\begin{proof}
Let $\Omega_1,\Omega_2:{\cal D\cal G}\to {\cal R\cal D}^+(c)$ 
be the two coarse Lipschitz retractions defined by the 
choices of the surfaces $F_0^+,\Psi(F_0^+)\subset \partial H$ as before.  
Let $D\in {\cal D\cal G}(c)$ be untwisted. 
We observed in Lemma \ref{pointpush2} and its proof that 
any disk $D_1\in {\cal R\cal D}^+(c)$ whose intersection with the annulus
$A$ is untwisted (or, more generally, untwisted with respect to $D$) and 
which encloses the two spots can be obtained
from $D$ by 
point pushing the point $p_1$ along a loop $\alpha$ in $F_0^+$ and point pushing 
$p_2=\Psi(p_1)$ along a loop $\beta$ in $\Psi(F_0^+)$. 
Since these point pushing operations clearly commute,
the disk $D_1$ does not depend on the order in which these
point pushing transformations are carried out. 

By Lemma \ref{casewithspot}, the distance between
$D_1$ and $D$ is proportional to the maximum of the distance in the arc graph 
${\cal A}(F^+)$ of the projections of the 
intersections $\partial D\cap F_0^+, \partial D_1\cap F_0^+\in {\cal A}(F_0^+)$ and 
$\partial D\cap \Psi(F_0^+),\partial D_1\cap \Psi(F_0^+)\in {\cal A}(\Psi(F_0^+))$.
As the set of disks enclosing the two spots is one-dense in ${\cal D\cal G}(c)$, 
this yields the statement of the corollary.
\end{proof}

\begin{corollary}\label{firststep}
\begin{enumerate}
\item 
If $g=2h$ is even then 
${\cal D\cal G}$ contains quasi-isometrically embedded $\mathbb{Z}^3$.
\item If $g$ is odd then ${\cal D\cal G}$ contains 
quasi-isometrically embedded $\mathbb{Z}^2$.
\end{enumerate}
\end{corollary}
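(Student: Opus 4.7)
The plan is to exhibit the claimed quasi-isometric embeddings by taking the orbit of a fixed disk $D_0\in {\cal R\cal D}^+(c)$ under a free abelian subgroup of ${\rm Map}(H)$ generated by point pushings and a Dehn twist about an $I$-bundle generator $c$. All images will automatically lie in ${\cal D\cal G}(c)$, so by the first statement of Corollary \ref{pointpush} it suffices to control distance in ${\cal D\cal G}(c)$.

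For the even case $g=2h$ with $h\geq 1$, we pick a separating $I$-bundle generator $c$ with orientable base $F$ of genus $h$, so that the spotted subsurfaces $F_0^+$ (containing $p_1$) and $\Psi(F_0^+)$ (containing $p_2$) of $\partial H$ are disjoint. We then select filling loops $\gamma_1\subset F_0^+$ and $\gamma_2\subset \Psi(F_0^+)$ based at $p_1,p_2$ respectively, whose supports avoid a fixed annular neighborhood $A$ of $c$. By Kra's theorem, point pushing along a filling loop is pseudo-Anosov, so the corresponding maps $P_1,P_2\in {\rm Map}(H)$ act with positive translation length on the arc graphs ${\cal A}(F_0^+)$ and ${\cal A}(\Psi(F_0^+))$. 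Let $T$ denote a Dehn twist about $c$ supported in $A$. The three mapping classes have pairwise disjoint supports on $\partial H$, and therefore commute in ${\rm Map}(H)$.

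Fix a $\Psi$-invariant disk $D_0\in {\cal R\cal D}^+(c)$ enclosing the two spots and set $\phi(a,b,n)=T^n P_1^a P_2^b(D_0)$. The upper bound $d_{\cal D\cal G}(\phi(v),\phi(v'))\leq C\|v-v'\|_1$ is immediate from the triangle inequality. For the lower bound we employ three coarse Lipschitz projections from ${\cal D\cal G}(c)$: the two retractions $\Omega_1,\Omega_2:{\cal D\cal G}\to {\cal A}(F^+)$ from Lemma \ref{casewithspot} (associated to $F_0^+$ and $\Psi(F_0^+)$), and the Masur--Minsky subsurface projection $\pi_A$ to the annulus $A$. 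By Lemma \ref{pointpush2}, $\Omega_1$ moves by $\approx|a-a'|$ along the $P_1$-direction while remaining coarsely constant under $P_2$ and $T$ since their supports lie outside the interior of $F_0^+$; symmetrically $\Omega_2$ detects $|b-b'|$; and $\pi_A$ shifts by $\approx|n-n'|$ under powers of $T$ while $P_1,P_2$ fix $\pi_A$ up to bounded error. Combining the three estimates yields $d_{\cal D\cal G}(\phi(v),\phi(v'))\gtrsim \|v-v'\|_\infty$, giving the quasi-isometric embedding $\mathbb{Z}^3\hookrightarrow {\cal D\cal G}$.

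For odd $g$ only non-separating $I$-bundle generators exist. The analogous construction, using a single filling loop $\gamma_1\subset F_0^+$ based at $p_1$ (yielding a pseudo-Anosov point pushing $P_1$) together with the Dehn twist $T$ about $c$, produces two commuting mapping classes; applying $\Omega_1$ and $\pi_A$ exactly as above gives a quasi-isometric embedding of $\mathbb{Z}^2$. The main technical obstacle in both cases is verifying the mutual coarse invariance of each projection under the orthogonal generators. This reduces to arranging pairwise disjoint supports for the generating mapping classes on $\partial H$, which is achievable by careful choice of $\gamma_1,\gamma_2$, after which standard Masur--Minsky subsurface projection estimates handle the twist coordinate.
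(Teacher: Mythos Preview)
Your argument is correct and runs along the same rails as the paper's: the three coarse Lipschitz projections you use---the two retractions $\Omega_1,\Omega_2$ of Lemma~\ref{casewithspot} and the annular subsurface projection $\pi_A$---are exactly the ones the paper employs, and the reduction to ${\cal D\cal G}(c)$ via Corollary~\ref{pointpush} is the same. There are only cosmetic differences worth noting.

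First, you realise the two ``arc--graph directions'' as orbits of explicit pseudo-Anosov point pushings $P_1,P_2$ (appealing to Kra's theorem for loxodromicity on ${\cal A}(F_0^+)$), whereas the paper simply chooses an abstract bi-infinite quasi-geodesic $\alpha$ in ${\cal A}(F_0^+)$ and transports it by $\Psi$. Second, for the annulus coordinate you use the Dehn twist $T_c$, while the paper uses point pushing of one spot around $c$. Both work, but you should say one word about why $T_c\in{\rm Map}(H)$, since $c$ is not a meridian: it follows from the $I$-bundle description, via the homeomorphism $(x,t)\mapsto(\psi_t(x),t)$ of $F\times I$ where $\psi_t$ isotopes the identity to the boundary twist of $F$; this restricts to $T_c$ on $\partial H$. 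Third, for odd $g$ your two independent directions are $(\Omega_1,\pi_A)$, whereas the paper uses $(\Omega_1,\Omega_2)$ and does not invoke the annulus at all in that case. Either pair suffices for $\mathbb{Z}^2$.

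One small point of care in the odd case: since $c$ is non-separating, not every disk in ${\cal D\cal G}$ crosses $c$, so $\pi_A$ is not globally Lipschitz on ${\cal D\cal G}$. Your reduction to ${\cal D\cal G}(c)$ via Corollary~\ref{pointpush} handles this, but it is worth making explicit that disjoint disks \emph{in} ${\cal D\cal G}(c)$ have bounded $\pi_A$-distance, which is all you need.
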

\begin{proof} Let $g=2h$ be even and let 
$c\subset \partial H_0$ be a separating $I$-bundle generator. Using the above notations,
let $\alpha$ be a bi-infinite quasi-geodesic in ${\cal A}(F_0^+)$ 
starting at $\alpha(0)$ where $\alpha(0)$ is the intersection of $F_0^+$ with
the boundary of the disk $D\in {\cal R\cal D}^+(c)$.
Such quasi-geodesics exists since the arc graph ${\cal A}(F_0^+)$ is of
infinite diameter. By what we showed so far, for all $s,t$ there exists a disk 
$\zeta(s,t)$ whose projection to $F_0^+$ equals $\alpha(s)$ and whose
projection to $\Psi(F_0^+)$ is of distance at most two to $\Psi(\alpha(t))$. 
Corollary \ref{pointpush} then shows that 
\[d_{\cal D\cal G}(\zeta(s,t),\zeta(s^\prime,t^\prime))\approx \max\{s-s^\prime,t-t^\prime\}.\]
Thus the image of $\zeta:{\mathbb{Z}}^2\to {\cal D\cal G}$ is uniformly quasi-isometric
to $\mathbb{Z}^2$ equipped with the maximum norm. As this norm is quasi-isometric 
to the standard Euclidean norm, we conclude that 
${\cal D\cal G}$ contains quasi-isometrically embedded $\mathbb{Z}^2$.

Let
$A\subset \partial H_0$ be an annulus neighborhood about $c$.
For a fixed pair of 
points on the boundary of $A$ (say the marked points $p_1,p_2$) the 
curve graph ${\cal C\cal G}(A)$ of $A$  
can be identified with the set of homotopy classes of arcs 
connecting the two boundary components of $A$
where a homotopy is not allowed to 
cross through the points $p_1,p_2$. 
There exists a coarsely well defined
\emph{subsurface projection} $\Pi:{\cal C\cal G}(\partial H)\to 
{\cal C\cal G}(A)$ which maps a simple closed curve crossing through $A$ to 
a component of its intersection with $A$. Here as before, ${\cal C\cal G}(X)$ is the 
curve graph of the surface $X$.

Since $c$ is separating by assumption, 
any diskbounding simple closed curve has an essential intersection with $A$.
Hence the restriction  to ${\cal D\cal G}$ of the subsurface projection 
${\cal C\cal G}(H)\to {\cal C\cal G}(A)=\mathbb{Z}$ 
is a Lipschitz retraction into
$\mathbb{Z}$ which commutes with the two Lipschitz
retractions defined by the surfaces 
$F_0^+$ and $\Psi(F_0^+)$. 
On the other hand, iterated point pushing one of the points $p_i$ about the core curve of the annulus
keeping the second point fixed shows that the image of this projection is all of $\mathbb{Z}$.
Together this shows that there are embedded 
$\mathbb{Z}^3$ in ${\cal D\cal G}$.

Now let $g$ be odd. By Corollary \ref{pointpush} and the 
discussion in the beginning of this proof, we only have to
observe once more that
the diameter of ${\cal A}(F_0^+)\subset {\cal A}(F^+)$
is infinite, which is well known (see \cite{MS10}). 
\end{proof}

\section{Free splittings and sphere graphs}\label{freesplittings} 

This section is devoted to the proof of Theorem \ref{spheregraph}. 
We begin with looking again at a handlebody $H$ of genus
$g\geq 2$ with two spots $p_1,p_2$ on the
boundary. Let $H_0$ be the handlebody of genus $g$ without spots and let
$\Phi:H\to H_0$ be the spot removing map. Recall that 
a disk $D$ in $H$ encloses the two spots
$p_1,p_2$ if $\Phi(D)\subset H_0$ is homotopic to a point. 

We next use the two spots to add a handle to $H$. The resulting
manifold is a handlebody $H^\prime$ of genus $g+1$ with one spot. 
To this end slightly enlarge the two spots $p_1,p_2$ 
to two small compact disjoint disks $B_1,B_2$ in $\partial H$
with $p_i\in \partial B_i$. Identifying these two disks 
with an orientation reversing diffeomorphism $B_1\to B_2$ 
which maps $p_1$ to $p_2$ yields 
a handlebody $H^\prime$ of genus $g+1$. We may view the common image
of the points $p_1,p_2$ as a spot $p\in \partial H^\prime$.
The fundamental group of $H^\prime$ 
is the free group $\mathfrak{F}_{g+1}$ with $g+1$ 
generators. We choose the spot $p$ 
of  $H^\prime$ as the basepoint 
for the fundamental group of $H^\prime$.

The following simple observation will be used several times later on.

\begin{lemma}\label{disksplit}
A disk $D$ in $H$ which encloses the two spots $p_1,p_2$ and the choice
of one of the spots $p_i$ 
determines a free 
splitting $\pi_1(H^\prime,p)=\mathfrak{F}_{g+1}=
\mathfrak{F}_g*\mathbb{Z}$. Changing the spot 
changes the splitting by conjugation with a generator of the 
$\mathbb{Z}$-factor.  
\end{lemma}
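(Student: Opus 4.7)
The plan is to use the disk $D$ as a splitting disk for $H^\prime$ and apply the Seifert--van Kampen theorem; the two choices of spot will correspond to two homotopy classes of paths used to transport the resulting splitting from an auxiliary basepoint on $D$ to the marked basepoint $p$.

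Since $D$ encloses the two spots, its image in $H_0$ is nullhomotopic, so $D$ bounds an embedded $3$-ball $B \subset H$ whose boundary meets $\partial H$ in a twice-punctured disk containing both $p_1$ and $p_2$. In particular, the identification disks $B_1, B_2$ lie in $\partial B$. The closure $X$ of $H \setminus B$ is a handlebody of genus $g$, while the image $Y$ of $B$ in $H^\prime$ is a $3$-ball with two disjoint boundary disks identified, hence a solid torus; and $D$ persists as a properly embedded disk in $H^\prime$ separating it into $X$ and $Y$. Choosing an auxiliary basepoint $q \in D$, Seifert--van Kampen (with $D$ simply connected) yields
\[
\pi_1(H^\prime, q) \;=\; \pi_1(X, q) * \pi_1(Y, q) \;\cong\; \mathfrak{F}_g * \mathbb{Z}.
\]
Transporting this splitting to the marked basepoint $p \in \partial Y \subset \partial H^\prime$ requires a choice of path $\gamma \subset Y$ from $p$ to $q$; conjugation by $\gamma$ then produces the corresponding splitting of $\pi_1(H^\prime, p)$.

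The dependence on the spot appears precisely in this last step. The point $p$ is the common image of the two distinct points $p_1 \in \partial B_1$ and $p_2 \in \partial B_2$, and each choice of $p_i$ determines a natural path $\gamma_i$ inside $B \subset Y$ from $p$ to $q$ that approaches $p$ through the corresponding identified disk. The loop $\gamma_1 \cdot \bar\gamma_2$ at $p$ crosses through the identified handle of $Y$ exactly once and therefore represents a generator $t$ of $\pi_1(Y, p) = \mathbb{Z}$. The two basepoint-transport isomorphisms associated with $\gamma_1$ and $\gamma_2$ thus differ by conjugation by $t$, so the two resulting splittings of $\pi_1(H^\prime, p)$ differ by conjugation by a generator of the $\mathbb{Z}$-factor, as claimed. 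The only delicate point is the identification of $\gamma_1 \cdot \bar\gamma_2$ as a generator of $\pi_1(Y)$, which is immediate from the description of $Y$ as a $3$-ball with a $1$-handle attached along $B_1 \cup B_2$.
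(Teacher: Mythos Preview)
Your proof is correct and takes essentially the same approach as the paper: both realize $D$ as a separating disk in $H^\prime$ cutting it into a genus-$g$ handlebody and a solid torus containing $p$, apply van Kampen, and then analyze the basepoint ambiguity inside the solid torus. Your treatment via the two transport paths $\gamma_1,\gamma_2$ is in fact more explicit than the paper's about why the two spot choices differ by exactly a generator of the $\mathbb{Z}$-factor; conversely, the paper spells out (and you leave implicit, though it follows from simple connectivity of $B$) why moving the auxiliary disks $B_1,B_2$ within $\tilde D$ does not affect the splitting once a spot is fixed.
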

\begin{proof}
Up to isotopy, we may assume
that the disk $D$ is disjoint from 
the two closed disks $B_1$ and $B_2$ 
used in the construction of $H^\prime$. Thus
$D$ determines a separating disk $D^\prime$
in $H^\prime$ which only depends on $D$. 
This disk cuts $H^\prime$ into a handlebody of genus
$g$ with fundamental group $\mathfrak{F}_g$ and a solid torus $T$
with fundamental group $\mathbb{Z}$ which 
contains the basepoint $p$.

Van Kampen's theorem now shows that $D^\prime$ 
defines a free splitting 
\[\pi_1(H^\prime,p)=\mathfrak{F}_{g+1}=
\mathfrak{F}_g*\mathbb{Z},\] 
unique up to conjugation with an element of the free factor $\mathbb{Z}$.
Namely, 
the basepoint $p$ is contained in the solid torus $T$.  
Thus the splitting of $\pi_1(H^\prime,p)$ obtained by
van Kampen's theorem is determined by $D^\prime$ up to conjugation with
an element of $\pi_1(T)$. 

To see that if we fix one of the spots $p_i$ then 
we obtain in fact a uniquely determined splitting, it suffices
to observe that the solid torus $T$ is obtained by identifying two disks in 
the boundary of a ball. This ball is fixed, but the disks are allowed to 
move within a fixed subdisk $D$ of this boundary. 
As a disk is contractible, moving the two disks $B_1,B_2$ 
freely in $D$ 
gives rise to the same splitting and hence there is no ambiguity in the 
construction (in other words, the fundamental group of the solid torus $T$ appears only
after the gluing). 
\end{proof}

The construction in Lemma \ref{disksplit} can be reversed. Namely, observe that
the handlebody $H^\prime$ contains a distinguished non-separating disk $V$ which is 
the image of the two disks in $\partial H$ used in the construction. The spot of $H^\prime$
is contained in the boundary of $V$.
If $E\subset H^\prime$
is any separating disk disjoint from $V$ which decomposes $H^\prime$ into
a solid torus $T\supset V$ and a handlebody of genus $g$, then 
$E$ is the image of a disk in $H$ enclosing the two spots 
under the gluing construction.

\begin{remark}\label{notinjective}
By Lemma \ref{disksplit},
each disk $D$ in $H$ enclosing the two spots defines a
free splitting $\mathfrak{F}_{g+1}=
\mathfrak{F}_g*\mathbb{Z}$. Here the 
$\mathfrak{F}_g$-factor in the free product is identified with the 
fundamental group $\pi_1(H\cup \{p_2\},p_2)$. 
Lemma \ref{pointpush1} immediately implies the following. 
Let $a\in \mathfrak{F}_{g+1}$ be the generator
of the $\mathbb{Z}$-factor in the 
free splitting of $\mathfrak{F}_{g+1}$ 
defined by the disk $D$ and the choice of the basepoint $p_1$, 
where $a$ is 
viewed as a homotopy class with fixed endpoints of an arc connecting 
$p_1$ to $p_2$. Let $E$ be the image of $D$ by point pushing $p_2$ along
a loop in the homotopy class $q(D,E)\in \pi_1(\partial H_0,p_2)$ defined as the 
class of the concatenation of an arc $\alpha$ connecting $p_2$ to $p_1$ which is 
disjoint from $D$ with an arc $\beta$ connecting $p_2$ to $p_1$ which is disjoint 
from $E$ (compare 
Lemma \ref{pointpush1}). Via the inclusion $\partial H_0\to H_0$, 
this homotopy class  defines
a homotopy class $\iota_*q(D,E)\in \pi_1(H\cup \{p_2\},p_2)=\pi_1(H_0,p_2)$
(here the last equation is the identification of fundamental groups under the
spot closing map). Then the 
$\mathbb{Z}$-factor defined by $p_1$ and the disk $E$ is generated by 
$a\cdot \iota_*q(D,E)$ (read from left to right).
\end{remark}

From now on we fix a disk $D$ enclosing the two spots in $H$ which is a thickening of 
an interval in an $I$-bundle over a compact surface $F$ with connected boundary.
If the genus of $H$ is even then we assume that $F$ is orientable. 
This disk defines a 
free splitting $\mathfrak{F}_{g+1}=\mathfrak{F}_g*\mathbb{Z}$ where
the free factor $\mathbb{Z}$ is generated by an element $a$ 
obtained from an embedded oriented arc in the twice spotted
disk $\tilde D$ in $\partial H$ 
with the same boundary as $D$ 
which connects $p_1$ to $p_2$. 

Double the handlebody $H$ to a connected sum $M$ 
of $g$ copies of $S^1\times S^2$ with two spots. 
This defines an embedding $\mathfrak{J}: H\to M$. Any disk $E$ in 
$H$ doubles to an essential sphere $\Pi(E)\subset M$, and $E$ encloses the two
spots if and only if this holds true for $\Pi(E)$.  
Furthermore, disjoint disks give rise to disjoint spheres. Hence the doubling map 
$\Pi:{\cal D\cal G}\to {\cal S\cal G}$ is simplicial, where ${\cal S\cal G}$ 
is the sphere graph of $M$. 
The following observation shows that the map $\Pi$ is not bilipschitz onto its image.

\begin{lemma}\label{projectrd}
For each $I$-bundle generator $c\subset \partial H$, 
the image of the subgraph ${\cal R\cal D}^+(c)$ under the map $\Pi$
has diameter at most two   
in the sphere graph ${\cal S\cal G}$.
\end{lemma}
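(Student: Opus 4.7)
The plan is to exhibit a single reference sphere $S \in {\cal S\cal G}$ such that for every $D \in {\cal R\cal D}^+(c)$ the sphere $\Pi(D)$ admits a representative disjoint from $S$ in $M$. This gives $d_{\cal S\cal G}(\Pi(D),S) \leq 1$ for every $D \in {\cal R\cal D}^+(c)$, and hence the diameter of $\Pi({\cal R\cal D}^+(c))$ in ${\cal S\cal G}$ is at most $2$.

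First, I would use Lemma \ref{casewithspot} to describe each $D \in {\cal R\cal D}^+(c)$ as the $I$-bundle over an arc $\alpha \subset F^+$ with both endpoints on $\partial F^+$; in the orientable trivialization $H = F \times I$ one has $D = \alpha \times I$, and its double is the sphere $\Pi(D) = D \cup_{\partial D} D' \subset M = H \cup_{\partial H} H'$, invariant under the extension of $\Psi$ to $M$.

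For the reference sphere I take $S = \Pi(D_*)$, where $D_* \in {\cal R\cal D}^+(c)$ is the $I$-bundle over an arc $\alpha_* \subset F^+$ chosen as a small boundary-parallel loop enclosing the spot of $F^+$. Then $S$ is an essential sphere in $M$ enclosing the two spots $p_1, p_2$.

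The main step is to show that every $\Pi(D)$ with $D \in {\cal R\cal D}^+(c)$ can be isotoped in $M$ off $S$. When $\alpha$ is isotopic in $F^+$ to an arc disjoint from $\alpha_*$, the disks $D, D_*$ are disjoint in $H$ and so $\Pi(D), S$ are disjoint in $M$ directly. Otherwise, after tightening, $\alpha \cap \alpha_* = \{q_1, \ldots, q_k\}$ consists of interior points of $F$, and $\Pi(D) \cap S = \bigsqcup_{i=1}^{k} \{q_i\} \times S^1$, where each circle is the double of the interior fiber $\{q_i\} \times I$ of the $I$-bundle. Each intersection circle bounds a disk on each of the two spheres; the crucial claim is that each such pair of disks cobounds a $3$-ball in $M$ obtained from the local product structure of the $I$-bundle near $q_i$, so an isotopy across this ball eliminates the intersection circle, and iterating the surgery yields the desired disjoint representative of $\Pi(D)$.

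The main technical obstacle is verifying the existence of these surgery $3$-balls in the reducible manifold $M = \sharp_g S^2 \times S^1$ with two spots, where not every embedded sphere bounds a ball. The resolution is local: a small disk neighborhood $B \subset F$ centered at $q_i$ yields a product $3$-ball $B \times I \subset H$ whose double is a solid torus $B \times S^1 \subset M$, and within this solid torus the required surgery $3$-ball can be realized as a meridian ball compatible with the restrictions of $\Pi(D)$ and $S$ to the neighborhood. The non-orientable case is handled by the same surgery argument after passing to the orientation double cover discussed in the proof of Lemma \ref{casewithspot}.
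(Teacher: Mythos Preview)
Your overall strategy --- exhibit a single reference sphere $S$ at distance at most one from every $\Pi(D)$ --- is sound, and your choice $S=\Pi(D_*)$ with $D_*\in{\cal R\cal D}^+(c)$ enclosing the two spots is exactly the right object. The problem lies in the surgery step.

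The claim that the intersection circle $C_i=\{q_i\}\times S^1$ bounds disks on $\Pi(D)$ and on $S$ which cobound a $3$--ball ``obtained from the local product structure of the $I$--bundle near $q_i$'' is not correct as stated. Inside the solid torus $B\times S^1$ (the double of $B\times I$) the restrictions of both $\Pi(D)$ and $S$ are \emph{annuli} $\alpha\vert_B\times S^1$ and $\alpha_*\vert_B\times S^1$, not disks; the circle $C_i$ is the core of this solid torus and neither annulus bounds anything there. The disks you need on the two spheres are global, and there is no reason for a local ``meridian ball'' to be bounded by them. In $M=\sharp_g S^1\times S^2$ with two spots this is a real issue: innermost--disk surgeries on essential spheres can produce essential spheres rather than balls, and even when a ball exists one has to argue it avoids the spots. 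None of this is supplied.

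The paper avoids surgery entirely. It first notes that every disk in ${\cal R\cal D}^+(c)$ is adjacent to a disk in ${\cal R\cal D}^+(c)$ which encloses the two spots, so it suffices to show that \emph{all} such enclosing disks map under $\Pi$ to one and the same sphere. For this it uses the description from Section~\ref{disksenclosing}: a spot--enclosing disk $E\in{\cal R\cal D}^+(c)$ is obtained from the base disk $D$ by point pushing $p_2$ along a loop $\zeta\subset F_0$ and $p_1$ along $\Psi(\zeta)\subset\Psi(F_0)$. The key algebraic fact is that the involution $\Psi$ acts as the \emph{identity} on $\pi_1(H)$ (taken at a $\Psi$--fixed point), so the arc in $M$ determining $\Pi(E)$ is homotopic rel endpoints to the arc determining $\Pi(D)$; hence $\Pi(E)=\Pi(D)$ as spheres. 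This is the missing idea in your argument: without invoking the triviality of $\Psi$ on $\pi_1$, there is no mechanism forcing the intersections of $\Pi(D)$ with $S$ to be inessential in the spotted manifold.
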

\begin{proof} As any disk in ${\cal R\cal D}^+(c)$ is at distance one
from a disk $E\in {\cal R\cal D}^+(c)$ enclosing the two spots, 
it suffices to show that two disks $D,E\in {\cal R\cal D}^+(c)$ enclosing the two spots
are mapped by $\Pi$ to the same sphere enclosing the two spots. 

A disk $E\in {\cal R\cal D}^+(c)$ enclosing the two spots is 
the image of the base disk $D$ by point pushing the point $p_2$ along
 a based loop $\zeta\in \pi_1(F_0,p_2)$,
 followed by point pushing $p_1$ along the based loop 
 $\Psi(\zeta)\in \pi_1(\Psi(F_0),p_1)$. 
 
 Let $\alpha$ be an arc connecting 
 $p_1$ to $p_2$ which is disjoint from $D$.
 Lemma \ref{pointpush1},
 applied to both the point pushing of $p_1$ and of $p_2$, shows that up to 
 an ambiguity arising from clearing intersections with the spots,  an arc 
 connecting $p_1$ to $p_2$ which is disjoint from $E$ 
 is homotopic with fixed endpoints
 to $\Psi(\zeta)^{-1} \circ \alpha \circ \zeta$ (read from left to right).
 
Note that the reflection $\Psi$ acts as the 
identity on the fundamental group of $H$, taken at a fixed point for $\Psi$. 
Thus the loop obtained by connecting
the fixed point $q$ for $\Psi$ in $\alpha$ to $p_2$, concatenating with 
$\zeta$ and going back to $q$ along $\alpha$ is homotopic to 
its image under $\Psi$. Now a sphere in the doubled handlebody $M$ 
enclosing the two spots is the boundary of the thickening of an arc connecting
the two spots. By the above discussion, the arc defining the sphere $\Pi(E)$ 
is homotopic to the arc which defines the sphere $\Pi(D)$.
Informally, a sphere $S$ enclosing the two spots $p_1,p_2$ determines uniquely
an isomorphism of the fundamental group of $M$ based at $p_1$ with 
the fundamental group of $M$ based at $p_2$ by connecting $p_1$ to $p_2$ with 
an arc not crossing through $S$. With this identification, point pushing $S$ along
loops at $p_1,p_2$ defining the same element in the fundamental group of $M$ 
gives rise to the same sphere.
Note that the loops $\zeta$ and $\alpha^{-1}\circ \psi(\zeta)\circ \alpha$ 
are contained in the boundary $\partial H$ of 
the handlebody, and they are not homotopic as elements of the fundamental group 
of $\partial H$.  
This shows the  lemma.
\end{proof}


We shall construct spheres as doubles of disks in the handlebody $H$ and keep track of
distances using Lemma \ref{projectrd}. Let again $D$ be a disk 
which is the thickening of an interval
in the $I$-bundle defined by the $I$-bundle generator $c$. It defines the generator 
$a$ of the $\mathbb{Z}$-factor of the splitting 
$\mathfrak{F}_{g+1}=\mathfrak{F}_g*\mathbb{Z}$ defined by $D$.

Let $d_{\cal S\cal G}$ be the distance in the sphere graph. For ease of bookkeeping, 
define a function $d_{\widehat{\cal D\cal G}}$ on pairs of disks in 
$H$ by 
%
\[d_{\widehat{\cal D\cal G}}(E,F)=d_{\cal S\cal G}(\Pi(E),\Pi(F)).\]
Note that $d_{\widehat{\cal D\cal G}}$ is symmetric and fulfills the triangle inequality, but 
by Lemma \ref{projectrd} and by Lemma \ref{casewithspot}, it is 
\emph{not} comparable to the distance
on ${\cal D\cal G}$. 
Furthermore, we have
$d_{\widehat{\cal D\cal G}}(E,F)\leq d_{\cal D\cal G}(E,F)$ for all disks $E,F$ since the 
map $\Pi$ is one-Lipschitz.

Recall that point pushing one of the two spots in $H$ is a diffeomorphism of $H$ which 
extends to a diffeomorphism of $M$ and hence induces an isometry on the sphere graph 
${\cal S\cal G}$. In other words, such a map preserves the function 
$d_{\widehat{\cal D\cal G}}$. Informally, we say that such a map is an isometry for 
$d_{\widehat{\cal D\cal G}}$.

Consider as before an embedded 
subsurface $F_0$ of $\partial H_0$ determined by an 
$I$-bundle generator $c$. If $g$ is even then we assume that 
$c$ is separating and $F_0$ is isotopic to a component of $\partial H_0-c$. 
If $g$ is odd then $F_0$ is a component of the complement of the preimage
of the core curve of a M\"obius band in the orientation cover of the 
non-orientable base of a non-separating $I$-bundle generator $c\subset \partial H_0$.
Let $\Psi$ be the orientation reversing involution of $H_0$ determined by the $I$-bundle generator
$c$. Let $p_2\in F_0$ be a point in the interior of $F_0$ and let $p_1=\Psi(p_2)$. 
Let $D \in {\cal R\cal D}^+(c)$ be a disk isotopic to the thickening of 
a fiber arc of the $I$-bundle with endpoints $p_1,p_2$, thought of as being 
constructed by slightly pushing a point on $\partial F_0$ inside $F_0$.

 Let us denote by $[a,u]_2$ the disk obtained from $D$ by point pushing $p_2$ along a loop
$\gamma$ based at $p_2$ in the surface $F_0$, 
in the homotopy class $u\in \pi_1(F_0,p_2)$, and by $[a^{-1},u]_1$ the disk obtained from 
$D$ by point pushing $p_1$ along the based loop $\Psi(\gamma)\subset \Psi(F)$
in the homotopy class $\Psi(u)$. 
With this
notation, for all $u\in \pi_1(F_0,p_2)$ we have 
\begin{equation}\label{pointcommute}
d_{\widehat{\cal D\cal G}}([a,u]_2,[a^{-1},u^{-1}]_1)\leq 2.\end{equation}
Namely, point pushing 
$p_1$ along the loop $\Psi(\gamma)$ 
is an isometry for $d_{\widehat{\cal D\cal G}}$,  and the
image of the disk $[a,u]_2$ under this point pushing map 
is a disk which is invariant under $\Psi$ and hence contained in the
subspace ${\cal R\cal D}^+(c)$. The image of this space under the 
projection map $\Pi$ has diameter two by Lemma \ref{projectrd}.

Now if we write composition from left to right, then as point pushing along based loops at $p_1$ 
preserves $d_{\widehat{\cal D\cal G}}$, 
for homotopy classes $b_1,b_2,c\in \pi_1(F_0,p_2)$ we obtain from (\ref{pointcommute}) that 
\begin{align}\label{crucial2}
d_{\widehat{\cal D\cal G}}([a,c\cdot b_1]_2,[a,c\cdot b_2]_2)\leq 
d_{\widehat{\cal D\cal G}}([a^{-1},b_1^{-1}\cdot c^{-1}]_1,[a^{-1},b_2^{-1}\cdot c^{-1}]_1)+4\\
=d_{\widehat{\cal D\cal G}}([a^{-1},b_1^{-1}]_1,[a^{-1},b_2^{-1}]_1)+4
\leq d_{\widehat{\cal D\cal G}}([a,b_1]_2,[a,b_2]_2)+8.\notag\end{align}


The free factor $\mathfrak{F}_g$ in the free splitting $\mathfrak{F}_{g+1}=\mathfrak{F}_g*\mathbb{Z}$ 
defined by a disk enclosing the two spots 
is naturally isomorphic to 
$\pi_1(H\cup p_2,p_2)$. Thus a free basis 
${\cal A}=\{a_1,\dots,a_g\}$ of $\mathfrak{F}_{g}=\pi_1(H\cup p_2,p_2)$ extends to a free basis
$\hat {\cal A}=\{a_1,\dots,a_g,a\}$ 
of $\mathfrak{F}_{g+1}$.

We now use a device from \cite{SS14}. 
Define the \emph{Whitehead graph} $\Gamma_{\cal A}(x)$ of 
a word $x\in \mathfrak{F}_g$ in a free basis ${\cal A}\cup 
{\cal A}^{-1}$ of $\mathfrak{F}_g$ as follows.
The set of vertices of $\Gamma_{{\cal A}}(x)$ is
identified with the set ${\cal A}\cup {\cal A}^{-1}$.
Each pair of consecutive letters $a_ia_j$ in the 
word $x$ contributes one edge 
from the vertex $a_i$ to the vertex $a_j^{-1}$. Thus if the 
length of $x$ equals $n$ then $\Gamma_{\cal A}(x)$ has $n-1$ edges,
and $\Gamma_{\cal A}(x)$ has a cut vertex if $x\in {\cal A}$. 
Furthermore, if $\Gamma_{\cal A}(x)$ has a cut vertex, 
then the same holds true for the unique reduced word
which defines the same element of $\mathfrak{F}_g$ as $x$. 

Following \cite{SS14}, 
define the \emph{simple $g+1$-length} 
\[\vert w\vert_{g+1}^{simple}\] 
of any reduced word $w$ in the free basis 
${\cal A}=\{a_1,\dots,a_g\}$ of $\mathfrak{F}_g$ 
to be the greatest number $t$ 
such that $w$ is of the form $w_1w_2\cdots w_t$ where the
Whitehead graph of $w_j$ with respect to the basis ${\cal A}$ 
has no cut vertex for each $j=1,\dots,t$. If the Whitehead
graph of $w$ has a cut vertex then the simple $g+1$-length
of $w$ is defined to be zero. We have that $\vert w\vert_{g+1}^{simple}$
is bounded from above by the word length of 
the reduced word $w$ with respect to the
basis ${\cal A}$. Furthermore, $\vert w^{-1}\vert_{g+1}^{simple}=
\vert w\vert_{g+1}^{simple}$.
The terminology here is taken
from \cite{SS14} although it is not well adapted to the situation at
hand. 

The following statement combines Lemma 4.6 and Lemma 4.7 of 
\cite{SS14},

\begin{lemma}\label{sabalka}
\begin{enumerate}
\item $\vert u\vert_{g+1}^{simple}\geq \vert v\vert_{g+1}^{simple}$
whenever $v$ is a subword of $u$.
\item 
\[\vert w\vert_{g+1}^{simple}\leq \vert u\vert_{g+1}^{simple}+\vert v\vert_{g+1}^{simple} +1\]
if $u,v$ are freely reduced words in the letters ${\cal A}\cup {\cal A}^{-1}$ and $w=uv$.
\end{enumerate}
\end{lemma}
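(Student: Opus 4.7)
The plan is to base both parts on a single monotonicity principle for the Whitehead graph. Since $\Gamma_{\cal A}(\cdot)$ always has the same vertex set ${\cal A}\cup {\cal A}^{-1}$ and each consecutive pair in a word contributes one edge, whenever $v$ appears as a contiguous subword of $u$ one has an edge containment $\Gamma_{\cal A}(v)\subseteq \Gamma_{\cal A}(u)$ on this fixed vertex set. Under the convention implicit in the paper---in which the Whitehead graph of a single letter is already declared to have a cut vertex, so that ``no cut vertex'' is equivalent to $2$-vertex-connectedness of the graph on all of ${\cal A}\cup {\cal A}^{-1}$---adding edges can only preserve this property, since $2$-connectedness is characterised by the existence of two internally disjoint paths between any pair of vertices and adding edges only produces further paths. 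I would therefore first record the preliminary claim: \emph{if $\Gamma_{\cal A}(v)$ has no cut vertex and $v$ is a subword of $u$, then $\Gamma_{\cal A}(u)$ has no cut vertex}.

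Granted this, part (1) is a bookkeeping step. Let $t=|v|_{g+1}^{simple}$; if $t=0$ the inequality is vacuous, so fix a decomposition $v=v_1\cdots v_t$ with $\Gamma_{\cal A}(v_j)$ having no cut vertex for every $j$, and write $u=pvq$. When $t=1$, take the trivial one-piece decomposition $u=(pv_1q)$; when $t\geq 2$, absorb the prefix and suffix into the outer pieces by setting $u_1=pv_1$, $u_j=v_j$ for $2\leq j\leq t-1$, and $u_t=v_tq$. In every case each $u_j$ contains $v_j$ as a subword, so the preliminary claim guarantees that $\Gamma_{\cal A}(u_j)$ has no cut vertex, hence $|u|_{g+1}^{simple}\geq t$.

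For part (2), let $s=|w|_{g+1}^{simple}$ and fix a realising decomposition $w=w_1\cdots w_s$. The boundary between $u$ and $v$ in the concatenation $w=uv$ either falls between two consecutive pieces $w_k$ and $w_{k+1}$, or it lies strictly inside a single piece $w_k$. In the first case $u=w_1\cdots w_k$ and $v=w_{k+1}\cdots w_s$ are themselves valid decompositions, giving $|u|_{g+1}^{simple}+|v|_{g+1}^{simple}\geq s$. In the second case $w_1\cdots w_{k-1}$ is a subword of $u$ and $w_{k+1}\cdots w_s$ is a subword of $v$, so part (1) yields $|u|_{g+1}^{simple}\geq k-1$ and $|v|_{g+1}^{simple}\geq s-k$, hence $|u|_{g+1}^{simple}+|v|_{g+1}^{simple}\geq s-1$; the ``$+1$'' in the statement exactly accounts for the single piece $w_k$ that is split by the $uv$ boundary. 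The main obstacle, and essentially the only non-trivial ingredient, is pinning down the correct reading of ``no cut vertex'' so as to make the monotonicity step legitimate; once that convention is fixed to mean $2$-connectedness of the full Whitehead graph on ${\cal A}\cup {\cal A}^{-1}$, everything reduces to elementary combinatorics of words.
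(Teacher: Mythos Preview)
Your arguments for both parts are the standard ones and coincide with what the paper is citing from \cite{SS14}; the monotonicity principle for the Whitehead graph is exactly the right tool, and your interpretation of ``no cut vertex'' as $2$-connectedness on the full vertex set ${\cal A}\cup{\cal A}^{-1}$ is the intended one (this is what forces a single letter to ``have a cut vertex'').

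There is one point where your proof of part (2) is narrower than what the paper actually establishes and later uses. You treat $w=uv$ as a literal concatenation of reduced words, so that the $u/v$ boundary sits at a well-defined position inside $w$. The paper, however, means the statement to hold when $w$ is the \emph{freely reduced} word representing $uv$, allowing cancellation between the end of $u$ and the beginning of $v$; this is the form needed in the proof of Lemma~\ref{simplelength}, where one iteratively multiplies elements of $\mathfrak{F}_g$. The paper's proof therefore supplies the extra step: if $u=u'c$, $v=c^{-1}v'$ with $w=u'v'$ reduced, then your argument gives $\vert w\vert_{g+1}^{simple}\leq \vert u'\vert_{g+1}^{simple}+\vert v'\vert_{g+1}^{simple}+1$, and part~(1) applied to the subwords $u'\subset u$ and $v'\subset v$ finishes. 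This is a one-line addition to your proof, but without it the lemma as applied later would not be covered.
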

\begin{proof}
The statement of Lemma 4.7 of \cite{SS14} shows the second part of the lemma only in the
case that $w=uv$ is freely reduced. To show that it is true as stated, assume that 
$\vert v\vert_{g+1}^{simple}=0$ and that $w$ is the reduced word representing $uv$.
Then $w$ is obtained from $uv$ by 
erasing some letters at the end of $u$ and the beginning of $v$. In particular, 
by the first part of the lemma, the 
Whitehead graph of the subword of $v$ 
which is contained in $w$ has a cut vertex. 

As a consequence,  if 
$w=w_1\cdots w_t$ where the Whitehead graph of 
$w_i$ does not have a cut vertex, then as $u$ is reduced, 
$w_1\cdots w_{t-1}$ is a subword of $u$. Then 
$t-1\leq \vert u\vert_{g+1}^{simple}$ by the first part of the lemma and hence
$\vert w\vert_{g+1}^{simple}\leq \vert u\vert_{g+1}^{simple}+1$ as claimed.

The general case follows from a rather straightforward modification of this argument
and will be omitted. Only the case that $\vert v\vert_{g+1}^{simple}=0$ is used in the sequel.
\end{proof}

The next lemma relates simple $g+1$-length to the sphere graph
${\cal S\cal G}$ of $M$. To simplify the notation,  
in the sequel we call a sequence $(S_i)$ of spheres in $M$
a \emph{path} in ${\cal S\cal G}$ if for all $i$ the sphere $S_i$ is disjoint
from $S_{i+1}$. Thus such a sequence is the set of integral points on 
a simplicial path in ${\cal S\cal G}$ connecting its endpoints.
Recall from Lemma \ref{pointpush1} and its analogue
for spheres that an ordered pair $(S,U)$ of spheres enclosing 
the two spots $p_1,p_2$ 
determines uniquely an element $b(S,U)\in \pi_1(M,p_2)$, which is 
represented by the concatenation of an arc connecting $p_2$ to 
$p_1$ not crossing through $S$ with an arc connecting 
$p_1$ to $p_2$ not crossing through $U$.

\begin{lemma}\label{simplelength}
Let $(S_i)_{0\leq i\leq n}$ be a path in ${\cal S\cal G}$ which begins
and ends with a sphere enclosing the two spots $p_1,p_2$.
Let $w= b(S_0,S_n)\in \pi_1(M,p_2)$;
then 
\[ \vert w\vert_{g+1}^{simple}\leq 2n.\]
\end{lemma}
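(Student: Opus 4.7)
The plan is to induct on $n$, using Lemma \ref{sabalka}(2) as the workhorse for concatenation and treating the base case $n=1$ as the crucial topological input. The constant $2$ in the bound $2n$ is slack that comes from two sources: the $+1$ subadditive error in Lemma \ref{sabalka}(2), and the need to handle intermediate spheres that do not enclose the two spots.

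For the base case $n=1$, I would analyze two disjoint spheres $S_0,S_1$ in $M$ both enclosing the two spots. The claim I would aim for is $|b(S_0,S_1)|_{g+1}^{simple}\leq 1$. Disjoint enclosing spheres cobound a submanifold $R\subset M$ whose interior is disjoint from both spots; capping off the boundary spheres expresses $\pi_1(M)=\mathfrak{F}_g$ as a free product whose factors correspond to the components of $M\setminus(S_0\cup S_1)$. The arcs from $p_2$ to $p_1$ avoiding $S_0$ and from $p_1$ to $p_2$ avoiding $S_1$ can be chosen so that their concatenation represents (after a free homotopy) an element contained in a proper free factor. In particular there is a basis of $\mathfrak{F}_g$ whose Whitehead graph for this element omits at least one generator-pair entirely, producing a cut vertex, so the simple $g+1$-length is at most $1$.

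For the inductive step I would first address the easier case in which some intermediate $S_i$, $0<i<n$, also encloses the two spots. Then $b(S_0,S_n)$ factors as the concatenation $b(S_0,S_i)\cdot b(S_i,S_n)$ of arcs, and the inductive hypothesis gives $|b(S_0,S_i)|_{g+1}^{simple}\leq 2i$ and $|b(S_i,S_n)|_{g+1}^{simple}\leq 2(n-i)$. Lemma \ref{sabalka}(2) applied to the freely reduced product yields the desired estimate, absorbing the $+1$ from the subadditivity within the slack between $2n-1$ and $2n$.

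The main obstacle, which I expect to be the heart of the argument, is the case where no intermediate sphere in the path encloses the two spots. Here the factorization trick above is unavailable, so I would modify the path by inserting, near each non-enclosing $S_i$, an enclosing sphere $S_i'$ that is disjoint from both $S_{i-1}$ and $S_{i+1}$; this step is analogous in spirit to Lemma \ref{projectrd} (a neighborhood of any sphere in ${\cal S\cal G}$ contains an enclosing sphere in a controlled way) and uses that an essential sphere system in $M$ can always be enlarged by an enclosing sphere when both spots lie in a common complementary component. Each such insertion increases the path length by one, so the resulting path through enclosing spheres has length at most $2n$; applying the easier inductive step to this new path, where every sphere encloses the two spots, gives the required bound $|w|_{g+1}^{simple}\leq 2n$.
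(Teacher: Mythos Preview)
Your proposal has a fundamental gap rooted in a topological fact you have overlooked: any two \emph{distinct} spheres in $M$ that enclose the two spots must intersect. (The twice-punctured ball each one bounds must contain both $p_1$ and $p_2$, forcing overlap.) This makes your base case vacuous---if $S_0,S_1$ are disjoint enclosing spheres then $S_0=S_1$ and $b(S_0,S_1)$ is trivial---but far more damagingly, it destroys the endgame of your argument. A path in ${\cal S\cal G}$ all of whose vertices enclose the two spots is necessarily constant, so ``applying the easier inductive step to this new path, where every sphere encloses the two spots'' yields nothing. Relatedly, your insertion step (one extra enclosing sphere per non-enclosing vertex, at cost $+1$) cannot work as stated: you cannot place an enclosing sphere between two consecutive enclosing spheres, and between two non-separating spheres whose complement separates the spots you may need more than one insertion.

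What the paper does instead is precisely to exploit the \emph{non}-enclosing spheres rather than eliminate them. After a preliminary clean-up, one builds a path $(U_k)_{0\leq k\leq 2u}$ with $u\leq 2n$ that \emph{alternates}: $U_{2j}$ encloses the spots and $U_{2j+1}$ is non-separating. The key input is then algebraic: the increment $w_j=b(S_0,U_{2j})^{-1}b(S_0,U_{2j+2})$ is represented by a loop at $p_2$ disjoint from the non-separating sphere $U_{2j+1}$, hence lies in a proper free factor of $\mathfrak{F}_g$. By Stallings' criterion its Whitehead graph has a cut vertex, so $\vert w_j\vert_{g+1}^{simple}=0$. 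Now Lemma~\ref{sabalka}(2) applied $u$ times to words of simple length~$0$ gives $\vert w\vert_{g+1}^{simple}\leq u-1\leq 2n$. Note that this also explains why your inductive bookkeeping would fail even if the topology cooperated: splitting $b(S_0,S_n)=b(S_0,S_i)\cdot b(S_i,S_n)$ with bounds $2i$ and $2(n-i)$ and adding $+1$ gives $2n+1$, not $2n$; the paper avoids this by arranging each factor to have simple length~$0$ rather than merely bounded.
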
 
\begin{proof} Assume without loss of generality that
the path $(S_i)$ connecting $S_0$ to $S_n$ is 
of minimal length in ${\cal S\cal G}$. 
First we modify inductively the sequence $(S_i)$ without
increasing its length in such a way that each of the spheres
$S_i$ $(1\leq i\leq n-1)$ either is non-separating
or encloses the spots $p_1,p_2$.

The construction proceeds in two steps. 
In a first step, we replace each separating sphere $S_{2i-1}$ with 
odd index by a sphere which either is non-separating or
encloses the two spots. We do not change the spheres 
$S_{2i}$ with even index.
In a second step, we then modify the spheres with even index
and preserve those with odd index.

To carry out the first step, 
let $\ell\leq n/2$ and assume that the sphere
$S_{2\ell-1}$ is separating 
and does not enclose the spots; otherwise there is
nothing to do.
If $S_{2\ell-2},S_{2\ell}$  are contained in 
distinct components of $M-S_{2\ell-1}$ then they are 
disjoint. In this case we can remove $S_{2\ell-1}$ 
from the path $(S_i)$ and obtain a shorter path with the same
endpoints. Since the path $(S_i)$ has minimal length
this is impossible. 

Thus $S_{2\ell-2},S_{2\ell}$ are contained
in the same component $W$ of $M-S_{2\ell-1}$. Since 
$S_{2\ell-1}$ does not enclose the spots, 
neither of the two components of $M-S_{2\ell-1}$ 
is a ball with two balls (or points) removed from the interior. 
Since $M$ has precisely two spots, this implies that 
the image of the fundamental group of 
each of the two components of 
$M-S_{2\ell-1}$ in the fundamental group of $M$ is non-trivial.
Now each component of 
$M-S_{2\ell-1}$ is a connected sum of $S^1\times S^2$ with 
some balls removed and therefore 
the component $M-W$ 
of $M-S_{2\ell-1}$ contains a non-separating sphere
$\tilde S_{2\ell-1}$. 
Replace $S_{2\ell-1}$ by $\tilde S_{2\ell-1}$.

Replace in this way any sphere
$S_{2\ell-1}$ with an odd index which is separating but
does not enclose $p_1,p_2$  
by a non-separating sphere
without modifying the spheres $S_{2i}$ 
with even index. This implements the first step of 
the construction. The second step is exactly identical 
after exchanging the roles 
of even and odd index.
To summarize, we may assume from now on that 
every separating sphere in 
the path $(S_i)$  
encloses the two spots.

From the path $(S_i)$ we next construct a path 
$(U_k)_{0\leq k\leq 2u}$ 
of spheres connecting $S_0$ to $S_n$ 
whose length $2u$ is at most four times the 
length $n$ of the path $(S_i)$ and such that  
for each 
$j\leq u$, the sphere $U_{2j}$ encloses the spots $p_1,p_2$
and the sphere $U_{2j-1}$ is non-separating.

To this end recall that any two distinct spheres which 
enclose the spots $p_1,p_2$ intersect. This means that
if the sphere $S_i$ from the above sequence
encloses the spots then the spheres
$S_{i-1},S_{i+1}$ are non-separating. 
Thus for the construction of the
path $(U_k)$ it now suffices to replace any consecutive pair $S_i,
S_{i+1}$ of disjoint non-separating spheres 
by a path of length at most four with the same endpoints
whose vertices alternate between non-separating spheres and
spheres enclosing 
the spots.

Let $i<n-1$ be such that the 
spheres $S_i,S_{i+1}$ are both
non-separating. If $M-(S_i\cup S_{i+1})$ is connected then 
there is a sphere $B$ which 
encloses the spots $p_1,p_2$ and which is disjoint from 
$S_i\cup S_{i+1}$. Such a sphere can be obtained by thickening
an  arc in $M-(S_i\cup S_{i+1})$
which connects $p_1$ to $p_2$. 
Replace the consecutive pair $S_i,S_{i+1}$ by
the path $S_i,B,S_{i+1}$ of length two.

If $M-(S_i\cup S_{i+1})$ is disconnected and if the spots 
$p_1,p_2$ are both contained in the same component of 
$M-(S_i\cup S_{i+1})$, then we can proceed as in the previous
paragraph. Otherwise there is a component of 
$M-(S_i\cup S_{i+1})$ which is a connected sum of 
$h\geq 1$ copies of $S^1\times S^2$ with three  
points or open balls removed. 
One of the holes
is a spot, the other two holes are bounded by spheres 
which glue to 
the spheres $S_i,S_{i+1}$.
Hence  there is a non-separating
sphere $B$ which is disjoint from 
$S_i\cup S_{i+1}$ and such that $M-(S_i\cup B)$ and 
$M-(S_{i+1}\cup B)$ are both connected. 
Replace the consecutive pair $S_i,S_{i+1}$ by a path 
of length $4$ of the form $S_i,A_1,B,A_2,S_{i+1}$ 
where the spheres $A_1,A_2$ both enclose
the spots $p_1,p_2$. This completes the construction 
of the sequence $(U_k)$.

For each $j$ the sphere $U_{2j}$ defines a free splitting
of $\mathfrak{F}_{g+1}$ of the form $\mathfrak{F}_g*\mathbb{Z}$. 
If $a$ is the generator of the free factor $\mathbb{Z}$ for 
the splitting defined by $S_0$ (in the sense as before, namely we 
think of $a$ as a homotopy class of an arc in 
$M$ which connects $p_1$ to $p_2$ and does not intersect the sphere
$S_0$, and this homotopy class determines the free
factor $\mathbb{Z}$ in the free splitting defined by $S_0$), 
then for each $j$ 
the free factor $\mathbb{Z}$ for the free splitting defined
by $U_{2j}$ is generated by $a\cdot b(S_0,U_{2j})$
where $b(S_0,U_{2j})\in \mathfrak{F}_g$. 

Let $w_j=b(S_0,U_{2j})^{-1} b(S_0,U_{2j+2})\in \mathfrak{F}_g$. 
By construction, the sphere $U_{2j+1}$ in $H$ is non-separating
and disjoint from $U_{2j},U_{2j+2}$. 
The set of all loops in $M\cup \{ p_2\}$ with basepoint $p_2$ 
which do not intersect $U_{2j+1}$ defines a free factor 
$Q$ of $\mathfrak{F}_g$ of corank one. 
Since $U_{2j+1}$ is disjoint
from $U_{2j}$ and $U_{2j+2}$, the element $w_j$ is contained in
the free factor $Q$.

Since $w_j\in Q$, 
by Theorem 2.4 of \cite{S00} the Whitehead graph of 
$w_j$ has a cut vertex. 
But this just means that the simple $g+1$-length of $w_j$ 
vanishes. An inductive application of 
the second part of Lemma \ref{sabalka} now shows that 
the simple $g+1$-length of the word 
$b(S_0,S_n)\in \mathfrak{F}_g$ is at most $u\leq 2n$.
This is what we wanted to show.
\end{proof}

Now we are ready to show Theorem \ref{diskgraph}
from the introduction.

\begin{theorem}\label{infiniteasymp}
For every $g\geq 4$ and every $n\geq 1$,  
the sphere graph of a connected sum 
$\sharp_g S^1\times S^2$ with two spots 
contains quasi-isometrically
embedded copies of $\mathbb{R}^n$.
\end{theorem}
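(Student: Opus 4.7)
The plan is to import the combinatorial construction of words with large simple $(g+1)$-length from Section 4 of \cite{SS14} and combine it with the point-pushing parametrization of spheres enclosing the two spots developed in Sections 2 and 3. Fix an $I$-bundle generator $c \subset \partial H_0$, a disk $D \in {\cal R\cal D}^+(c)$ enclosing the two spots, and a free basis ${\cal A} = \{a_1, \dots, a_g\}$ of $\mathfrak{F}_g = \pi_1(H \cup \{p_2\}, p_2)$; by Lemma \ref{disksplit}, $D$ determines the splitting $\mathfrak{F}_{g+1} = \mathfrak{F}_g \ast \langle a \rangle$. The first step is to invoke the construction of \cite{SS14} to produce, for every $n\geq 1$, reduced words $u_1, \dots, u_n \in \mathfrak{F}_g$ such that for all $\vec{k}, \vec{\ell} \in \mathbb{Z}^n$ the freely reduced form of
\[ w_{\vec{k},\vec{\ell}} \;=\; (u_1^{k_1}\cdots u_n^{k_n})^{-1}(u_1^{\ell_1}\cdots u_n^{\ell_n}) \]
has simple $(g+1)$-length at least $\kappa\|\vec{k}-\vec{\ell}\|_1$ for a fixed constant $\kappa>0$.

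Set $D_{\vec{k}} = [a, u_1^{k_1}\cdots u_n^{k_n}]_2$ and $S_{\vec{k}} = \Pi(D_{\vec{k}})$ for $\vec{k}\in \mathbb{Z}^n$. I claim that $\vec{k} \mapsto S_{\vec{k}}$ is a quasi-isometric embedding of $(\mathbb{Z}^n, \|\cdot\|_1)$, hence of $\mathbb{R}^n$, into ${\cal S\cal G}$. For the upper bound, I would traverse from $\vec{k}$ to $\vec{\ell}$ one coordinate at a time: a single step $k_i \to k_i \pm 1$ is realised by one further point pushing of $p_2$ along a conjugate of $u_i^{\pm 1}$, which moves the disk in ${\cal D\cal G}$ by a number of edges bounded in terms of the fixed $u_i$. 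Since the doubling map $\Pi$ is one-Lipschitz, summing gives
\[ d_{\cal S\cal G}(S_{\vec{k}}, S_{\vec{\ell}}) \leq C\|\vec{k}-\vec{\ell}\|_1. \]

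For the lower bound, let $(S_i)_{0\leq i\leq N}$ be a geodesic in ${\cal S\cal G}$ from $S_{\vec{k}}$ to $S_{\vec{\ell}}$. Both endpoints enclose the two spots, and by Lemma \ref{disksplit} together with Remark \ref{notinjective} the element $b(S_{\vec{k}}, S_{\vec{\ell}}) \in \pi_1(M, p_2)$ coincides with $w_{\vec{k},\vec{\ell}}$. Lemma \ref{simplelength} then yields
\[ \kappa\|\vec{k}-\vec{\ell}\|_1 \;\leq\; \vert w_{\vec{k},\vec{\ell}}\vert^{simple}_{g+1} \;\leq\; 2N, \]
so that $N \geq (\kappa/2)\|\vec{k}-\vec{\ell}\|_1$, which completes the argument.

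The main obstacle is the initial step: one must arrange that the freely reduced forms of \emph{all} differences $w_{\vec{k},\vec{\ell}}$ retain simple $(g+1)$-length proportional to $\|\vec{k}-\vec{\ell}\|_1$ despite cancellations both within and between the blocks $u_i^{k_i}$. Engineering this robustness is precisely the combinatorial content of \cite{SS14}, built on the Whitehead-graph cut-vertex criterion together with the subword-monotonicity and subadditivity of simple length recorded in Lemma \ref{sabalka}; once that input is granted, the rest of the proof assembles cleanly from the machinery already developed in Sections 2 and 3.
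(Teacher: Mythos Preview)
Your lower bound is exactly right and matches the paper: Lemma~\ref{simplelength} together with the simple-length estimates from \cite{SS14} gives $d_{\cal S\cal G}(S_{\vec k},S_{\vec\ell})\geq \tfrac{\kappa}{2}\|\vec k-\vec\ell\|_1$.

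The upper bound, however, has a real gap. You assert that a single step $k_i\to k_i\pm 1$ ``moves the disk in ${\cal D\cal G}$ by a number of edges bounded in terms of the fixed $u_i$''. After stripping the common suffix $u_{i+1}^{k_{i+1}}\cdots u_n^{k_n}$ using that point pushing $p_2$ is an isometry, this distance becomes
\[
d_{\cal D\cal G}\bigl(E_{\vec k},\,\psi_{u_i}(E_{\vec k})\bigr),\qquad E_{\vec k}=\psi_{u_1^{k_1}\cdots u_i^{k_i}}(D),
\]
i.e.\ the displacement of $E_{\vec k}$ under the \emph{fixed} isometry $\psi_{u_i}$. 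There is no reason for this to be bounded independently of $k_1,\dots,k_i$: by Lemma~\ref{pointpush2} it is coarsely the arc-graph displacement of the arc of $E_{\vec k}$ under $\psi_{u_i}$, and since the $\psi_{u_j}$ act as independent loxodromic isometries on the (hyperbolic) arc graph of $F_0^+$, this displacement grows with the prefix. In particular your argument, if it worked, would embed $\mathbb{Z}^n$ into ${\cal D\cal G}$ for all $n$, which is stronger than what the paper proves for disk graphs.

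What makes the upper bound go through in ${\cal S\cal G}$ is precisely the mechanism you did not use: Lemma~\ref{projectrd} shows that in the sphere graph, point pushing $p_2$ along $u$ is within distance~$2$ of point pushing $p_1$ along $\Psi(u^{-1})$, which yields the estimate~(\ref{crucial2}). This gives a coarse \emph{left} action by isometries on the labels, so one can also strip the prefix $u_1^{k_1}\cdots u_{i-1}^{k_{i-1}}$ and reduce to the uniform bound $d_{\widehat{\cal D\cal G}}(D,\psi_{u_i^{\pm1}}(D))$. The paper carries this out explicitly (the inductive ``change of basepoint'' argument leading to~(\ref{lower})); it also relies on choosing the generators $a_j$ as specific loops in the $I$-bundle base $F$ so that each $b_t$ decomposes into short subwords supported away from a fixed arc, giving $d_{\cal D\cal G}(D,\psi_{b_t^k}(D))\leq 6k$. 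Both ingredients are essential and are missing from your sketch.
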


Before we provide the proof, we outline the strategy for the argument
which is adapted from \cite{SS14}. 
View as before the manifold $M$ as the double of a handlebody $H$, and 
represent the handlebody $H$ as an $I$-bundle over a compact surface
$F$ with boundary. A sphere $E$ in $M$ 
 enclosing the two spots is obtained from 
a base sphere $S$ enclosing the two spots by point pushing the spot
$p_2$ along a loop in $F$, based at $p_2$.

Point pushing induces an isometry of the sphere graph, which yields that for any 
homotopy classes $u,b_1,b_2\in \pi_1(F,p_2)$ we have
\[d_{\cal S\cal G}([a,b_1]_2,[a,b_2]_2)=
d_{\cal S\cal G}([a,b_1 \cdot u]_2,[a,b_2\cdot u]_2)\]
(recall that we write concatenation from the left to the right). On the 
other hand, the estimate (\ref{crucial2}) shows that a similar
relation holds for \emph{precomposition} with a fixed homotopy class
of a based loop in $(F,p_2)$, which distinguishes the sphere graph of $M$
from the disk graph of the handlebody $H$.
This allows to estimate from above 
the distance in ${\cal S\cal G}$ of two spheres enclosing the two spots
which are obtained by point pushing the base sphere enclosing the two spots 
along suitably chosen loops. 
A lower bound for such distances was established in Lemma \ref{simplelength}.
An explicit construction of point pushing loops, taken from 
\cite{SS14},  whose effect on the base sphere 
can be controlled using this mechanism
leads to the proof of the theorem.

\begin{proof}[Proof of Theorem \ref{infiniteasymp}]
 As before, we view $M$ as the double of the handlebody 
$H$. Assume first that the genus $g=2h$ of $H$ is even.
We will explain at the end of this proof how to adjust the argument
to the case that $g$ is odd.

Let $F\subset \partial H_0$ be an embedded oriented surface with 
connected boundary $\partial F$ such that $H_0$ equals the
$I$-bundle over $F$. Let $\Psi$ be the orientation reversing involution of 
$H_0$ which exchanges the endpoints of the intervals which make up the interval
bundle. 
Fix a point $p_2\in \partial F$ and let $p_1=\Psi(p_2)$.

Arrange the $h=g/2\geq 2$ handles of the surface $F$ 
cyclically around $\partial F$. 
Choose for each handle of $F$ two oriented disjoint non-homotopic 
essential arcs in the handle 
with endpoints on $\partial F$. 
We may assume that $\partial F$ is partitioned into $h$ segments 
$I_1,\dots, I_h$ with disjoint interior, ordered cyclically along $\partial F$ (that is,  
if $h\geq 3$ then $I_j\cap I_{j+1}$ consists of a single point for all $j$)
so that each of these segments $I_j$ is contained in the boundary of 
one of the handles and 
contains all four endpoints of 
the arcs $\hat a_{2j-1}, \hat a_{2j}$ which are embedded in that 
handle. The figure shows how this can be done.
\begin{figure}
\begin{center}
\includegraphics[width=0.6\textwidth]{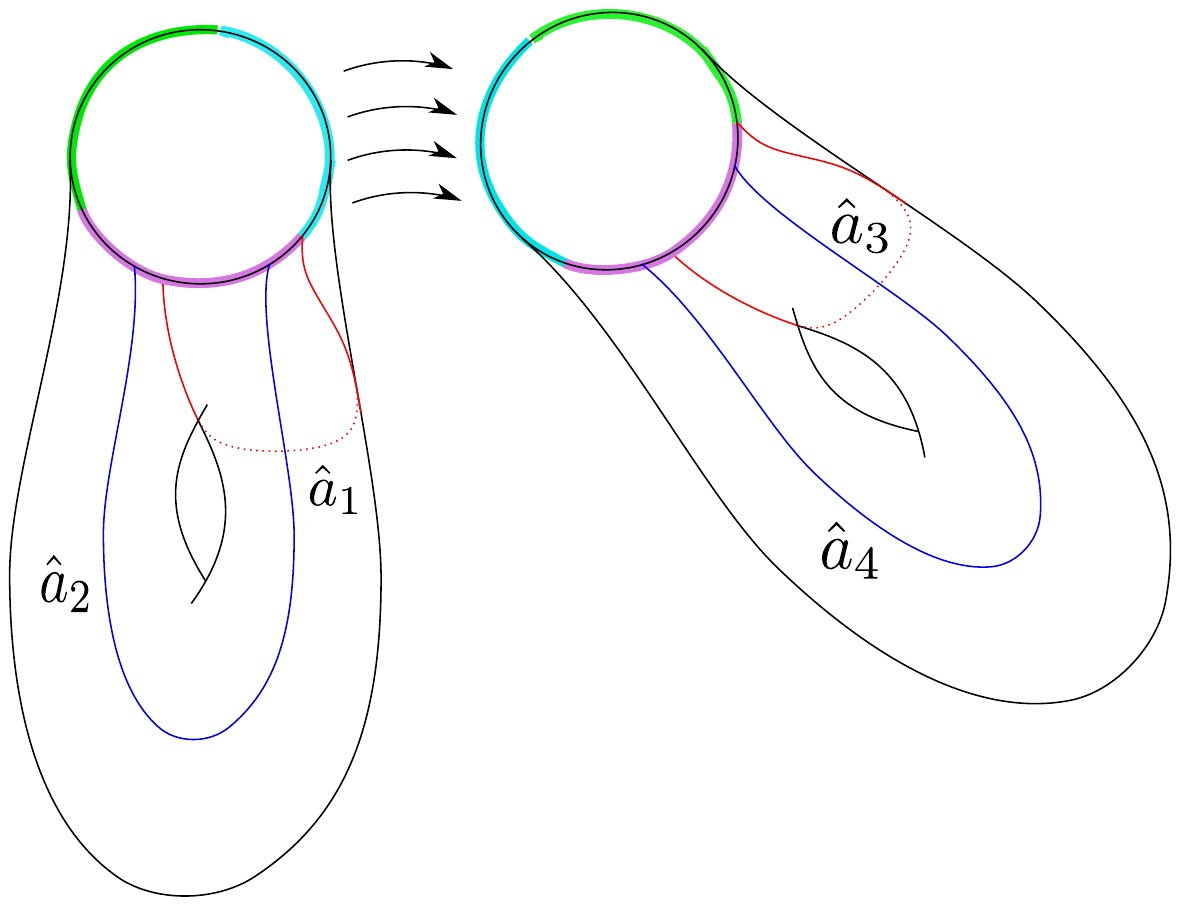}
\end{center}
\end{figure} 

A small neighborhood of the union of these $2h$ arcs 
and the boundary of $F$ is a ribbon graph, 
that is, a planar surface $F_0\subset F$. We require that the inclusion $F_0\to F$ induces a surjection on
fundamental groups. This is equivalent to stating that 
$F$ can be obtained from $F_0$ by attaching a disk
to each component of the boundary of $F_0$ distinct from $\partial F$. 

If $h\geq 3$ then let 
$p_2$ be the intersection
$I_{h}\cap I_1$ and let $x=I_{h-1}\cap I_h$. If $h=2$ then we require that
$\{p_2,x\}=I_1\cap I_2$.
Slide the endpoints of the arcs $\hat a_i$ which define the ribbon graph $F_0$ 
along $\partial F$ to $p_2$ in such a way that this sliding operation 
does not cross through $x$. The image of each of the arcs $\hat a_i$ under this
homotopy is a based oriented loop $a_i$ at $p_2$. The union of these loops is 
an embedded rose $R$ with vertex $p_2$ (the rose $R$ does not contain the boundary circle of $F$).
As $H_0$ is an $I$-bundle over $F$, the inclusion 
$R\to H\cup \{p_2\}$ induces an isomorphism of $Q=\pi_1(R,p_2)$ 
onto the group 
$\pi_1(H\cup \{p_2\},p_2)$ which is isomorphic to the fundamental group of $H$.
Thus if we write $H_2=H\cup \{p_2\}$ then 
we have $\pi_1(H_2,p_2)=Q$. 
In the sequel we think of the based loops $a_i$ $(i=1,\dots,2h)$ as generators of the 
fundamental group $Q$ of $R$. 

As on p.592 in  
Subsection 5.2 of \cite{SS14}, we consider for $t\geq 1$ the element
\[b_t=a_1^{t+1}a_2^{t+1}\cdots a_g^{t+1}a_1^{t+1}a_2^{t+1}a_1^{t+1}\in Q.\]
Let $D$ be the disk in $H$ enclosing the two spots which is a thickening
of the fiber of the interval bundle with endpoints $p_1,p_2$.
We claim that for every $t\geq 1$ 
the image of $D$ under the point 
pushing map of $p_2$ along $b_t$ has distance at most 6 to $D$ in the 
disk graph ${\cal D\cal G}$ of $H$.

We show the claim first in the case that the genus $g=2h$ of $H$ is at least 
$6$ and hence the genus of $F$ is at least three. Then $b_t=uv$ where 
$u=a_1^{t+1}\cdots a_{g-2}^{t+1}$ 
and $v=a_{g-1}^{t+1}a_g^{t+1}a_1^{t+1}a_2^{t+1}a_1^{t+1}$.  
The word $u$ 
does not contain the letters $a_{g-1},a_{g-1}^{-1},a_g,a_g^{-1}$,
and the word $v$ does not contain
the letters $a_{g-3},a_{g-3}^{-1},a_{g-2},a_{g-2}^{-1}$ since $g-3\geq 3$. 

As a consequence, the word $u$ is represented by a loop
in the rose $R$ whose image in the ribbon graph $F_0$ is disjoint from 
the arcs with endpoints in $I_{h}$. 
Hence up to homotopy, this loop is disjoint from the $I$-bundle over 
each of these two arcs. Then the same holds true 
for the image 
$\psi_u(D)$ of the disk $D$ under the point pushing 
map $\psi_u$ along $u$. 
In particular, the distance between $D$ and $\psi_u(D)$ in 
the disk graph ${\cal D\cal G}$ is at most two
(see Lemma \ref{pointpush2}). 
Similarly, the image $\psi_v(D)$ of $D$ 
under the point pushing
map $\psi_v$ along $v$ 
is disjoint from an $I$-bundle over an arc 
with endpoints in the interval $I_{h-1}$
and hence 
 $d_{\cal D\cal G}(D,\psi_v(D))\leq 2$.
But the point pushing map $\psi_v$ acts on the disk graph 
as a simplicial isometry and consequently 
$d_{\cal D\cal G}(\psi_v(D),\psi_v(\psi_u(D)))\leq 2$. 
As $b_t=uv$, together with the triangle inequality  this yields 
\[d_{\cal D\cal G}(D,\psi_{b_t}(D))\leq 4\]
(here words are read from left to right). 

If $g=4$ then write $b_t=uvw$ where $u=a_1^{t+1}a_2^{t+1}$, 
where $v=a_3^{t+1}a_4^{t+1}$ 
and $w=a_1^{t+1}a_2^{t+1}a_1^{t+1}$.
Then there is a loop in $R$ representing $u,v,w$ 
which is disjoint from an arc with endpoints  in $I_2$, $I_1$, $I_2$.   
As in the previous paragraph,  
we conclude that $d_{\cal D\cal G}(D,\psi_s(D))\leq 2$
for $s=u,v,w$. Thus by the triangle inequality, we have 
$d_{\cal C\cal G}(D,\psi_{b_t}(D))\leq 6$. 

This argument can be used inductively and shows the following.
For all $t\geq 1$ and each $k\geq 1$, we have
\begin{equation}\label{distanceestimate}
d_{\cal D\cal G}(D,\psi_{b_t^k}(D))\leq 6k.\end{equation}

Recall that the disk $D$ defines a 
free splitting $\pi_1(H^\prime,p)=\mathfrak{F}_g*\mathbb{Z}$
where as before, $p\in \partial H^\prime$
is the point obtained 
by identification of $p_1$ and $p_2$. 
Let $a$ be the generator of the infinite cyclic group $\mathbb{Z}$,
defined by the homotopy class of 
the arc $\alpha$ in $\partial H$ connecting $p_2$ to $p_1$ which is disjoint from
the boundary of $D$. As explained
in the discussion preceding Remark \ref{notinjective}, 
if $u\in Q$ is arbitrary, then the image of $D$ under the
point-pushing map $\psi_u$ is a disk $\psi_u(D)$ 
enclosing the two spots which 
defines the free splitting
of $\mathfrak{F}_{g+1}$ 
where the infinite cyclic free factor in the splitting 
is generated by $a\cdot \iota_*q(D,\psi_u(D))$. 
By the definition of the point pushing 
map, if we identify $Q$ with $\pi_1(H,p_2)=
\mathfrak{F}_g<\pi_1(H^\prime,p)$ as
described in the beginning of this proof, the generator of 
this infinite cyclic free
factor is just the element $au$. We refer to the discussion before
Lemma \ref{disksplit} for more details.

Using the
above notations, we follow  Section 5.2 of 
\cite{SS14}.  For an arbitrary integer $n\geq 1$, define  
a map $\Lambda: \mathbb{Z}^n\to {\cal D\cal G}$ 
which associates to $(k_1,\dots, k_n)\in \mathbb{Z}^n$
the image of the disk $D$ under point-pushing of $p_2$  
along the loop 
$b_1^{k_1}b_2^{k_2}\cdots b_n^{k_n}\in Q$
based at $p_2$.
We claim that 
\begin{equation}\label{lower}
d_{\widehat{\cal D\cal G}}(\Lambda(k_1,\dots,k_n),\Lambda(\ell_1,\dots,\ell_n))\leq 
6 \sum_{i=1}^n (\vert k_i-\ell_i\vert +8).\end{equation}

To see this we adapt an argument from  
p.594 of \cite{SS14}. Our goal is to transform the disk
$\Lambda(k_1,\dots,k_n)=\psi_{b_1^{k_1}\cdots b_n^{k_n}}(D)$ to the disk 
$\Lambda(\ell_1,\dots,\ell_n)=\psi_{b_1^{\ell_1}\cdots b_n^{\ell_n}}(D)$ 
in a controlled way. These 
disks are determined by the homotopy classes 
$b_1^{k_1}b_2^{k_2}\cdots b_n^{k_n}\in Q$ 
and $b_1^{\ell_1}b_2^{\ell_2}\cdots b_n^{\ell_n}\in Q$, respectively, 
provided that the 
base disk $D$ is fixed. To take full advantage of this
fact we will now consider pairs of disks $(E,V)$ where we
view $V$ as a basepoint, and $E$ as a modification of the 
basepoint. With this viewpoint, our goal will be to transform  
the pair $(\psi_{b_1^{k_1}\cdots b_n^{k_n}}(D),D)$
to the pair $(\psi_{b_1^{\ell_1}\cdots b_n^{\ell_n}}(D),D)$
in a way which enables us to estimate the function 
$d_{\widehat{\cal D\cal G}}$. 

To simplify the discussion, let us resume 
the following notation.
For an element $u\in Q$, represented up to homotopy by a unique
reduced edge path in the rose $R$, let us denote by
$[a \cdot u]_2$ the disk $\psi_u(D)$ obtained from $D$ by point pushing $p_2$ 
along $u$, and denote by $[a^{-1}\cdot u]_1$ the disk obtained from 
$D$ by point pushing $p_1=\Psi(p_2)$ along $\Psi(u)$. 
Inequality (\ref{pointcommute}) 
shows that 
\begin{equation}\label{invert}
d_{\widehat{\cal D\cal G}}([a,u]_2,[a^{-1},u^{-1}]_1)\leq 2.\end{equation}

We first claim that 
\begin{equation}\label{step1}
d_{\widehat{\cal D\cal G}}(\psi_{b_1^{k_1}\cdots b_{n-1}^{k_{n-1}}b_n^{k_n}}(D),
\psi_{b_1^{k_1}\cdots b_{n-1}^{k_{n-1}}b_n^{\ell_n}}(D))\leq 
6\vert \ell_{n}-k_{n}\vert +8.\end{equation}

Namely, the estimate (\ref{distanceestimate})
and the fact that point-pushing of $p_2$ induces an isometry for $d_{\widehat{\cal D\cal G}}$ on
${\cal D\cal G}$ imply that
\[d_{\widehat{\cal D\cal G}}(\psi_{b_n^{k_n}}(D),\psi_{b_n^{\ell_n}}(D))\leq 6\vert \ell_n-k_n\vert.\]

We now use the inequality (\ref{crucial2}). 
As $\psi_{b_n^{u}}(D)=[a\cdot b_n^u]_2$ for all $u$, 
the estimate (\ref{invert}) shows that 
\[d_{\widehat{\cal D\cal G}}([a^{-1}\cdot b_n^{-k_n}]_1,[a^{-1}\cdot b_n^{-\ell_n}]_1)
  \leq 6\vert k_n-\ell_n\vert +4.\]
Apply to both disks $[a^{-1}\cdot b_n^{-k_n}]_1,[a^{-1}\cdot b_n^{-\ell_n}]_1$
point-pushing
of the point $p_1$ along a loop based at $p_1$ 
representing the homotopy class
$\Psi(b_{n-1}^{-k_{n-1}}\cdots b_1^{-k_1})$.
As point-pushing induces an isometry   
on the disk graph (and composition is read from left to right), we obtain
\[d_{\widehat{\cal D\cal G}}([a^{-1}\cdot b_n^{-k_n}b_{n-1}^{-k_{n-1}}\cdots b_1^{-k_1}]_1,
[a^{-1}\cdot b_n^{-\ell_n}b_{n-1}^{-k_{n-1}}\cdots  b_1^{-k_1}]_1)\leq 6\vert k_n-\ell_n\vert +4.\]
Using again the estimate (\ref{invert}), this yields the estimate 
(\ref{step1}) we
wanted to show.

Point-pushing of the point $p_2$ along the loop $b_n^{-\ell_n}$ 
transforms the \emph{pair} of disks 
$(\psi_{b_1^{k_1}
  \cdots b_{n-1}^{k_{n-1}}b_n^{\ell_n}}(D),D)$ to the pair
$(\psi_{b_1^{k_1}\cdots b_{n-1}^{k_{n-1}}}D,\psi_{b_n^{-\ell_n}}(D))$. 
As point-pushing acts as an isometry for $d_{\widehat{\cal D\cal G}}$, 
we view this operation as a change of basepoints which does not
change distances.

In a second step, we use the reasoning which led to the estimate
(\ref{step1}) to deduce that
\begin{equation}
  d_{\widehat{\cal D\cal G}}(\psi_{b_1^{k_1}\cdots b_{n-2}^{k_{n-2}}b_{n-1}^{k_{n-1}}}(D),
\psi_{b_1^{k_1}\cdots b_{n-2}^{k_{n-2}}b_{n-1}^{\ell_{n-1}}}(D))\leq 
6\vert \ell_{n-1}-k_{n-1}\vert +8.\notag\end{equation}
As a next step, we change the basepoint again. Using point-pushing
of the point $p_2$ along the loop 
$b_2^{-\ell_{n-1}}$, the pair
$(\psi_{b_1^{k_1}\cdots b_{n-2}^{k_{n-2}}b_{n-1}^{\ell_{n-1}}}(D),\psi_{b_n^{-\ell_n}}D)$
transforms to the pair 
\[(\psi_{b_1^{k_1}\cdots b_{n-2}^{k_{n-2}}}(D),\psi_{b_n^{-\ell_n}b_{n-1}^{-\ell_{n-1}}}D).\] 
Proceeding inductively, in $n$ steps we transform the pair
$(\psi_{b_1^{k_1}b_2^{k_2}\cdots b_n^{k_n}}(D),D)$ to the pair
$(D,\psi_{b_n^{-\ell_n}\cdots b_1^{-\ell_1}}(D))$, 
changing the value of the function $d_{\widehat{\cal D\cal G}}$ 
 by at most $\sum_i(6\vert \ell_i-k_i\vert +8)$.

Now apply one last time point-pushing of the point $p_2$ along the loop  
$b_1^{\ell_1}\cdots b_n^{\ell_n}$ to the pair 
\[(D,\psi_{b_n^{-\ell_n}\cdots b_1^{-\ell_1}}(D))\] and 
obtain the pair 
$(\psi_{b_1^{\ell_1}\cdots b_n^{\ell_n}}(D),D)$.
Using again that point pushing preserves $d_{\widehat{\cal D\cal G}}$, 
we conclude that
\[d_{\cal S\cal G}(\Pi\Lambda(k_1,\dots,k_n),\Pi\Lambda(\ell_1,\dots,\ell_n))\leq 
\sum_i(6\vert \ell_i-k_i\vert +8)\] 
as claimed.

Now Lemma 4.15 of \cite{SS14} 
and the discussion on the bottom of p.592 and on the top of
p.594 in \cite{SS14} shows 
that there is a number $c>0$ such that
\begin{equation}\label{upper1}\sum_{i=1}^n\vert k_i-\ell_i\vert \leq 
c\vert b_n^{-k_n}b_{n-1}^{-k_{n-1}}\cdots 
b_1^{-k_1}b_1^{\ell_1}b_2^{\ell_2}\cdots b_n^{\ell_n}
\vert_{g+1}^{simple}.\end{equation}

We give a short summary of the proof of this fact as 
found in \cite{SS14}. 
Namely, 
following Definition 4.9 of \cite{SS14}, we say that 
a word $w$ in the letters ${\cal A}\cup {\cal A}^{-1}$ has 
\emph{conjugate reduced length at most $k$} if there exist freely reduced
words $v_1,\dots,v_\ell,u_1,\dots,u_\ell$ such that.
\begin{enumerate}
\item[(a)] $w=v_1^{u_1}v_2^{u_2}\cdots v_\ell^{u_\ell}$, where $v_j^{u_j}=u_j^{-1}v_ju_j$, and
\item[(b)] $k=(\ell-1)+\vert v_1\vert_{g+1}^{simple}+\cdots +\vert v_\ell\vert_{g+1}^{simple}$.
\end{enumerate}
The number $k$ is called the \emph{conjugate reduced 
$g+1$-length associated to the
decomposition}. 
The minimal number $k$ for which such a decomposition exists is called  
the \emph{conjugate reduced length of $w$}, and it is denoted by
$\vert w\vert^{cr}$. 

The easy Lemma 4.15 of \cite{SS14} states that $\vert w\vert_{g+1}^{simple}\geq 
\vert w\vert^{cr}$, so it suffices to estimate $\vert w\vert^{cr}$ from below
for 
$w=  b_n^{-k_n}b_{n-1}^{-k_{n-1}}\cdots 
b_1^{-k_1}b_1^{\ell_1}b_2^{\ell_2}\cdots b_n^{\ell_n}$. 

Definition 4.10 of \cite{SS14} is geared to this end. A \emph{cancelling pair}
in the reduced word $w$ is a pair of subwords of the form $u,u^{-1}$. 
A \emph{nested family ${\cal F}$ 
of cancelling pairs} is a finite collection of disjoint cancelling pairs so that
if $v,v^{-1}\in {\cal F}$ and $u,u^{-1}\in {\cal F}$ then $v$ occurs between
$u,u^{-1}$ if and only if this is true for $v^{-1}$. For such a family ${\cal F}$ 
of cancelling pairs let $w-{\cal F}$ be the finite collection of subwords of 
$w$ obtained by erasing the words from ${\cal F}$. Define
\[ \vert w-{\cal F}\vert_{g+1}^{simple}=\vert {\cal F}\vert +
\sum_{w^\prime \in w-{\cal F}}\vert w^\prime \vert_{g+1}^{simple}.\]

 The required estimate follows from Lemma 4.11 of \cite{SS14} which states that
 \begin{equation}\label{lemma411}
 \vert w\vert^{cr}\geq \min_{\cal F}(\max \bigl\{ \frac{\vert {\cal F}\vert }{2}-1,
 \frac{1}{5}\vert w-{\cal F}\vert_{g+1}^{simple}-3\bigr\}).\end{equation}
  
To apply this estimate to the above word 
$w$, let ${\cal F}$ be a nested family of cancelling pairs for $w$ which minimizes
the expression on the right hand side of equation (\ref{lemma411}) and write
$d=\sum\vert k_i-\ell_i\vert$. 
If $\vert {\cal F}\vert \geq d/10$ then we immediately obtain the required estimate.
Otherwise note that by removing a 
cancelling pair we can at most delete a subword 
of a string of the form $b_i^{\min\{k_i,\ell_i\}}$. 
Furthermore it is easy to see that
$\vert b_t^s\vert_{g+1}^{simple}\geq \vert s\vert $ for all $s$. Thus if 
$\vert {\cal F}\vert \leq d/10$ then a rough counting of the simple norm of the
subsegments of $w-{\cal F}$ as carried out in detail on p.593 of \cite{SS14} 
yields again the required estimate.

On the other hand,
by Lemma \ref{simplelength}, we have  
\begin{equation}\label{upper}
d_{\cal S\cal G}(\Pi\Lambda(k_1,\dots,k_n),\Pi\Lambda(\ell_1,\dots,\ell_n))
\geq \frac{1}{2}
 \vert b_n^{-k_n}b_{n-1}^{-k_{n-1}}\cdots 
b_1^{-k_1}b_1^{\ell_1}b_2^{\ell_2}\cdots b_n^{\ell_n}
\vert_{g+1}^{simple}.\end{equation}
The estimates (\ref{lower}), (\ref{upper1}) and (\ref{upper}) together
show that the distance in 
${\cal S\cal G}$ of the images of $\Pi(D)$ under the 
point pushing of $p_2$ along $b_1^{k_1}b_2^{k_2}\cdots b_n^{k_n}$
and by $b_1^{\ell_1}b_2^{\ell_2}\cdots b_n^{\ell_n}$ 
 is bounded from above and below by a fixed positive 
multiple of $\sum_{i=1}^n \vert k_i-\ell_i\vert$.
Thus the map $\Lambda:\mathbb{Z}^n\to {\cal S\cal G}$ is 
a quasi-isometric embedding. 
The theorem in the case that $g$ is even follows,

The argument can be adjusted to the case that $g$ is odd 
as follows.
Let $H_0$ be a handlebody of odd genus $g\geq 5$. Choose a non-separating 
$I$-bundle generator $c$. Then $H_0$ is the oriented $I$-bundle over 
a non-orientable surface $F$ with connected boundary $\partial F=c$. The surface
$F$ can be obtained from an orientable surface $F_0$ of genus $(g-1)/2\geq 2$ 
whose boundary
consists of 2 connected components $c_0,c_1$ by attaching a M\"obius band to $c_1$. 
The orientation cover of $F$ equals the complement in $\partial H_0$ of an open annulus
with core curve $c$, and the preimage of $F_0$ consists of two copies of $F_0$ which are glued
along an annulus. The fundamental group of $F_0$ is a free group in $g$ generators, and the 
inclusion of a component of its preimage in $\partial H_0$ into $H_0$ defines an isomorphism
on fundamental groups.

The argument in the beginning of this proof now applies verbatim using the surface
$F_0$ instead of $F$ and noting that 
we may choose disjoint generating arcs for the fundamental group of $F_0\subset F$ with endpoints on the
boundary of $F$ with the property that there is a partition of $\partial F$ into 
$(g-1)/2+1\geq 2$ disjoint intervals, each containing the endpoints of one or two arcs. This suffices to control
the distance in the disk graph of a disk obtained from the base disk $D$ by point pushing $p_2$ 
along a loop defined by the word $b_t$ in the corresponding generators. The rest of the argument
is identical to the argument for connected sums of an even number $g\geq 4$ of copies of $S^1\times S^2$ 
with two spots. 
\end{proof}

\bigskip\bigskip

\noindent
MATH. INSTITUT DER UNIVERSIT\"AT BONN, ENDENICHER ALLEE 60, 
53115 BONN, GERMANY\\
\bigskip\noindent
e-mail: ursula@math.uni-bonn.de

\end{document}